\documentclass{amsart}
\usepackage{amssymb, amsthm, amsmath, amsfonts,amscd}
\usepackage{graphics}
\usepackage{hyperref}
\usepackage[all]{xy}
\usepackage{enumerate}
\usepackage[mathscr]{eucal}
\usepackage{bbm}
\usepackage{tikz}
\usetikzlibrary{decorations.pathreplacing,angles,quotes}

\providecommand{\U}[1]{\protect\rule{.1in}{.1in}}
\setlength{\topmargin}{-.25in}
\setlength{\textheight}{9.25in}
\setlength{\oddsidemargin}{0.0in}
\setlength{\evensidemargin}{0.0in}
\setlength{\textwidth}{6.5in}

\def\theenumi{\arabic{enumi}}

\def\theenumii{\alph{enumii}}
\def\p@enumii{\theenumi.}

\def\theenumiii{\arabic{enumiii}}
\def\p@enumiii{(\theenumi)(\theenumii)}

\def\p@enumiv{\p@enumiii.\theenumiii}
\parindent=0pt

\theoremstyle{plain}
\newtheorem{theorem}{Theorem}[section]
\newtheorem{lemma}[theorem]{Lemma}
\newtheorem{proposition}[theorem]{Proposition}

\newtheorem{corollary}[theorem]{Corollary}

\numberwithin{equation}{section}

\theoremstyle{definition}

\newtheorem{definition}[theorem]{Definition}

\newtheorem{remark}[theorem]{Remark}

\newtheorem{thmab}{Theorem}

\renewenvironment{proof}[1][\proofname]{{\bfseries #1\\}}{\qed}

\setcounter{tocdepth}{1}

\DeclareMathOperator{\FI}{FI}
\DeclareMathOperator{\OI}{OI}
\DeclareMathOperator{\Mod}{-Mod}

\DeclareMathOperator{\Hom}{Hom}

\DeclareMathOperator{\op}{op}

\DeclareMathOperator{\Conf}{Conf}
\DeclareMathOperator{\DConf}{DConf}
\DeclareMathOperator{\sink}{sink}
\DeclareMathOperator{\rank}{rank}
\DeclareMathOperator{\hdim}{hdim}
\newcommand{\Sn}{\mathfrak{S}}
\newcommand{\as}{\text{*}}

\newcommand{\C}{{\mathcal{C}}}

\newcommand{\Z}{{\mathbb{Z}}}
\newcommand{\N}{\mathbb{N}}
\newcommand{\R}{\mathbb{R}}
\newcommand{\Q}{\mathbb{Q}}
\newcommand{\M}{\mathcal{M}}
\newcommand{\K}{\mathcal{K}}

\newcommand{\dt}{\bullet}
\newcommand{\arXiv}[1]{\href{http://arxiv.org/abs/#1}{\nolinkurl{arXiv:#1}}}
\newcommand{\arXivV}[2]{\href{http://arxiv.org/abs/#1}{\nolinkurl{arXiv:#1v#2}}}

\title{Configuration spaces of graphs with certain permitted collisions}

\author{Eric Ramos}
\address{Department of Mathematics, University of Wisconsin - Madison.}
\email{eramos@math.wisc.edu}

\thanks{The author was supported by NSF grant DMS-1502553.}

\begin{document}

\maketitle

\begin{abstract}
If $G$ is a graph with vertex set $V$, let $\Conf_n^{\sink}(G,V)$ be the space of $n$-tuples of points on $G$, which are only allowed to overlap on elements of $V$. We think of $\Conf_n^{\sink}(G,V)$ as a configuration space of points on $G$, where points are allowed to collide on vertices. In this paper, we attempt to understand these spaces from two separate, but closely related, perspectives. Using techniques of combinatorial topology we compute the fundamental groups and homology groups of $\Conf_n^{\sink}(G,V)$ in the case where $G$ is a tree. Next, we use techniques of asymptotic algebra to prove statements about $\Conf_n^{\sink}(G,V)$, for general graphs $G$, whenever $n$ is sufficiently large. It is proven that, for general graphs, the homology groups exhibit generalized representation stability in the sense of \cite{R2}.
\end{abstract}

\section{Introduction}

\subsection{Introductory remarks}
Let $Y$ be a topological space. The \textbf{$n$-strand configuration space on $Y$} is the space
\[
\Conf_n(Y) := \{(y_1,\ldots,y_n) \in Y^n \mid y_i \neq y_j\}
\]
For much of their history, configuration spaces have been studied in the case wherein $Y$ is a manifold of dimension at least 2. More recently, there have been various efforts made to understand configuration spaces of graphs (see \cite{G}\cite{A}\cite{KP}\cite{BF}\cite{CL}, for instance). In this context, a \textbf{graph} is a connected, compact, CW-complex of dimension 1. If $Y$ is a graph with at least one vertex of degree $\geq 3$, then it is a fact that the spaces $\Conf_n(Y)$ are always connected \cite{A}. In spite of this, there is a sense in which it is ``harder'' for points on a graph to move around each other than it would be in the case of higher dimensional manifolds. This fact manifests itself in many ways in the study of these spaces, and makes certain aspects of the theory much harder than the analogous aspects in the configuration spaces of manifolds.\\

One approach to mitigating this issue is by allowing for certain collisions. For instance, in \cite{Che} Chettih considers configuration spaces wherein you only disallow more than $k$ coordinates from coinciding, for some $k$. In this paper, we also consider configuration spaces of graphs with certain permitted collisions. Unlike the work of Chettih, where points are allowed to collide anywhere on $G$ so long as less than $k$ points are doing so, we consider spaces where points are only allowed to overlap on vertices of the graph. More formally, let $G$ be a graph with vertex set $V$. Then we consider the spaces,
\[
\Conf_n^{\sink}(G,V) := \{(x_1,\ldots,x_n) \in G^n \mid x_i \neq x_j \text{ or } x_i = x_j \in V\}.
\]
We refer to $\Conf_n^{\sink}(G,V)$ as the \textbf{$n$-strand sink configuration space of $G$}. These spaces were first considered by Chettih and L\"utgehetmann in \cite{CL}, although they only look at the cases of a circle, and a line segment. In this work, our goal will be to develop the theory of these spaces for all graphs.\\

\subsection{Topological aspects of $\Conf_n^{\sink}(G,V)$}

We will begin our study of sink configuration spaces from a more traditional topological perspective. In particular, we will initially spend time constructing a cellular model for the spaces $\Conf_n^{\sink}(G,V)$.\\

\begin{thmab}\label{fcm}
Let $G$ be a graph with vertex set $V$ and edge set $E$. Then the sink configuration space $\Conf_n^{\sink}(G,V)$ is homotopy equivalent to a CW complex of dimension at most $|E|$. If $G$ is a tree, then this CW complex is a cubical complex.\\
\end{thmab}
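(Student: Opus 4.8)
The plan is to realize $\Conf_n^{\sink}(G,V)$, up to homotopy, by a finite cell complex in the spirit of Abrams' discretized configuration spaces, while exploiting the feature that collisions are permitted at vertices. Concretely, I would introduce the \emph{discretized sink configuration space} $\DConf_n^{\sink}(G,V) \subseteq \Conf_n^{\sink}(G,V)$, defined as the subspace of those tuples $(x_1,\dots,x_n)$ in which the open interior of each edge of $G$ carries at most one coordinate. Both assertions of the theorem are then read off from the cell structure of this model, while the homotopy equivalence is supplied by an explicit deformation retraction $\Conf_n^{\sink}(G,V) \to \DConf_n^{\sink}(G,V)$.

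I would describe the cells of $\DConf_n^{\sink}(G,V)$ as follows. A closed cell is specified by a ``moving'' subset $S \subseteq \{1,\dots,n\}$ together with an injection $\phi \colon S \hookrightarrow E$, and an arbitrary (not necessarily injective) ``resting'' assignment $r \colon \{1,\dots,n\}\setminus S \to V$; the cell itself is the product $\prod_{i \in S} \overline{\phi(i)} \cong [0,1]^{|S|}$ of the closed edges carrying the moving points, with the resting points frozen at their assigned vertices. Since $\phi$ is injective, each edge supports at most one moving coordinate, so the dimension of the cell is $|S| \le |E|$, which is the dimension bound. The codimension-one faces arise by driving a single moving point to one endpoint of its edge, where it becomes a resting point; this exhibits every cell as a cube and records the face relations. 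When $G$ is a tree these gluings assemble into a genuine cube complex, because the absence of cycles prevents a cube from being attached to itself or from meeting another cube in more than a single face; for a general graph a loop or cycle produces exactly such self-identifications (a point circulating on a loop at a vertex $v$ yields a $1$-cube both of whose endpoints are ``the point at $v$''), so one retains only a CW structure.

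For the homotopy equivalence I would construct a deformation retraction that ``evacuates'' surplus coordinates off the interiors of edges. The simplification afforded by the sink condition is that the evacuation may be carried out one edge at a time and independently: on each edge, with interior coordinates ordered $0 < t_1 < \dots < t_k < 1$, I flow the extreme coordinates outward, decreasing $t_1$ toward the endpoint $0$ and increasing $t_k$ toward the endpoint $1$ at matched speeds, parking each point at its endpoint vertex as it arrives and iterating until at most one interior coordinate survives. Points parked at a common vertex simply pile up there, which is allowed, so the flows on distinct edges never interfere and glue to a global homotopy $\Conf_n^{\sink}(G,V) \times [0,1] \to \Conf_n^{\sink}(G,V)$ whose time-one image is precisely $\DConf_n^{\sink}(G,V)$ and which is the identity there throughout. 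In contrast to the collision-free setting, no subdivision of $G$ is needed, since resting points are absorbed into the existing vertices rather than requiring new ones.

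The step I expect to demand the most care is verifying that this edge-wise evacuation is continuous as a map on all of $\Conf_n^{\sink}(G,V)$ and that it is genuinely a deformation retraction. The delicacy is concentrated at the strata where the combinatorial type changes, namely where a coordinate crosses a vertex and the number $k$ of interior points on an edge jumps. One must confirm that the peeling flow matches continuously across these transitions; for instance, a lone interior point is never evacuated, and one checks that as an adjacent point slides into a vertex the retracted configuration limits to the one in which that point sits exactly at the vertex, the displacement of the surviving point tending to zero with the crossing time. Managing these boundary matchings, together with careful bookkeeping of the labels so that the retraction respects the ordered structure, is the technical heart of the argument; the dimension bound and the tree-versus-graph dichotomy are then formal consequences of the cell structure described above.
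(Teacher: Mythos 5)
Your proposal matches the paper's proof in all essentials: the paper uses exactly this discretized model $\DConf_n^{\sink}(G,V)$ (cells $\sigma_1 \times \cdots \times \sigma_n$ with at most one coordinate per open edge, injectively assigned moving coordinates and arbitrary vertex assignments for the rest), reads off the dimension bound $|E|$ and the cubical structure for trees from the same cell count, and obtains the homotopy equivalence via an edge-by-edge deformation retraction that leaves points on vertices fixed. The only difference is one of packaging: the paper cites Chettih--L\"utgehetmann \cite{CL} for the interval and circle cases and extends their retraction one edge at a time, whereas you construct the evacuation homotopy from scratch, so your continuity analysis at the vertex-crossing strata is precisely the technical content the paper delegates to that citation.
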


In the case of classical configuration spaces of graphs, Abrams has shown \cite{A} that $\Conf_n(G)$ is homotopy equivalent to a CW complex. Refining this, Ghrist \cite{G} showed that $\Conf_n(G)$ is homotopy equivalent to a CW complex whose dimension at most the number of vertices of $G$ of degree at least 3. Our result therefore represents one of the (many) similarities between sink configuration spaces and classical configuration spaces of graphs. One notable fact about both Theorem \ref{fcm} and the theorem of Ghrist is that the dimension of these spaces are bounded independent of $n$. While it is known that configuration spaces of manifolds can have cellular models (see \cite{FH}), these models do not usually have dimension which is independent of $n$.\\

Following this construction, we will be able to prove many topological facts about sink configuration spaces. For instance, this model will easily allow us to compute the Euler characteristic of $\Conf_n^{\sink}(G,V)$ (see Corollary \ref{eulerchar}) and prove that these spaces are $K(\pi,1)$ (see Theorem \ref{kpi1}). Perhaps more substantially, we will also be able to compute the fundamental group of $\Conf_n^{\sink}(G,V)$ whenever $G$ is a tree.\\

\begin{thmab} \label{treepi1}
Let $G$ be a tree with vertex set $V$ and edge set $E$. Then $\pi_1(\Conf_n^{\sink}(G,V))$ admits a presentation with $|E|\cdot((n-2)2^{n-1}+1)$ many generators, whose every relation is a commutator. If $G$ is a line segment and $|V| = 2$, then $\pi_1(\Conf_n^{\sink}(G,V))$ is free, whereas if $|V| = 3$ then $\pi_1(\Conf_n^{\sink}(G,V))$ is right-angled Artin.\\
\end{thmab}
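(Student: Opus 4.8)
The plan is to read the presentation off of the cubical model supplied by Theorem~\ref{fcm}. Let $X$ denote the cube complex homotopy equivalent to $\Conf_n^{\sink}(G,V)$. Since $\pi_1$ depends only on the $2$-skeleton, I would fix a maximal spanning tree $T$ in the $1$-skeleton $X^{(1)}$ and invoke the standard presentation of the fundamental group of a CW complex: its generators are the $1$-cells of $X$ lying outside $T$, and its relators are the boundary words of the $2$-cells of $X$. The number of generators is therefore the first Betti number of the graph $X^{(1)}$, that is,
\[
\#\{\text{$1$-cells of }X\} - \#\{\text{$0$-cells of }X\} + 1 .
\]
The remaining work splits into (i) understanding the shape of the relators, and (ii) evaluating this count.

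For (i), the claim that every relator is a commutator is forced by the cubical structure: each $2$-cell of $X$ is a square, and traversing its boundary spells a word $a b a^{-1} b^{-1}$, where the two pairs of parallel edges of the square represent the same pair of elementary strand-moves carried out in opposite orders. In the special case $|V| = 3$ I would push this further and check that, after the spanning-tree reduction, the two parallel edges of each square become a single generator, so that every relator is a commutator of two of the generators themselves. A presentation whose relators are exactly such commutators is by definition a right-angled Artin presentation, which yields the asserted conclusion. This is precisely the mechanism by which nonpositively curved square complexes present right-angled Artin groups.

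The heart of the argument, and the step I expect to be the genuine obstacle, is the cell count in (ii). Here I would give an explicit combinatorial description of the cells of $X$: a $0$-cell is a resting configuration of the $n$ strands at subdivided vertices, and a $1$-cell is such a configuration with a single strand allowed to traverse one edge. The feature distinguishing sink configuration spaces from ordinary graph configuration spaces is that any number of strands may pile up on a vertex of $V$, and keeping track of the allowed occupancies at the two ends of each edge is exactly what produces the factor $(n-2)2^{n-1}+1$. I would organize the enumeration so that it is local to a single edge of $G$ --- counting the relevant resting and one-strand-moving states attached to that edge --- and then sum over the $|E|$ edges of the tree to obtain the stated total. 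Verifying that this local-to-global bookkeeping introduces no overcounting at the vertices, where edges meet and collisions are permitted, is the most delicate point, and the Euler characteristic computation of Corollary~\ref{eulerchar} would serve as a useful cross-check.

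Finally, for the case of a single edge ($|V| = 2$, $|E| = 1$) I would argue that no essential commutator relations survive, so that $\pi_1$ is free of rank $(n-2)2^{n-1}+1$. Because the $n$ strands on a segment can never change their left-to-right order except by colliding at an endpoint, I expect to be able to choose $T$ so that each square of $X$ has one of its two moves lying in $T$; the corresponding commutator $a b a^{-1} b^{-1}$ then collapses, one of $a, b$ being trivial, leaving a presentation with no relations. Equivalently, one shows that $X$ collapses onto a graph. The free rank then agrees with the generator count computed in (ii), completing the proof.
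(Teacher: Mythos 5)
Your proposal founders on its central claim about the relators. In the cubical model $\DConf_n^{\sink}(G,V)$, the two parallel edges of a square are \emph{distinct} $1$-cells: the square corresponding to strand $i$ on edge $e$ and strand $j$ on edge $f$ has boundary word $abc^{-1}d^{-1}$ in four pairwise distinct $1$-cells ($a$ = strand $i$ crossing $e$ while $j$ sits at one endpoint of $f$, $c$ = the same crossing while $j$ sits at the other endpoint, etc.). Choosing a spanning tree does not identify $a$ with $c$ or $b$ with $d$ in general, so the $2$-skeleton presentation does not have commutator relators, let alone commutators of generators. Indeed, if your claim were correct it would show $\pi_1(\Conf_n^{\sink}(G,V))$ is right-angled Artin for \emph{every} tree, which the paper explicitly states is unknown. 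Your generator count is also internally inconsistent: the spanning-tree presentation has $b_1$ of the $1$-skeleton many generators, namely $n|E||V|^{n-1}-|V|^n+1$, which matches $(n-2)2^{n-1}+1$ only when $|E|=1$; already for the path with three vertices and $n=2$ it gives $4$, while the theorem asserts $2\cdot((2-2)\cdot 2^{1}+1)=2$. To reach the stated count you must eliminate generators using the square relators, and after such Tietze moves the surviving relators are products of conjugates with no visible commutator structure --- controlling this reduction is essentially the entire content of the theorem, and your proposal defers it.

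The paper avoids these problems by a different route: induction on $|E|$ together with Van Kampen's theorem. One removes a pendant edge $e_0$ with leaf $v_0$, covers $\DConf_n^{\sink}(G,V)$ by subcomplexes $A_\phi$ indexed by assignments $\phi:[n]\to\{\overline{e_0},G'\}$ of strands to the edge or the subtree, and uses that each $A_\phi$ (and each pairwise intersection) is a \emph{product} of two smaller sink configuration spaces. Van Kampen then collapses the free product to $\pi_1(\Conf_n^{\sink}(G',V'))\star\pi_1(\Conf_n^{\sink}(\overline{e_0},\{v_0,v_1\}))$ modulo relations $[\gamma_\phi,\alpha_\phi]$, where $\gamma_\phi,\alpha_\phi$ are loops supported on complementary sets of strands --- commutators of \emph{elements}, exactly as the theorem asserts, not of generators. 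The rank $(n-2)2^{n-1}+1$ for a single edge is imported from \cite{CL} (there the model is the hypercube $1$-skeleton, so $\pi_1$ is free --- your $|V|=2$ case is correct but simpler than you suggest, since there are no squares at all to collapse), and induction yields $|E|$ such blocks of generators. Your proposed verification for $|V|=3$, that parallel edges of each square merge into one generator after the tree reduction, is precisely the unproven point; the paper instead gets the right-angled Artin conclusion for the three-vertex segment because in that case the Van Kampen relations can be taken to be commutators of the generators themselves.
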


This theorem was proven in the case wherein $G$ is a line segment with two vertices in \cite{CL}. Note that Theorem \ref{treepi1} is proven via an explicit induction argument and Van Kampen's Theorem. It will follow that one may always, in principle, compute these fundamental groups explicitly (see Theorem \ref{treepi}). At this time it is unclear whether $\pi_1(\Conf_n^{\sink}(G,V))$ is always right-angled Artin, as it is unclear whether one can choose a presentation wherein the relations are commutators of generators, as opposed to just commutators. This theorem is also significant, as it is the first evidence of the following fact: the spaces $\Conf_n^{\sink}(G,V)$ will, in general, depend heavily on the vertex and edge sets of $G$. In particular, $\Conf_n^{\sink}(G,V)$ is not a homeomorphism invariant of the graph $G$, although it is invariant under cellular homeomorphisms. This is in sharp contrast to the classical configuration spaces of graphs, which are unchanged by introducing vertices to $G$.\\

While the cellular model of Theorem \ref{fcm} is convenient for developing visual intuition for the sink configuration spaces (see Figure \ref{cellexample}), it is not quite refined enough to determine anything particularly significant about the homology groups of $\Conf_n^{\sink}(G,V)$. Towards this end, we will apply techniques from discrete Morse theory.\\

\begin{thmab}\label{sinkdmt}
There exists a discrete gradient vector field $X$ on the space $\Conf_n^{\sink}(G,V)$ for which the number of critical $i$-cells of $X$ is bounded by $p_i(n)(i+1)^n$, where $p_i$ is a polynomial of degree $i$.\\
\end{thmab}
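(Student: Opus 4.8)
The plan is to apply Forman's discrete Morse theory to the cellular model furnished by Theorem~\ref{fcm}, imitating the discrete gradient vector field of Farley and Sabalka for ordinary graph configuration spaces while accounting for the collisions permitted at vertices. First I would fix a spanning tree $T \subseteq G$ together with a root and a depth-first ordering of the vertices and edges, and recall that the model of Theorem~\ref{fcm} is a regular CW (indeed cubical, for trees) complex whose $i$-cells record a placement of the $n$ labeled strands in which exactly $i$ of the subdivided edges carry a moving point and the remaining strands are stationary, with overlaps occurring only over $V$. Realizing this model requires subdividing each edge of $G$ into $O(n)$ positions so as to separate strands that lie on a common edge interior, and this fact is what will be responsible for the polynomial factor in the final count.

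Next I would define the matching. Following Farley--Sabalka, to each non-critical cell I associate the unique elementary reduction determined by the lowest strand that is \emph{unblocked}, pairing a cell in which that strand sits at a vertex with the cell in which it has been pushed one step along the edge toward the root, and conversely. The essential modification for sink configuration spaces lies in the notion of ``blocked'': a strand resting on a vertex $v$ is now blocked only when every direction toward the root is obstructed either by the tree order or by a \emph{moving} strand, since several stationary strands may legitimately pile up on the single sink $v$. I would then verify that this rule yields a genuine discrete gradient vector field, i.e. that the associated matching on the face poset is acyclic; as in Farley--Sabalka this should follow by exhibiting a quantity, namely the tuple of strand positions read in the chosen order, that changes monotonically along every nontrivial $V$-path, so that no such path can close up.

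The heart of the argument is the identification and enumeration of the critical cells. A cell is critical exactly when it admits no reduction, which forces each of the $i$ moving edges to be ``disrespectful'' and every stationary strand to be blocked in the modified sense above; such a configuration is pinned down by the combinatorial type of the $i$ critical edges together with the way the $n$ labeled strands are distributed among the finitely many admissible regions it determines. I would bound these in two factors. Choosing the $i$ critical edges, with their local data, from the $O(n)$-size subdivided graph contributes $\binom{O(n)}{i}$-many possibilities, which is a polynomial $p_i(n)$ in $n$ of degree exactly $i$. Having fixed this skeleton, each of the $n$ strands must be assigned to one of at most $i+1$ distinguishable states determined by the $i$ critical edges, contributing at most $(i+1)^n$; multiplying the two factors yields the desired bound $p_i(n)(i+1)^n$ on the number of critical $i$-cells of $\Conf_n^{\sink}(G,V)$.

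I expect the main obstacle to lie in this last stage rather than in the formal setup, and to be twofold. First, one must check that the modified blocking rule still produces an \emph{acyclic} matching once collisions are allowed, since permitting arbitrarily many strands to occupy a single vertex enlarges the space of $V$-paths and could a priori create cycles; the monotone invariant must be chosen so as to survive these coincidences. Second, pinning the exponential base to exactly $i+1$ and the polynomial degree to exactly $i$ requires a careful accounting of precisely which vertex positions a stationary blocked strand may occupy in a critical configuration, so that the labeled strands are genuinely distributed among $i+1$ states and no more.
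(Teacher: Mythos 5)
Your overall architecture mirrors the paper's proof closely: fix a spanning tree with a depth-first labeling, define a Farley--Sabalka-style matching that pushes the lowest-indexed strand at the least unblocked vertex onto the tree edge below it, prove acyclicity via a monotone invariant (the paper uses the functions $f_v$ counting strands on the geodesic to $v$), and then classify critical cells as those with all vertices blocked and no order-respecting edges, so that stationary strands are confined to the root and the tops $\tau(e)$ of the occupied tree edges, giving at most $i+1$ (more precisely $\delta_i = \min\{i+1,|V|\}$) admissible vertices. However, there is a genuine error in your setup: you assert that realizing the cellular model ``requires subdividing each edge of $G$ into $O(n)$ positions,'' and you derive the polynomial factor $p_i(n)$ from the $\binom{O(n)}{i}$ choices of critical edges in the subdivided graph. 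Neither claim is correct for sink configuration spaces. The model $\DConf_n^{\sink}(G,V)$ uses the given cell structure with \emph{no} subdivision: a cell is a product $\sigma_1 \times \cdots \times \sigma_n$ in which at most one coordinate occupies any open edge, while arbitrarily many coordinates may pile onto a vertex --- this is precisely why Theorem~\ref{fcm} gives dimension at most $|E|$, independent of $n$. Worse, subdivision is not merely unnecessary but impermissible here: unlike classical $\Conf_n(G)$, the space $\Conf_n^{\sink}(G,V)$ is \emph{not} invariant under refining the cell structure, since every new vertex is a new sink (Theorem~\ref{homdim} shows the homological dimension equals $|E|$ for $n \gg 0$, so subdividing into $O(n)$ edges changes the homotopy type outright). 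Your matching would therefore live on a model of the wrong space.

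This misstep also corrupts the final count. In the paper, the edge choice contributes only the \emph{constant} $\binom{|E|}{i}$; the degree-$i$ polynomial factor comes instead from choosing which $i$ of the $n$ labeled strands occupy those edges, namely $\binom{n}{i}\,i!$, after which the remaining $n-i$ strands are distributed among the $\delta_i \leq i+1$ admissible vertices, yielding the bound $\binom{|E|}{i}\binom{n}{i}\,i!\,\delta_i^{n-i} \leq p_i(n)(i+1)^n$ with $\deg p_i = i$ (Proposition~\ref{critclass}). In your accounting, a strand's ``state'' must record not only whether it is stationary or moving but \emph{which} of the $i$ edges it occupies; either you pay an extra labeled-assignment factor $\binom{n}{i}\,i!$ on top of $\binom{O(n)}{i}$, pushing the polynomial to degree $2i$, or you enlarge the per-strand state space beyond $i+1$. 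So as written your argument does not deliver the stated bound even formally. The fix is simple and makes the proof cleaner than you feared: work on the unsubdivided model, where the second obstacle you flag (pinning the base to $i+1$) is resolved exactly by the blocking analysis you sketch, and the first (acyclicity in the presence of collisions) is handled by the $f_v$-monotonicity argument together with a descent on the minimal order-respecting edge.
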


For the reader unfamiliar with discrete Morse theory, Section \ref{dmt} provides a self-contained introduction to the subject. Put briefly, discrete Morse theory is a  means by which one can determine which cells of a CW complex are ``critical'' to the homotopy type of the space. Similar to classical Morse theory, one defines a map, called in this context a \textbf{discrete vector field}, which divides the cells of the space to three categories: redundant, collapsible, and critical. It is then shown that the space will deformation retract onto a complex with $m_i$ $i$-cells, where $m_i$ is the number of critical $i$-cells. Discrete Morse theory was originally developed by Foreman \cite{Fo1}, and has since seen tremendous application in a variety of ways throughout combinatorial topology. Farley and Sabalka, for instance, placed a discrete Morse structure on the so-called unordered configuration spaces of graphs in \cite{FS}. They then used this structure to prove a variety of theorems about the homology and fundamental groups of these unordered configuration spaces.\\

We will use the structure granted to us by Theorem \ref{sinkdmt} to prove the following.\\

\begin{thmab}\label{dmtmainthm}
Let $G$ be a tree with vertex set $V$ and edge set $E$. Then for all $n,i \geq 0$
\begin{enumerate}
\item $H_i(\Conf_n^{\sink}(G,V))$ is torsion free;
\item the homological dimension of $\Conf_n^{\sink}(G,V)$ is given by
\[
\hdim(\Conf_n^{\sink}(G,V)) = \min\{\lfloor \frac{n}{2} \rfloor, |E|\}
\]
\item the groups $H_i(\Conf_n^{\sink}(G,V))$ depend only on $i,n,$ and $|E|$.\\
\end{enumerate}
\end{thmab}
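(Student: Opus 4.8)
The plan is to leverage the discrete Morse structure from Theorem \ref{sinkdmt} as the central tool, since all three claims concern the homology of a tree's sink configuration space and discrete Morse theory converts homological questions into combinatorial ones about critical cells. The key reduction is that for a tree, the cellular model of Theorem \ref{fcm} is a cubical complex, and a discrete gradient vector field $X$ produces a Morse complex whose $i$-th chain group is free on the critical $i$-cells; the boundary maps in this complex are determined by counting gradient paths between critical cells. I would begin by fixing an explicit, combinatorially natural gradient field on the tree (presumably via a chosen root and an ordering of edges/vertices, in the style of Farley--Sabalka), so that the critical cells admit a clean combinatorial description in terms of local configurations at the branch vertices of $G$.

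\medskip

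For part (1), torsion-freeness, the strategy is to show that the boundary maps of the Morse complex vanish, so that $H_i$ is \emph{free abelian} of rank equal to the number of critical $i$-cells. This is the heart of the matter: I would argue that no two critical cells are joined by a gradient path, or that gradient paths cancel in pairs, forcing the Morse differential to be zero. For a tree this is plausible because the absence of cycles severely constrains how a gradient path can travel between critical cells; the local structure at each vertex decouples, and a critical cell in one ``branch'' cannot flow into a critical cell in another. Once the differential is zero, torsion-freeness is immediate and the Betti numbers equal the critical cell counts, which sets up parts (2) and (3) simultaneously.

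\medskip

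For part (2), the homological dimension, I would combine two bounds. The cubical model has dimension at most $|E|$ (Theorem \ref{fcm}), giving $\hdim \le |E|$; separately, since each cube direction corresponds to a moving strand occupying an edge and the remaining strands must park at distinct vertices, a top-dimensional critical cell of dimension $d$ requires roughly $2d$ strands (one moving along each of $d$ edges plus blockers), yielding the $\lfloor n/2 \rfloor$ bound. The matching lower bound comes from exhibiting an explicit critical cell of dimension $\min\{\lfloor n/2\rfloor, |E|\}$ that survives to homology, which follows once the differential is known to vanish. For part (3), dependence only on $i, n, |E|$, I would show that the combinatorial count of critical $i$-cells depends only on these data and not on the finer tree structure; intuitively, after choosing the gradient field, each critical cell is recorded by how strands are distributed among the $|E|$ edges together with blocking data, a count insensitive to how the edges are arranged into a tree.

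\medskip

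\textbf{The main obstacle} I anticipate is part (1): proving the Morse differential vanishes. Establishing that critical cells cannot be connected by gradient paths, or that all contributions cancel, requires a careful and somewhat delicate analysis of the gradient flow, and getting the bookkeeping of ``blocked'' versus ``free'' strands exactly right at branch vertices is where the real work lies. If the differential does not vanish outright, one would instead need to track signs and show cancellation, which is considerably more intricate. Once part (1) is secured, parts (2) and (3) reduce to the comparatively routine combinatorics of enumerating and bounding critical cells, so I would invest the bulk of the effort in pinning down the gradient field and proving the differential is identically zero.
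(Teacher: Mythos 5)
Your architecture coincides with the paper's: the theorem is Corollary \ref{bigtreecor} there, deduced from the explicit gradient field of Definition \ref{sinkdgvf} once the Morse differential is shown to vanish (Theorem \ref{trivdiff}), and your treatments of parts (2) and (3) are exactly the paper's counting arguments --- a critical $i$-cell needs one coordinate per occupied edge plus one coordinate blocking each $\tau(e)$, giving the $n \geq 2i$ threshold, and the critical-cell count is insensitive to how the $|E|$ edges assemble into a tree (Proposition \ref{critclass}). But you have correctly identified, and then not closed, the gap on which everything rests: part (1). Neither of your proposed mechanisms is substantiated. The claim that no two critical cells are joined by a gradient path is not what the paper proves, and your decoupling heuristic (``a critical cell in one branch cannot flow into a critical cell in another'') is doubtful on its face, since coordinates parked at a branch vertex block one another across branches; acyclicity of $G$ by itself yields no one-line control of the stable flow $\widetilde{\partial} = \pi F^{\infty}\partial$. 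Nor does the paper argue by sign-cancellation in pairs: it shows every contribution is annihilated by the projection $\pi$ onto critical cells.

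What the paper actually does (Theorem \ref{trivdiff}) is an induction on $|E|$ with a specific device you would need to invent: remove the leaf $v_0$ of \emph{highest} label together with its edge $e_0$, obtaining a subtree $G'$. The inclusion $\DConf_n^{\sink}(G',V') \hookrightarrow \DConf_n^{\sink}(G,V)$ respects the Morse structure precisely because $v_0$ is labeled last, yielding a short exact sequence of Morse complexes $0 \rightarrow \M_{\dt,n}^{G'} \rightarrow \M_{\dt,n}^{G} \rightarrow Q_{\dt,n} \rightarrow 0$, which is split on homology by the retraction contracting $e_0$ (Lemma \ref{lem1}) --- a step available only for trees, since $G'$ re-embeds in $G$. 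The differential on $Q_{\dt,n}$ is then killed by a two-case flow analysis: any face produced by disassembling $e_0$ flows only to cells in which $e_0$ is absent or order respecting (again because $v_0$ is maximal), so those terms die under $\pi$; and any flow that never disassembles $e_0$ is indistinguishable from a flow in $\DConf_m^{\sink}(G',V')$ on fewer strands, which vanishes by induction. Since the resulting homology is then term-wise isomorphic to a complex of free $\Z$-modules, the differential of $\M_{\dt,n}^{G}$ itself must be trivial. Without this leaf-stripping induction (or a substitute for it), your part (1) is unproven, and with it the lower bound in your part (2) --- the assertion that the exhibited critical cells ``survive to homology'' --- also remains conditional.
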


There are a few remarks we must make about the above theorem. Firstly, it has been shown by Chettih and L\"utgehetmann \cite{CL} that the homology groups of $\Conf_n(G)$ are always torsion-free, with no assumptions on $G$. While it is very possible that this is the case in the context of sink configuration spaces, we do not go beyond the case of trees in this paper. Secondly, the third part of Theorem \ref{dmtmainthm} is similar to Theorem \ref{treepi1} in that it asserts that certain topological invariants of $\Conf_n^{\sink}(G,V)$ depend only on simple combinatorial invariants of $G$ and its vertex set. We therefore stress that this simplicity seems entirely unique to trees and more basic graphs. Using the techniques of this paper, one can construct two graphs $G,G'$ with the same number of vertices and edges for which the homologies of $\Conf_n^{\sink}(G,V)$ and $\Conf_n^{\sink}(G',V')$ are not isomorphic. For instance, take $G$ to be the graph which looks like the capital letter 'Q' and take $G'$ to be the circle with two vertices.\\

Finally, one might immediately observe that Theorems \ref{treepi1} and \ref{dmtmainthm} are applications of our cellular models to the cases where $G$ is a tree. Similar to the classical theory of configuration spaces of graphs, it appears that studying $\Conf_n^{\sink}(G,V)$ for general graphs $G$ is quite difficult. We therefore will approach these cases from an entirely different perspective; that of asymptotic algebra.\\

\subsection{Sink configuration spaces and representation stability}

While it appears to be quite difficult to say anything about the homology groups of $\Conf_n^{\sink}(G,V)$ for general graphs, it is often times easier to deduce facts about these groups while allowing $n$ to vary. Many of the great recent advancements in the study of configuration spaces of manifolds have been achieved through the use of \textbf{representation stability} techniques (see, \cite{M}\cite{HR}\cite{CEF}\cite{MW} for a small sampling). The language of representation stability was first introduced by Church and Farb in \cite{CF} to provide a language for dealing with families of algebraic objects which were somehow consistently acted on by the symmetric groups. More precisely, let $\{X_n\}_{n \in \N}$ be a family of modules over a fixed commutative ring $k$ such that,
\begin{enumerate}
\item for every injection of sets $f:[m] \rightarrow [n]$ there is a map $f_\as:X_m \rightarrow X_n$;
\item if $f,g$ are two composable injections then $(f\circ g)_\as = f_\as \circ g_\as$;
\item there is a finite set $\{v_i\} \subseteq \bigoplus_n X_n$ such that for all $n$, every element $v \in X_n$ can be expressed as $v = \sum_{i} a_i(f_i)_\as(v_i)$, where $f_i$ is some injection of sets, and $a_i \in k$.\\
\end{enumerate}
Then we say that the family $\{X_n\}$ is \textbf{representation stable}. For instance, configuration spaces $\Conf_n(Y)$ admit forgetful maps on their points, which, in turn, induce maps on their cohomology groups $f_\as: H^i(\Conf_m(Y)) \rightarrow H^i(\Conf_n(Y))$ for every injection of sets $f:[m] \hookrightarrow [n]$. It was proven by Church \cite[Theorem 1]{Chu}, and later reproven by Church, Ellenberg and Farb \cite[6.2.1]{CEF}, that the collection $H^i(\Conf_\dt(Y))$ is representation stable, assuming $Y$ is an orientable manifold of dimension at least 2. Since then, there has been a tremendous boom of asymptotically flavored arguments and results in the literature.\\

Coming back to the configuration spaces of graphs, it can be seen that their cohomologies cannot possibly be representation stable. As before, this issue seems to stem from the fact that it is much harder for points on a graph to move past one another. In one extreme case, for example, the $n$-strand configuration space of a line segment is homotopy equivalent to $n!$ disjoint points. For more complicated graphs, their configuration spaces are connected, but this doesn't prevent $n!$ type growth from appearing in the higher homology groups (see \cite[Proposition 4.1]{G}, for example). Despite this fact, recent work of L\"utgehetmann \cite{L1}\cite{L2}, as well as the author \cite{R1}, have made progress in treating the configuration spaces of graphs from a more asymptotic viewpoint. While it is not the case that the homology of sink configuration spaces of graphs are representation stable in the sense of Church and Farb, one of the main results of this paper is that they do exhibit generalized representation stability in the sense of \cite{R2}.\\

While we save the technical aspects of what this means for the body of the paper (see Section \ref{fiddef}), we can at least give an intuition now. Given an injection of sets $f:[m] \hookrightarrow [n]$ we would like to obtain a map of spaces $f_\as:\Conf_n^{\sink}(G,V) \rightarrow \Conf_n^{\sink}(G,V)$. Given no extra data, there doesn't seem to be any obvious way to do this. However, if we are also given a map $g$ from the compliment of the image of $f$ in $[n]$ to the set of vertices $V$, then we can define such a map. Namely, the pair $(f,g)$ induces a map $(f,g)_\as:\Conf_m^{\sink}(G,V) \rightarrow \Conf_n^{\sink}(G,V)$, where one uses $f$ to push forward the coordinates that are given, while using $g$ to determine to which vertices the newly introduced points should be sent. That is,
\[
(f,g)_\as(x_1,\ldots,x_m)_j = \begin{cases} x_{f^{-1}(j)} &\text{ if $j$ is in the image of $f$}\\ g(j) &\text{ otherwise.}\end{cases}
\]
This gives the homology groups $H_i(\Conf_\dt^{\sink}(G,V))$ what \cite{R2} calls generalized representation stability. One major consequence of this stability is the following.\\

\begin{thmab}\label{sinkexpstab}
The modules $H_i(\Conf_n^{\sink}(G,V))$ enjoy the following properties:
\begin{enumerate}
\item there exist polynomials $p_1^{(i)},\ldots,p_{\delta_i}^{(i)} \in \Q[x]$ such that for all $n \gg 0$
\[
\rank_\Z(H_i(\Conf_n^{\sink}(G,V))) = p_1^{(i)}(n) + p_2^{(i)}(n)2^n + \ldots + p_{\delta_i}^{(i)}(n)\delta_i^n
\]
where
\[
\delta_i = \min\{i+1, |V|\}
\]
\item the polynomial $p_{|V|}^{(|E|)}$ is non-zero, and if $G$ is a tree then $p_j^{(i)}$ is non-zero for all $0 \leq i \leq |E|$ and $j \leq i+1$;
\item there is a finite integer $e_i^{(G,V,E)}$, independent of $n$, for which the exponent of $H_i(\Conf_n^{\sink}(G,V))$ is at most $e_i^{(G,V,E)}$.\\
\end{enumerate}
\end{thmab}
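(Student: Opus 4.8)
The plan is to derive all three statements by feeding the discrete Morse model of Theorem~\ref{sinkdmt} into the finiteness machinery underlying generalized representation stability from \cite{R2}. Let $\C$ denote the category whose objects are the sets $[n]$ and whose morphisms $[m]\to[n]$ are the pairs $(f,g)$ described in the introduction, so that $n\mapsto \Conf_n^{\sink}(G,V)$ is a functor on $\C$ and $H_i(\Conf_\dt^{\sink}(G,V))$ is a $\C$-module. The first and most essential step is to show that this $\C$-module is finitely generated. I would do this by promoting the discrete gradient vector field $X$ of Theorem~\ref{sinkdmt} to a $\C$-equivariant field, so that its associated Morse complex $M_\dt$ becomes a complex of finitely generated free $\C$-modules with $\rank_\Z(M_i) \leq p_i(n)(i+1)^n$. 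Since $H_i$ is a subquotient of $M_i$, finite generation then follows from the Noetherianity of the category of $\C$-modules established in \cite{R2}.

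Granting finite generation, part (1) is an application of the structure theorem for Hilbert functions of finitely generated $\C$-modules in \cite{R2}, which guarantees that $\rank_\Z(H_i(\Conf_n^{\sink}(G,V)))$ agrees for $n \gg 0$ with a $\Q$-combination $\sum_j p_j^{(i)}(n)\, j^n$ whose exponential bases are at most $|V|$; here $|V|$ reflects the number of vertex labels available to each newly introduced strand. To sharpen the bound to $\delta_i = \min\{i+1,|V|\}$ I would use that $H_i$ is a subquotient of the Morse group $M_i$, so that $\rank_\Z(H_i) \leq p_i(n)(i+1)^n$; since distinct exponential bases cannot cancel, no term $p_j^{(i)}(n)\, j^n$ with $j > i+1$ can survive. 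Intersecting the two bounds yields exactly $\delta_i$.

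For part (2) the upper bounds are automatic, and the content lies in the lower bounds, i.e. in exhibiting homology that genuinely realizes the extremal growth. In the top degree $i=|E|$, which by Theorem~\ref{fcm} is the highest possible homological degree, I would construct explicit nonzero classes built from $|E|$ independent edge cycles, each of which may be parked at any of the $|V|$ vertices, forcing a nonvanishing $|V|^n$ contribution and hence $p_{|V|}^{(|E|)} \neq 0$. For trees, Theorem~\ref{dmtmainthm}(3) reduces every case to a single model depending only on $i,n,|E|$, so I would compute the bigraded generating function of the (now torsion-free and essentially minimal) Morse complex on a convenient tree such as a caterpillar and read off that every coefficient $p_j^{(i)}$ with $j \leq i+1$ is nonzero.

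Finally, part (3) follows from finite presentation of $H_i$ as a $\C$-module: its torsion is the torsion of the homology of a complex of finitely generated free $\C$-modules, hence for all $n$ it is cut out by the images of finitely many integer matrices, whose Smith normal forms involve only finitely many elementary divisors; the exponent $e_i^{(G,V,E)}$ may then be taken to be the largest of these. I expect the genuine obstacles to be twofold: making the field of Theorem~\ref{sinkdmt} strictly $\C$-equivariant, so that the Morse complex is a complex of $\C$-modules rather than merely an unstructured family indexed by $n$; and establishing the lower bounds of part (2), since the general machinery of \cite{R2} supplies the upper bounds and the shape of the answer but says nothing about which leading coefficients are nonzero.
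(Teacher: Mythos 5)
Your proposal founders at its foundational step: promoting the discrete gradient vector field of Theorem~\ref{sinkdmt} to an equivariant one is precisely what fails, and the paper flags this explicitly (Remark~\ref{oimod}). Whether a cell is critical depends on the order-respecting condition, which is destroyed by permuting coordinates, so the critical cells are not stable under the $\FI_V$-action; at best they carry an action of the subcategory $\OI_V$ of monotone injections, and even that is not known to commute with the Morse differential except when $(G,V,E)$ is a tree (Theorem~\ref{trivdiff}). The paper's actual route to finite generation (Theorem~\ref{fgmod}) avoids the Morse complex entirely: the cellular chain groups of $\DConf_n^{\sink}(G,V)$ assemble into a complex of \emph{free}, finitely generated $\FI_V$-modules (in fact $\bigoplus_{|E'|=i} M(E')$), whose differentials are visibly equivariant, and Noetherianity (Theorem~\ref{noeth}) then gives finite generation of $H_i$ as a subquotient. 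Discrete Morse theory enters only non-equivariantly, to count critical cells and bound Betti numbers by $p(n)\delta_i^n$ (Corollary~\ref{expbound}); granted that substitution, your derivation of part (1) from the structure theorem plus the observation that distinct exponential bases cannot cancel coincides with the paper's. Your part (3) is repaired the same way: once $H_i$ is finitely generated, its torsion submodule is finitely generated by Noetherianity and the exponent is bounded by the LCM of the orders of finitely many generators; your ``finitely many integer matrices / Smith normal form'' argument does not work as stated, since the presentation matrices at level $n$ grow with $n$ and bounding their elementary divisors uniformly is exactly what requires the categorical finiteness.

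There is a second genuine gap in part (2) for general graphs. Your plan to exhibit explicit classes built from $|E|$ independent edge cycles parked at vertices is unsubstantiated: you would need these chains to be cycles surviving to homology and to be linearly independent in numbers growing like $p(n)|V|^n$, and for general graphs the Morse differential is not computed (the paper notes this is difficult even for classical configuration spaces). The paper instead proves $p_{|V|}^{(|E|)} \neq 0$ purely numerically (Theorem~\ref{homdim}): the Euler characteristic equals $q(n)|V|^n$ with $\deg q = |E|$ (Corollary~\ref{eulerchar}), while every Betti number in degree $i < |E|$ is bounded by $p(n)\delta_i^n$ with $\deg p \leq i$, hence grows strictly slower; the alternating sum then forces the top Betti number to realize the extremal growth. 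For trees your plan --- invoke Theorem~\ref{dmtmainthm}(3) and compute on one convenient tree --- is sound in outline, but the computation you defer to ``reading off'' is the entire content: the paper carries it out via the recursion of Theorem~\ref{recursive} and an induction showing the constant term of $p_{j+1}^{(i)}$ equals $(-1)^j\binom{i}{j}$ (Theorem~\ref{proper}), which in particular is nonzero for all $j \leq i$.
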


We note that the second part of Theorem \ref{sinkexpstab} implies that the homological dimension of $\Conf_n^{\sink}(G,V)$ is equal to $|E|$ for $n \gg 0$. Indeed, Theorem \ref{fcm} implies that $|E|$ is an upper bound on the homological dimension, while the second part of Theorem \ref{sinkexpstab} implies that this bound is eventually realized. At this time it is unclear for what values of $n$ this homological dimension is realized, which is in contrast to the explicit bounds of Theorem \ref{dmtmainthm}.\\

For the reader more knowledge about generalized representation stability in the sense of \cite{R2}, it is interesting to note that if $G$ is a tree with vertex set $V$, then the homology groups $H_{|E|}(\Conf_\dt^{\sink}(G,V))$ exhibit the full range of possible growth. That is to say, We may write
\[
\rank_\Z(H_i(\Conf_n^{\sink}(G,V)) = p_1^{(|E|)}(n) + p_2^{(|E|)}(n)2^n + \ldots + p_{|V|}^{(|E|)}(n)|V|^n,
\]
where none of the polynomials $p_j^{(|E|)}$ are zero. To the knowledge of the author, sink configuration spaces are one of the first naturally arising examples of such behavior. Finally, note that computations done by the author suggest that the polynomials $p_j^{(i)}$ should be non-zero for all graphs $G$, and all $0 \leq i \leq |E|$ with $j \leq \delta_i$.\\

If we restrict ourselves to rational homology, the groups $H_i(\Conf_\dt^{\sink}(G,V);\Q)$ form a collection of complex symmetric group representations. As the name representation stability suggests, we should be able to say something about these representations as $n$ is allowed to vary. To this end we have the following.\\

\begin{thmab}\label{sinkrepstab}
Assume that $|V| = d$. For any $i$, The complex $\Sn_n$-representations $H_i(\Conf_n^{\sink}(G,V);\Q)$ enjoy the following properties:
\begin{enumerate}
\item for any partition $\lambda$, and any integers $n_1 \geq \ldots \geq n_d \geq |\lambda| + \lambda_1$, let $c_{\lambda,n_1,\ldots,n_d}$ be the multiplicity of $S(\lambda)_{n_1,\ldots,n_d}$ in $H_i(\Conf_{\sum_i n_i - (d-1)|\lambda|}^{\sink}(G,V);\Q)$ (see Definition \ref{padpar}). Then the quantity $c_{\lambda,n_1+l,\ldots,n_d+l}$ is independent of $l$ for $l \gg 0$;
\item if $c_{\lambda,n}$ is the multiplicity of $S(\lambda)_n$ in $H_i(\Conf_n^{\sink}(G,V))$, then there exists a polynomial $p(x) \in \Q[x]$ of degree $\leq d-1$ such that for all $n \gg 0$, $c_{\lambda,n} = p(n)$;
\item there is a finite integer $b_i^{(G,V,E)}$, independent of $n$, such that for any irreducible representation $S^{\lambda}$ appearing in $H_i(\Conf_n^{\sink}(G,V))$, $\lambda$ has at most $b_i^{(G,V,E)}$ boxes below its $d$-th row.\\
\end{enumerate}
\end{thmab}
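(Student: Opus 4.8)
The plan is to realize the family $\{H_i(\Conf_n^{\sink}(G,V);\Q)\}_n$ as a finitely generated module over the category of $V$-colored injections and then feed it into the generalized representation stability machine of \cite{R2}. Concretely, the maps $(f,g)_\as$ described in the introduction make $M := H_i(\Conf_\dt^{\sink}(G,V);\Q)$ into a module over the category $\FI_{d}$ whose objects are finite sets and whose morphisms $[m]\to[n]$ are pairs $(f,g)$ with $f$ an injection and $g\colon [n]\setminus \mathrm{im}(f)\to V$; here $d=|V|$ is the number of colors. Everything in the theorem will follow from the single structural fact that $M$ is a finitely generated $\FI_d$-module, together with the characteristic-zero structure theory of such modules.

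First I would establish finite generation. The cellular model of Theorem \ref{fcm}, or more efficiently the discrete gradient vector field of Theorem \ref{sinkdmt}, produces a chain complex computing $H_\dt(\Conf_\dt^{\sink}(G,V))$ whose terms are free $\FI_d$-modules: the critical $i$-cells assemble into a free module, and the bound $p_i(n)(i+1)^n$ on their number is exactly the Hilbert-function behavior of a finitely generated free $\FI_d$-module generated in bounded degree. Since $\FI_d$ is Noetherian over a Noetherian ring (as used in the proof of Theorem \ref{sinkexpstab}, following \cite{R2}), the homology of a bounded complex of finitely generated $\FI_d$-modules is again finitely generated. Base-changing to $\Q$ preserves finite generation, so $M$ is a finitely generated $\FI_d$-module over $\Q$. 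This step is the crux of the argument; once it is in hand, the remaining parts are formal consequences of the representation theory.

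With finite generation established, part $(1)$ is precisely the conclusion of the generalized representation stability theorem of \cite{R2}: a finitely generated $\FI_d$-module is presented in finite degree, and hence its isotypic decomposition into the generalized irreducibles $S(\lambda)_{n_1,\ldots,n_d}$ stabilizes under the simultaneous shift $(n_1,\ldots,n_d)\mapsto(n_1+l,\ldots,n_d+l)$, giving $c_{\lambda,n_1+l,\ldots,n_d+l}$ independent of $l$ for $l\gg 0$. Part $(3)$ likewise follows from finite generation: if $M$ is generated in degrees $\leq N_0$, then the ``genuinely combinatorial'' content of any constituent --- the boxes lying below the $d$-th row, which are unaffected by the color-carrying new points introduced by $\FI_d$-morphisms --- is bounded by a constant $b_i^{(G,V,E)}$ depending only on $N_0$. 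This is the $d$-color analogue of the weight and stability-degree bounds of Church--Ellenberg--Farb.

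It remains to deduce the polynomiality statement $(2)$, which is where the delicate combinatorics lies. Forgetting colors, each $\FI_d$-constituent restricts to an $\Sn_n$-representation whose non-core part is induced from a product of $d$ trivial blocks $\mathrm{triv}_{k_1}\boxtimes\cdots\boxtimes\mathrm{triv}_{k_d}$ with $k_1+\cdots+k_d=n-c$ for a fixed core size $c$. By the Pieri rule the multiplicity of a fixed ordinary irreducible $S(\lambda)_n$ is a sum of contributions indexed by such compositions, and by part $(1)$ each contribution is eventually constant; the number of admissible compositions is the number of lattice points in a dilating $(d-1)$-dimensional simplex, which is eventually a polynomial in $n$ of degree $d-1$. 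Hence $c_{\lambda,n}=p(n)$ for $n\gg 0$ with $\deg p\leq d-1$. The main obstacle throughout is the finite generation step, and secondarily the bookkeeping that matches the abstract $\FI_d$-constituents with the explicit padded partitions of Definition \ref{padpar} so that the degree-$(d-1)$ count can be read off.
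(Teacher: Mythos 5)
Your overall route is the paper's: endow $H_i(\Conf_\dt^{\sink}(G,V))$ with the structure of a module over $\FI_V$ (equivalently $\FI_d$, $d=|V|$), prove finite generation by exhibiting an equivariant chain complex of finitely generated $\FI_V$-modules and invoking Noetherianity (Theorem \ref{noeth}), then feed the rationalization into the structure theorem of \cite{R2} (Theorem \ref{genrepstab}), which contains all three assertions of the statement verbatim --- the paper proves nothing beyond finite generation (Theorem \ref{fgmod}) and then quotes that theorem. One genuine error, though: your ``more efficient'' alternative, running the finite-generation argument on the Morse complex of Theorem \ref{sinkdmt} with ``the critical $i$-cells assembling into a free module,'' fails, and the paper says so explicitly in Remark \ref{oimod}. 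Criticality is defined via the order-respecting condition, which is destroyed by permuting coordinates, so the critical cells are not closed under the $\FI_V$-action; they carry at best an action of the subcategory $\OI_V$, and even there it is unclear that the Morse differential is equivariant except when $(G,V,E)$ is a tree (Theorem \ref{trivdiff}). Your supporting remark that the critical-cell bound $p_i(n)(i+1)^n$ ``is exactly the Hilbert-function behavior'' of a free $\FI_d$-module is not an argument: matching growth rates does not produce a module structure. Fortunately your primary option is exactly the paper's: the cellular chain complex of $\DConf_\dt^{\sink}(G,V)$ is a complex of free $\FI_V$-modules (the $i$-chains are $\bigoplus M(E')$ over $i$-subsets $E'\subseteq E$, generated in degree $i$), and Noetherianity passes finite generation to homology; note the paper runs this on the first cellular model, not the Morse refinement, precisely to keep equivariance.

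On part (2), you attempt a reproof via the Pieri rule and a lattice-point count over compositions, where the paper simply reads the degree-$(d-1)$ polynomiality of $c_{\lambda,n}$ off from Theorem \ref{genrepstab}. Your sketch has a uniformity gap as written: the stabilization in part (1) holds ``for $l\gg 0$'' with threshold depending on $(\lambda,n_1,\ldots,n_d)$, and your sum ranges over a family of compositions growing with $n$, so eventual constancy of each summand does not by itself give eventual polynomiality of the total without a uniform stable range --- which is precisely what \cite{R2} supplies. Since the polynomiality statement is already part of the black box you invoke for (1) and (3), the cleanest repair is simply to cite it there as well.
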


\section*{Acknowledgments}
The author would like to send thanks to Jordan Ellenberg, John Wiltshire-Gordon, Jennifer Wilson and Graham White for various useful conversations during the writing and conception of this work. The author would also like to send very special thanks to Steven Sam for his support during the initial stages of this work. Finally, the author would like to acknowledge the generous support of the NSF via the grant DMS-1502553.\\

\section{Preliminary notions}

\subsection{Graphs and sink configuration spaces}

\begin{definition}
A \textbf{graph} $(G,V,E)$ is a triple of a compact connected CW-complex of dimension one $G$, along with the set of its 0-cells $V$ and its 1-cells $E$. An \textbf{isomorphism} of graphs $(G,V,E) \cong (G',V',E')$ is a cellular homeomorphism $\phi:G \rightarrow G'$. A \textbf{tree} is a graph $(G,V,E)$ for which $G$ is contractible. A \textbf{loop} in a graph $(G,V,E)$ is a 1-cell $e \in E$ whose endpoints are identified in $G$.\\
\end{definition}

Given a graph $(G,V,E)$, we will often slightly abuse nomenclature and refer to $G$ as being a graph. While this distinction is unimportant in many instances, it turns out to be quite relevant in the study of sink configuration spaces (see Definition \ref{sinkdef}). In the coming sections we will find that given a 1-dimensional CW-complex G, two non-isomorphic cellular models of $G$ can yield sink configuration spaces with drastically different topologies.\\

\begin{definition}
Let $(G,V,E)$ be a graph. The \text{$n$-th configuration space of $(G,V,E)$} is the topological space
\[
\Conf_n(G) = \{(x_1,\ldots,x_n) \in G^n \mid x_i \neq x_j \text{ if $i \neq j$}\}.
\]
\text{}\\
\end{definition}

Configuration spaces of graphs have seen a surge of interest due to their connections with robotics \cite{G}\cite{Far}, as well as their many interesting theoretical properties (see \cite{KP}\cite{A}\cite{FS}\cite{BF}\cite{CL}, for a sampling). There has also been a recent push in the literature to understand these spaces from the perspective of representation stability \cite{L1}\cite{L2}\cite{R1}. In this paper we will use the existing literature on the configuration spaces of graphs as a guiding tool for developing a theoretical framework for studying what we call sink configuration spaces.\\

\begin{definition}\label{sinkdef}
Let $(G,V,E)$ be a graph. The \textbf{$n$-th sink configuration space of $(G,V,E)$} is the topological space
\[
\Conf_n^{\sink}(G,V) = \{(x_1,\ldots,x_n) \in G^n \mid x_i \neq x_j \text{ or } x_i = x_j \in V\}.
\]
\text{}\\
\end{definition}

The cases $([0,1],\{0,1\},\{(0,1)\})$ and $(S^1,\{0\},\{(0,1)\})$ were studied in \cite{CL}. The work \cite{CL} is, to the knowledge of the author, also the first paper where these spaces were considered. Note that, unlike with normal configuration spaces, $\Conf^{\sink}_n(G,V)$ depends on the choice of cellular model of $G$ (see Theorem \ref{homdim}). For this reason we indicate the vertex set in our notation for the sink configuration spaces.\\

Much of the difficulty with studying configuration spaces of graphs comes from the fact that it is difficult for points on a graph to move past one another. Sink configuration spaces attempt to mitigate this problem by allowing points to collide with one another on vertices. We will find that, in many ways, this change makes sink configuration spaces easier to study.\\

\subsection{$\FI_V$-modules}\label{fiddef}

For the remainder of this section, we fix a graph $(G,V,E)$.\\

One way we will approach the study of sink configuration spaces is through the language of asymptotic algebra. We will find that Church-Farb representation stability theory \cite{CF} is not quite general enough to cover these spaces (see, for instance, Theorem \ref{homdim}). Instead, we will apply techniques of more general representation stability theories.\\

\begin{definition}\label{freemod}
The category $\FI_V$ is that whose objects are finite sets, and whose morphisms are pairs $(f,g):S \rightarrow T$ of an injection of sets $f:S \hookrightarrow T$ with a map of sets $g:T-f(S) \rightarrow V$. If $(f,g)$ and $(f',g')$ are two composable morphisms, then we set
\[
(f,g) \circ (f',g') = (f \circ f',h),
\]
where 
\[
h(x) = \begin{cases} g(x) &\text{ if $x \notin \text{im}f$}\\ g'(f^{-1}(x)) &\text{ otherwise.} \end{cases}
\] 
Note that if $|V| = d$, then $\FI_V$ is equivalent to the category $\FI_d$, which can be seen discussed in \cite{R2}, \cite{SS}, \cite{SS2}, \cite{Sa1} and \cite{Sa2}. The work \cite{SS2} approaches this category from a very different, but equivalent, perspective. Given a commutative ring $k$, an \textbf{$\FI_V$-module over $k$} is a functor $W:\FI_V \rightarrow k\Mod$. A \textbf{morphism of $\FI_V$} modules is a natural transformation of functors. The category of $\FI_V$-modules is denoted $\FI_V\Mod$. This is an abelian category, with abelian operations defined point-wise.\\

We will often use $W_S$ to denote $W(S)$, and $(f,g)_\as$ to denote $W(f,g)$. Given a finite set $S$, we define the \textbf{free $\FI_V$-module on $S$} to be the unique module $M(S)$ which satisfies
\[
\Hom_{\FI_V\Mod}(M(S),W) = W_S
\]
for every $\FI_V$-module $W$. Equivalently, $M(S)$ is the $\FI_V$-module which is defined on points by
\[
M(S)_T = k[\Hom_{\FI_V}(S,T)],
\]
and for which morphisms act by composition. We say that an $\FI_V$-module $W$ is \textbf{finitely generated} if it is surjected onto by a direct sum of free modules. That is, if there is a finite collection $\{v_i\} \subseteq \sqcup_{S} W_S$ which generates the whole of $W$.\\
\end{definition}

We note that the full subcategory of $\FI_V$ whose objects are the sets $[n] = \{1,\ldots,n\}$ is naturally equivalent to $\FI_V$. For this reason we will often restrict ourselves to working with this subcategory. Also observe that if $|V| = 1$, then $\FI_V$ is naturally equivalent to the category $\FI$ of finite sets and injections. $\FI$-modules were introduced by Church, Ellenberg, and Farb in their seminal work \cite{CEF}, and can be seen as one of the primary driving forces behind recent interest in so-called asymptotic algebra.\\

In this work we will be primarily interested in studying finitely generated $\FI_V$-modules. To this end, we have the following critical theorem.\\

\begin{theorem}[\cite{SS}, Theorems 7.1.2 and 7.1.5]\label{noeth}
Assume that $|V| = d$. If $W$ is a finitely generated $\FI_V$-module over a Noetherian ring $k$, then all submodules of $W$ are also finitely generated. Moreover, if $k$ is a field, then there exists polynomials $p_1^W,\ldots,p_d^W \in \Q[x]$ such that
\[
\dim_k(W_n) = p_1^W(n) + p_2^W(n)2^n + \ldots + p_d^W(n)d^n
\]
for all $n\gg 0$.
\end{theorem}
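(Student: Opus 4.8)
The plan is to recognize this as a purely structural statement about finitely generated $\FI_V$-modules, which is cited directly from \cite{SS}, and to organize a proof along the lines that one would expect from the theory of $\FI_d$-modules. Since $|V| = d$, the category $\FI_V$ is equivalent to $\FI_d$, so without loss of generality I would work with $\FI_d$ and its free modules $M(S)$.

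For the Noetherian statement, the key mechanism is a Gr\"obner-theoretic argument. First I would describe $\FI_d$ (equivalently $\FI_V$) as a combinatorial category admitting a suitable ordering on its morphism sets, and verify that it is what \cite{SS} calls a \emph{quasi-Gr\"obner category}: concretely, one exhibits an essentially surjective functor from a category of trees or words onto $\FI_d$ and checks that the relevant poset of morphisms is Noetherian under a monomial-type order. Once $\FI_d$ is quasi-Gr\"obner, the general machinery of \cite{SS} immediately yields that the category of finitely generated $\FI_d$-modules over a Noetherian ring $k$ is itself Noetherian; in particular every submodule of a finitely generated module $W$ is again finitely generated. The main obstacle in this half is purely the verification of the Gr\"obner condition, i.e.\ producing an order on $\Hom_{\FI_V}(S,T)$ for which initial ideals of $\FI_V$-submodules are finitely generated. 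This is exactly the content isolated as Theorem 7.1.2 in \cite{SS}, so I would either quote it or sketch the tree-based encoding of a morphism $(f,g)$ as the datum of an injection together with a $V$-labeling of the complement.

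For the dimension formula, I would proceed as follows. Suppose $k$ is a field and $W$ is finitely generated. By the first part, $W$ admits a finite resolution (or at least a finite presentation) by free modules $M(S_i)$, and since the free modules are the building blocks it suffices to compute $\dim_k M(S)_n$ and then use additivity of dimension along a finite filtration or presentation. The point is that
\[
M(S)_n = k[\Hom_{\FI_V}([m],[n])] \quad\text{for } S \cong [m],
\]
and a morphism $[m]\to[n]$ is a choice of an injection $f:[m]\hookrightarrow[n]$ together with a function $g:[n]\setminus f([m]) \to V$. Hence
\[
\dim_k M(S)_n = \binom{n}{m}\, m!\, \cdot d^{\,n-m},
\]
since there are $n(n-1)\cdots(n-m+1)$ injections and $d^{\,n-m}$ labelings of the $n-m$ leftover elements by $V$. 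For fixed $m$ this is a polynomial in $n$ (coming from the falling factorial) times $d^{\,n-m}$, which is of the form $q(n)d^n$ for a rational polynomial $q$. Summing such expressions over the finitely many generators, and accounting for cancellations coming from relations via an alternating sum over a finite presentation, produces a finite $k$-linear combination of terms $p_j(n) j^n$ with $0 \le j \le d$; collecting by the value of the base $j$ gives precisely the asserted shape
\[
\dim_k(W_n) = p_1^W(n) + p_2^W(n)2^n + \cdots + p_d^W(n)d^n
\]
for $n \gg 0$. The ``$n\gg 0$'' qualifier absorbs the finitely many small values where the binomial coefficients $\binom{n}{m}$ fail to agree with their polynomial extensions.

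The part I expect to be genuinely hard is establishing the Noetherianity, since passing from ``finitely generated'' to ``all submodules finitely generated'' is where the Gr\"obner/ordering input is indispensable and cannot be bypassed by the elementary counting used in the dimension formula. By contrast, the polynomial-times-exponential formula is, once Noetherianity is in hand, essentially bookkeeping: it reduces to the explicit count of morphisms in $\FI_V$ together with additivity of $\dim_k$ across a finite free presentation. Since both halves are exactly the statements proved as Theorems 7.1.2 and 7.1.5 in \cite{SS}, in the paper itself I would simply invoke those results; the sketch above indicates the route one takes to reprove them from scratch.
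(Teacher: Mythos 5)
The paper itself offers no proof of this statement: it is imported verbatim from \cite{SS} (Theorems 7.1.2 and 7.1.5), with a remark tracing the characteristic-$0$ case to \cite{Sn} and the $\FI$ case to \cite{CEF}\cite{CEFN}. Your bottom line --- cite those theorems --- therefore matches the paper exactly, and your account of the Noetherian half is a fair summary of the actual mechanism in \cite{SS}: exhibit $\FI_d$ as quasi-Gr\"obner via an ordering on morphism sets, then invoke the general Gr\"obner machinery.

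However, your sketch of the dimension formula contains a genuine gap. A free module $M([m])$ has $\dim_k M([m])_n = \binom{n}{m}\,m!\,d^{\,n-m}$, which is of the form $q(n)d^n$ \emph{only}; consequently no alternating sum over a finite free presentation (or resolution) can ever produce the lower exponential bases $j^n$ with $j < d$ that the theorem asserts. Moreover, kernels of maps between free $\FI_d$-modules need not be free, and finite free resolutions are not available in general, so ``additivity across a finite presentation'' reduces $\dim_k W_n$ to the dimension of a submodule of a free module --- which is exactly the quantity you are trying to control, making the argument circular. That the lower bases genuinely occur is easy to see: fixing a surjection $V \twoheadrightarrow [j]$ gives a finitely generated quotient of $M([0])$ with $\dim_k W_n = j^n$. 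The actual proof of Theorem 7.1.5 in \cite{SS} does not go through free presentations at all: one passes to an initial submodule with respect to the Gr\"obner order, observes that its monomial basis is described by a regular language, and deduces that the Hilbert series is rational with poles only at $t = 1/j$ for $1 \le j \le d$; expanding that rational function yields $\sum_j p_j^W(n)\,j^n$ for $n \gg 0$. So the ``bookkeeping'' half is where the real combinatorial content lives, contrary to your assessment that it is routine once Noetherianity is in hand.
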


Prior to the provided source, the Noetherian property in the above theorem was proven for $\FI_d$-modules over a field of characteristic 0 in \cite[Theorem 2.3]{Sn}. The statement on dimensional stability was proven in the case where $k$ is a field of characteristic 0 as \cite[Theorem 3.1]{Sn}. The specific case of $\FI$-modules was also handled by Church, Ellenberg, Farb, and Nagpal in \cite{CEF} and \cite{CEFN}.\\

Specializing to the case where $k$ is a field of characteristic 0, we see that finitely generated $\FI_V$-modules can be examined from the perspective of the complex representation theory of the symmetric groups. Before we state our results, we first recall some notation.\\

\begin{definition}\label{padpar}
Recall that the irreducible representations of the symmetric group $\Sn_n$ over a field of characteristic 0 are in bijection with partitions $\lambda = (\lambda_1,\ldots,\lambda_h)$ of $n$. We will write $S^\lambda$ to denote this irreducible representation. If $\lambda$ is a partition of $n$ and $n_r \geq n_{d-1} \geq \ldots \geq n_1 \geq \lambda_1 + n$ is a sequence of integers, we define the \textbf{$r$-padded partition} $\lambda[n_1,\ldots,n_r] := (n_1-n,\ldots,n_r-n,\lambda_1,\ldots,\lambda_h)$. We will use $S(\lambda)_{n_1,\ldots,n_r} := S^{\lambda[n_1,\ldots,n_r]}$.\\
\end{definition}

\begin{theorem}[\cite{R2}, Theorems A and B Corollary 3.7]\label{genrepstab}
Assume that $|V| = d$. Let W be an $\FI_V$-module over a field $k$ of characteristic 0, and write $\phi^i_n:W_n \rightarrow W_{n+1}$ for the map induced by the pair of the standard inclusion $[n] \hookrightarrow [n+1]$ and the color $i$. Then $W$ is finitely generated if and only if $W_n$ is finite dimensional for all $n \geq 0$, and for all $n \gg 0$:
\begin{enumerate}
\item $\cap_i \ker\phi^i_n = \{0\}$;
\item $\sum_i \phi_n^i(W_n)$ spans $W_{n+1}$ as an $\Sn_{n+1}$-representation;
\item for any partition $\lambda$, and any integers $n_1 \geq \ldots \geq n_d \geq |\lambda| + \lambda_1$, let $c_{\lambda,n_1,\ldots,n_d}$ be the multiplicity of $S(\lambda)_{n_1,\ldots,n_d}$ in $W_{\sum_i n_i - (d-1)|\lambda|}$. Then the quantity $c_{\lambda,n_1+l,\ldots,n_d+l}$ is independent of $l$ for $l \gg 0$.\\
\end{enumerate}
Moreover, if $W$ is finitely generated, and $c_{\lambda,n}$ is the multiplicity of $S(\lambda)_n$ in $W_n$, then there exists a polynomial $p(x) \in \Q[x]$ of degree $\leq d-1$ such that for all $n \gg 0$, $c_{\lambda,n} = p(n)$. Finally, if $W$ is finitely generated then there exists an integer $b_W$, independent of $n$, such that if $S^\lambda$ is an irreducible representation appearing in $W_n$, then $\lambda$ has at most $b_W$ boxes below its $d$-th row.\\
\end{theorem}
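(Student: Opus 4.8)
The plan is to reduce the entire statement to an explicit understanding of the free modules $M(S)$ and then bootstrap to an arbitrary finitely generated $W$ using the Noetherianity supplied by Theorem \ref{noeth}. Writing $s = |S|$ and $d = |V|$, a morphism $[s] \to [n]$ in $\FI_V$ is an injection together with a coloring of the $n-s$ points outside its image by one of the $d$ colors, so that as an $\Sn_n$-representation
\[
M(S)_n \cong \bigoplus_{a_1 + \cdots + a_d = n-s} \Ind_{\Sn_s \times \Sn_{a_1} \times \cdots \times \Sn_{a_d}}^{\Sn_n} k,
\]
where $a_j$ records how many new points receive color $j$. Decomposing each induced trivial representation via the Pieri rule expresses the multiplicities of $M(S)_n$ entirely in terms of fixed Littlewood--Richardson and Kostka data attached to a bounded ``bottom shape'' together with a count of compositions $a_1 + \cdots + a_d = n - s$; this single computation drives every quantitative assertion below.

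I would first verify the three local conditions for free modules. The map $\phi^i_n$ sends a basis morphism to its post-composition with the colored inclusion that appends a point of color $i$, and since that appending morphism is a monomorphism of $\FI_V$, each $\phi^i_n$ is injective; hence $\cap_i \ker \phi^i_n = \{0\}$ for all $n$, giving condition (1). Up to the $\Sn_{n+1}$-action every morphism $[s] \to [n+1]$ is obtained by appending a colored last point to some $[s] \to [n]$, so $\sum_i \phi^i_n(M(S)_n)$ spans $M(S)_{n+1}$ as an $\Sn_{n+1}$-representation once $n \geq s$, giving condition (2). Condition (3) and the two ``moreover'' claims then fall out of the decomposition: fixing the bottom shape $\lambda$ and letting the padding parameters grow makes the relevant Pieri coefficients constant, so the padded multiplicities stabilize; the multiplicity of $S(\lambda)_n$ becomes a count of compositions of $n - s$ into $d$ parts weighted by fixed constants, hence a polynomial in $n$ of degree $\leq d-1$; and the bottom shape has at most $s$ boxes, so $b_{M(S)} = s$.

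Next I would pass to a general finitely generated $W$ by choosing a surjection $F = \bigoplus_j M(S_j) \twoheadrightarrow W$ whose kernel $K$ is again finitely generated by Theorem \ref{noeth}. Condition (2) and the box bound pass immediately to the quotient, and the polynomiality of multiplicities transports along the exact sequence $0 \to K \to F \to W \to 0$, since the multiplicities for $F$ and $K$ are already known to be eventually polynomial and $\Q$-valued functions that are eventually polynomial are closed under differences. For the ``if'' direction, finite-dimensionality in each degree together with the eventual spanning condition (2) forces $W$ to be generated by $\bigsqcup_{k \leq N} W_k$ for a suitable $N$, hence to be a quotient of a finite sum of free modules and therefore finitely generated.

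The hard part will be the remaining half of condition (1): showing $\cap_i \ker \phi^i_n = \{0\}$ for $n \gg 0$ when $W$ is merely a quotient of a free module. This can genuinely fail in low degrees, and the content is that the obstruction --- the largest submodule $T \subseteq W$ that is eventually annihilated by all colored composites, i.e. the finite-length or ``torsion'' part --- is supported in only finitely many degrees. This is exactly where Noetherianity does the real work: one argues that $T$ is itself finitely generated and killed by every sufficiently long colored morphism, so that $T_n = 0$ for $n \gg 0$, after which $W_n$ injects into a torsion-free stable quotient and the intersection of kernels vanishes. Establishing this finiteness of $T$, and controlling its interaction with all $d$ colors simultaneously rather than a single transition map as in the $\FI$ case, is the principal technical obstacle; once it is in hand, every other part of the theorem is the free-module computation propagated along the short exact sequence.
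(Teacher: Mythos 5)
This theorem is not proved in the present paper at all: it is quoted wholesale from \cite{R2} (Theorems A and B, Corollary 3.7), so your proposal can only be compared with the strategy of that cited source rather than with an argument in this text. Much of your skeleton is the standard and correct one: the explicit analysis of the free modules $M(S)$, the verification of conditions (1) and (2) for them, the ``if'' direction via finite dimensionality plus the eventual spanning condition, and the box bound passing to quotients are all fine. Your treatment of condition (1) for a general finitely generated $W$ --- the part you flag as hardest --- is in fact essentially completable as sketched: the submodule $T$ of elements eventually killed by all morphisms is finitely generated by Theorem \ref{noeth}, every morphism $[n] \rightarrow [m]$ in $\FI_V$ factors through a one-step colored inclusion (so $\cap_i \ker \phi^i_n \subseteq T_n$), and finitely many generators of $T$, each killed by all sufficiently long morphisms, force $T_n = 0$ for $n \gg 0$. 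One local error: your decomposition of $M(S)_n$ is wrong as stated. The $\Sn_n$-stabilizer of a basis morphism fixes the image of the injection \emph{pointwise}, so the summand attached to $(a_1,\ldots,a_d)$ is $\Ind_{\Sn_s \times \Sn_{a_1} \times \cdots \times \Sn_{a_d}}^{\Sn_n}\left(k[\Sn_s] \boxtimes k \boxtimes \cdots \boxtimes k\right)$, with the \emph{regular} representation of $\Sn_s$ rather than the trivial one; already for $s = 2$, $n = 2$, $d = 1$ your formula gives dimension $1$ where $M(S)_2 = k[\Sn_2]$ has dimension $2$. This is fixable and does not disturb the qualitative conclusions, since $k[\Sn_s]$ decomposes with fixed finite multiplicity data.

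The genuine gap is the passage from free modules to arbitrary finitely generated ones for the multiplicity statements, namely condition (3) and the eventual polynomiality of $c_{\lambda,n}$. You choose $0 \to K \to F \to W \to 0$ with $K$ finitely generated by Noetherianity, and then assert that ``the multiplicities for $F$ and $K$ are already known to be eventually polynomial.'' They are not: $K$ is finitely generated but not free, so eventual polynomiality and padded-multiplicity stability for $K$ are precisely instances of the theorem being proved, and no induction parameter decreases when you pass from $W$ to $K$ --- resolving $K$ in turn produces another finitely generated kernel, ad infinitum, and the closure of eventually polynomial functions under differences cannot launch this regress. This circularity is exactly the point at which the cited source \cite{R2} must invoke structural input about finitely generated $\FI_d$-modules going beyond bare Noetherianity (compare also the equivalent $GL$-equivariant perspective of \cite{SS2}); your proposal supplies no substitute for that step, so as written the forward implications for condition (3) and the two ``moreover'' claims hold only for free modules.
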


Having stated Theorems \ref{noeth} and \ref{genrepstab}, one sees that Theorems \ref{sinkexpstab} and \ref{sinkrepstab} will follow from the modules $H_q(\Conf^{\sink}_n(G,V);\Q)$ forming a finitely generated $\FI_V$-module. While we will need more machinery before we can prove finite generation, we can at least present this $\FI_V$-module structure now.\\

\begin{definition}\label{fivaction}
Let $(f,g):[m] \rightarrow [n]$ be a morphism in $\FI_V$. Then we define an induced morphism
\[
(f,g):\Conf^{\sink}_m(G,V) \rightarrow \Conf^{\sink}_n(G,V), (f,g)(x_1,\ldots,x_m)_i = \begin{cases} x_{f^{-1}(i)} & \text{ if $i \in f([m])$}\\ g(i) & \text{ otherwise.}\end{cases}
\]
These assignments define a functor $Conf_\dt(G,V)$ from $\FI_V$ to the category of topological spaces. For any fixed $q$, composition with the homology functor $H_q(\dt)$ therefore defines an $\FI_V$-module over $\Z$
\[
H_q(\Conf^{\sink}_\dt(G,V)):\FI_V \rightarrow \Z\Mod.
\]
\text{}\\
\end{definition}

We see that allowing collisions to happen at vertices enables us to introduce points into the sink configuration space in a continuous way. This ability seems to be quite rare in the study of configuration spaces in general. Notable cases in the classical setting where analogous maps exist are the configuration spaces of manifolds with boundary \cite{CEF}\cite{MW}, and manifolds with an everywhere non-vanishing vector field \cite{EW-G}.\\

\begin{remark}
One should note that given an injection $f:[n] \hookrightarrow [m]$, there is a forgetful map $f:\Conf^{\sink}_m(G,V) \rightarrow \Conf^{\sink}_n(G,V)$. Therefore the $\FI_V$-module $H_q(\Conf^{\sink}_\dt(G,V))$ actually carries the structure of an $(\FI_V \times \FI^{\op})$-module. This extra structure imposes some fairly rigid restrictions on the module $H_q(\Conf^{\sink}_\dt(G,V))$. For instance, it can be shown that this extra structure implies the dimension formula of Theorem \ref{sinkexpstab} will hold for all $n$, instead of for $n$ sufficiently large. In the case of trees this follows from Theorem \ref{recursive}. Showing that this is the case for general $(\FI_V \times \FI^{\op})$-modules is fairly technical, and requires results from \cite{SS2}. As we do not prove this strengthening of Theorem \ref{sinkexpstab} in this work, the statement of that theorem will remain unchanged.\\
\end{remark}

\subsection{Discrete Morse theory}\label{dmt}

The first step in proving that the $\FI_V$-module $H_q(\Conf^{\sink}_\dt(G,V))$ is finitely generated, as well as the first step in proving many topological results about the spaces $\Conf^{\sink}_n(G,V)$, involves placing a cellular model on $\Conf^{\sink}_n(G,V)$. After this is accomplished, we will refine the model using discrete Morse theory. Discrete Morse theory was introduced by Forman as a means of applying classical Morse theory techniques to the study of CW-complexes \cite{Fo1}\cite{Fo2}. The exposition that follows is based on the exposition in \cite{R1}, which was inspired by the exposition in the works \cite{FS}\cite{Fo2}\cite{KP}.\\

\begin{definition}\label{dmtmaindefs}
Let $Y$ be a CW complex. A \textbf{cell} of $Y$ will always refer to an open cell in $Y$. Given a cell $\sigma$ of dimension $i$, we will often write $\sigma^{(i)}$ to indicate that $\sigma$ has dimension $i$. We will write $\K$ to denote the set of cells of $Y$ and $\K_i$ to denote the set of $i$-cells of $Y$.\\

A cell $\tau^{(i)} \subseteq \overline{\sigma^{i+1}}$ is said to be a \textbf{regular face} of a cell $\sigma^{(i+1)}$ if, given a characteristic map $\Phi_\sigma:D^{i+1} \rightarrow Y$ for $\sigma^{(i+1)}$, $\Phi_{\sigma}^{-1}(\tau)$ is a closed ball, and the map $\Phi_{\sigma}|_{\Phi_\sigma^{-1}(\tau)}$ is a homeomorphism.\\

A \textbf{discrete vector field} $X$ on $Y$ is a collection of partially defined functions $X_i:\K_i \rightarrow \K_{i+1}$ satisfying the following three conditions for each $i$:
\begin{enumerate}
\item $X_i$ is injective;
\item the image of $X_i$ is disjoint from the domain of $X_{i+1}$;
\item for any $\sigma^{(i)}$ in the domain of $X_i$, $\sigma^{(i)}$ is a regular face of $X_i(\sigma^{(i)})$.\\
\end{enumerate}

Given a CW complex $Y$ equipped with a discrete vector field $X$, a \textbf{cellular path} between two cells $\alpha^{(i)}$ and $\beta^{(i)}$ is a finite sequence of $i$-cells
\[
\alpha^{(i)} = \alpha_0^{(i)}, \alpha_1^{(i)}, \ldots, \alpha_{l-1}^{(i)}, \alpha_l^{(i)} = \beta^{(i)}
\]
such that $\alpha_{j+1}^{(i)}$ is a face of $X_i(\alpha_{j}^{(i)})$. We say that the path is \textbf{closed} if $\alpha^{(i)} = \beta^{(i)}$, and we say it is \textbf{trivial} if $\alpha_j^{(i)} = \alpha_{k}^{(i)}$ for all $j,k$.\\

A discrete vector field $V$ is said to be a \textbf{discrete gradient vector field} if it admits no non-trivial closed cellular paths.\\

If $X$ is a discrete gradient vector field on a CW complex $X$, then we call a cell $\sigma$ of $X$ \textbf{redundant} if $\sigma$ is in the domain of $X_i$ for some $i$, \textbf{collapsible} if it is in the image of $X_i$ for some $i$, and \textbf{critical} otherwise.\\
\end{definition}

To get a better intuition for the above definitions, one might also consider the following (equivalent) formulation. Let $f:\K \rightarrow \R$ be a map such that for all $\sigma \in \K_i$,
\begin{enumerate}
\item $|\{\tau \in \K_{i+1} \mid \sigma \subseteq \overline{\tau} \text{ and } f(\sigma) \geq f(\tau)\}| \leq 1$; \label{dm1}
\item $|\{\tau \in \K_{i-1} \mid \tau \subseteq \overline{\sigma} \text{ and } f(\sigma) \leq f(\tau)\}| \leq 1$. \label{dm2}
\end{enumerate}
One calls $f$ a discrete Morse function, and it can be thought of as being analogous to a Morse function in the classical sense. Cells for which \ref{dm1} is an equality are redundant, for which \ref{dm2} is an equality are collapsible, and for which both are not equalities are critical. It is a fact \cite{Fo1} that these are actually the only three cases. One immediately notes that according to this formulation, the classification of cells does not fully depend on the explicit values of the function $f$. In other words, postcomposition with any monotone function on $\R$ does not change the resulting classification of cells. One then constructs a discrete vector field by assigning a cell $\sigma \in \K_i$ to the (unique) cell $\tau \in \K_{i+1}$ containing $\sigma$ for which $f(\sigma) \geq f(\tau)$. The fact that this discrete vector field is actually a discrete gradient vector field, i.e. the fact that it admits no non-trivial closed cellular paths, is exhibited in \cite{Fo1} and \cite{Fo2}.\\

Just as with classical Morse theory, it is the case that the critical cells of a discrete gradient vector field $X$ on a space $Y$ determine the whole of $Y$.\\

\begin{proposition}[\cite{FS} Proposition 2.2, \cite{Fo1} Theorem 3.4]\label{VFdecomp}
Let $Y$ be a CW complex equipped with a discrete gradient vector field $X$. Consider the filtration
\[
\emptyset = Y_0'' \subseteq Y_0' \subseteq Y_1'' \subseteq Y_1' \subseteq \ldots \subseteq Y_n'' \subseteq Y_n' \subseteq \ldots
\]
where $Y_i'$ is the $i$-skeleton of $Y$ with the redundant cells removed, and $Y_i''$ is the $i$-skeleton of $Y$ with both the redundant and critical cells removed. Then:
\begin{enumerate}
\item For any $i$, $Y'_i$ is obtained from $Y''_i$ by attaching $m_i$ $i$-cells to $Y_n''$ along their boundaries, where $m_i$ is the number of critical $i$-cells of the discrete gradient vector field $X$.
\item For any $i$, $Y_{i+1}''$ deformation retracts onto $Y_i'$.\\
\end{enumerate}
\end{proposition}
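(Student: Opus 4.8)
The plan is to read both parts off the matching structure that $X$ imposes on the cells of $Y$, and then to reduce Part (2) to a sequence of elementary collapses whose compatibility is controlled by the acyclicity of $X$. First I would record the basic bookkeeping. By definition, a discrete gradient vector field pairs each redundant cell $\sigma^{(i)}$ (a cell in the domain of $X_i$) with the collapsible cell $X_i(\sigma^{(i)}) \in \K_{i+1}$, of which $\sigma$ is a regular face; the critical cells are exactly those left unpaired, and a redundant cell always has dimension one less than its partner. Comparing consecutive terms of the filtration at the level of open cells then gives
\[
Y_i' \setminus Y_i'' = \{\text{critical } i\text{-cells}\}, \qquad Y_{i+1}'' \setminus Y_i' = \{\text{collapsible } (i+1)\text{-cells}\}.
\]

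Part (1) is then essentially a matter of unwinding the definitions. Each critical $i$-cell has an attaching map carrying $\partial D^i$ into the $(i-1)$-skeleton of $Y$; once the collapses accounted for by Part (2) in lower degrees have pushed the boundary off the removed (redundant) cells, the image of that attaching map lies in $Y_i''$, so passing from $Y_i''$ to $Y_i'$ is precisely the operation of attaching the $m_i$ critical $i$-cells along their boundaries, as claimed.

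The content is in Part (2). The cells of $Y_{i+1}''$ not already in $Y_i'$ are exactly the collapsible $(i+1)$-cells $\tau$, and each such $\tau$ carries its matched face $\sigma = X_i^{-1}(\tau)$, a redundant $i$-cell that is absent from both $Y_i'$ and $Y_{i+1}''$. Thus, for a single pair, $Y_{i+1}''$ looks locally like $Y_i'$ with the open cell $\tau$ glued in but with the face $\sigma$ deleted. Regularity of the face, which is condition (3) in the definition of a discrete vector field, is exactly what supplies the elementary collapse: $\overline{\tau} \setminus (\sigma \cup \tau)$ is a deformation retract of $\overline{\tau} \setminus \sigma$, the retraction pushing inward across the open cells $\tau$ and $\sigma$. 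Performing this collapse for every collapsible $(i+1)$-cell should produce the desired deformation retraction of $Y_{i+1}''$ onto $Y_i'$.

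The main obstacle, and where I expect the real work to sit, is that these elementary collapses need not be independent: the free face $\sigma$ of one pair may also be a face of a different collapsible cell $\tau'$, so naively deleting $\sigma$ interferes with the collapse of $\tau'$. This is precisely where the hypothesis that $X$ admits no nontrivial closed cellular path is used. I would argue that acyclicity lets one partially order the pairs $\{(\sigma,\tau)\}$ at this level so that the collapses can be carried out one at a time, each a genuine elementary collapse at its stage, and then compose the resulting deformation retractions. Because $Y$ is a CW complex the composite is locally finite, so it assembles into a well-defined continuous homotopy. The delicate points still to be checked are that cellular-path acyclicity really yields a consistent ordering and that the (possibly infinite) composite homotopy is well defined; for these I would follow the argument of Forman \cite{Fo1}.
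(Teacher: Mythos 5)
Your overall strategy is the right one, and in fact the paper itself gives no proof of this proposition (it is quoted from \cite{FS} and \cite{Fo1}), so the benchmark is Forman's standard argument, which your outline (elementary collapses, ordered by acyclicity of $X$) correctly reconstructs. However, your bookkeeping rests on a misreading of the filtration, and as written it breaks the proof. The only interpretation under which the displayed chain is actually an increasing filtration is that $Y_i'$ and $Y_i''$ delete only the top-dimensional cells: $Y_i'$ is the $(i-1)$-skeleton together with all collapsible and critical $i$-cells, and $Y_i''$ is the $(i-1)$-skeleton together with the collapsible $i$-cells. Under your reading, in which redundant and critical cells of \emph{all} dimensions are removed, the containment $Y_0' \subseteq Y_1''$ already fails: $Y_0'$ consists of the critical $0$-cells (no collapsible $0$-cells exist), and these are deleted from your $Y_1''$, which would then be a disjoint union of open collapsible $1$-cells and could not possibly deformation retract onto $Y_0'$. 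Consequently your second displayed identity is wrong: $Y_{i+1}'' \setminus Y_i'$ consists of the collapsible $(i+1)$-cells \emph{together with the redundant $i$-cells}, and your assertion that the matched face $\sigma = X_i^{-1}(\tau)$ is ``absent from both $Y_i'$ and $Y_{i+1}''$'' is false --- $Y_{i+1}''$ contains the entire $i$-skeleton, hence contains $\sigma$.

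This changes the local move in Part (2): the correct step is the genuine elementary collapse of the pair, deformation retracting $\overline{\tau}$ onto $\overline{\tau} \setminus (\sigma \cup \tau)$ by sweeping the open cell $\sigma$ inward across $\tau$ --- this is exactly what removes the redundant $i$-cells in passing from $Y_{i+1}''$ to $Y_i'$ --- rather than your retraction of $\overline{\tau} \setminus \sigma$ onto $\partial\tau \setminus \sigma$, which presupposes $\sigma$ is already gone and provides no mechanism for deleting it. Your diagnosis of where the real work lies is nonetheless accurate and agrees with \cite{Fo1}: $\sigma$ need not be a free face, since it may bound several collapsible $(i+1)$-cells, and the absence of nontrivial closed cellular paths is precisely what makes the relation ``collapse $(\sigma'', X_i(\sigma''))$ before $(\sigma, X_i(\sigma))$ whenever $\sigma$ is a face of $X_i(\sigma'')$'' acyclic, so the collapses can be performed in an order making each one elementary at its stage. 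Your worry about an infinite composite is moot for the finite complexes $\DConf_n^{\sink}(G,V)$ to which the paper applies the proposition. Finally, note that under the correct reading Part (1) is immediate --- the attaching map of a critical $i$-cell lands in the $(i-1)$-skeleton, which is contained in $Y_i''$ --- so your argument about first pushing boundaries off removed redundant cells is unnecessary, and its presence is a symptom of the misread filtration.
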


The above proposition leads to one notable corollary, which we record now.\\

\begin{corollary}[\cite{FS} Proposition 2.3, \cite{Fo1} Corollary 3.5]\label{critdecomp}
Let $Y$ be a CW complex equipped with a discrete gradient vector field $X$. Then $Y$ is homotopy equivalent to a CW complex with precisely $m_i$ $i$-cells for each $i$, where $m_i$ is the number of critical $i$-cells of $X$.\\
\end{corollary}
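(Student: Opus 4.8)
The plan is to prove the corollary by iterating Proposition \ref{VFdecomp}: using its two properties I will build, one dimension at a time, a CW complex $Z$ whose $i$-cells are in bijection with the critical $i$-cells of $X$, together with a homotopy equivalence $Z \simeq Y$. The construction is by induction on $i$, producing CW complexes $Z_i$ that have exactly $m_j$ cells of dimension $j$ for each $j \le i$, equipped with homotopy equivalences $f_i : Z_i \xrightarrow{\simeq} Y'_i$.

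For the base case, since $Y''_0 = \emptyset$, part (1) of Proposition \ref{VFdecomp} identifies $Y'_0$ with a disjoint union of $m_0$ points, so I take $Z_0 = Y'_0$ and $f_0 = \mathrm{id}$. For the inductive step, suppose $f_i : Z_i \xrightarrow{\simeq} Y'_i$ has been built. Part (2) says the inclusion $Y'_i \hookrightarrow Y''_{i+1}$ is a homotopy equivalence, since it admits the stated deformation retraction as a homotopy inverse; composing with $f_i$ gives a homotopy equivalence $g_i : Z_i \xrightarrow{\simeq} Y''_{i+1}$, and I fix a homotopy inverse $G_i : Y''_{i+1} \to Z_i$. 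By part (1), $Y'_{i+1}$ is obtained from $Y''_{i+1}$ by attaching $m_{i+1}$ cells of dimension $i+1$ along maps $\Phi_\alpha : S^i \to Y''_{i+1}$. I then set $Z_{i+1} = Z_i \cup_\psi \bigsqcup_{\alpha=1}^{m_{i+1}} D^{i+1}$, where the transported attaching maps are $\psi_\alpha = G_i \circ \Phi_\alpha$; by construction $Z_{i+1}$ adds exactly $m_{i+1}$ cells in dimension $i+1$ and no others.

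The real content is to promote $g_i$ to a homotopy equivalence $f_{i+1} : Z_{i+1} \xrightarrow{\simeq} Y'_{i+1}$. For this I would invoke the standard fact that adjunction spaces are homotopy invariant in the base: if $g : A \to B$ is a homotopy equivalence with inverse $G$, and cells are attached to $B$ along maps $\Phi_\alpha$, then attaching cells to $A$ along $G \circ \Phi_\alpha$ yields a homotopy-equivalent space, because $S^i \hookrightarrow D^{i+1}$ is a cofibration and $g \circ G \circ \Phi_\alpha \simeq \Phi_\alpha$. Applying this with $A = Z_i$ and $B = Y''_{i+1}$ produces $f_{i+1}$. This gluing lemma is the step I expect to be the main obstacle: one must choose the homotopy equivalence so that it restricts (up to homotopy) to $g_i$ on the subcomplex $Z_i$, making the $f_i$ compatible along the inclusions $Z_i \hookrightarrow Z_{i+1}$. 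This compatibility is exactly what the mapping-cylinder and NDR-pair formalism underlying Proposition \ref{VFdecomp} in \cite{Fo1}\cite{FS} is designed to supply.

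Finally I would pass to the colimit $Z = \bigcup_i Z_i$, which by construction is a CW complex with precisely $m_i$ cells in each dimension $i$. Since the inclusions $Z_i \hookrightarrow Z_{i+1}$ are inclusions of subcomplexes, hence cofibrations, and the $f_i$ are compatible homotopy equivalences, the induced map $Z \to \bigcup_i Y'_i$ is a homotopy equivalence; as the filtration of Proposition \ref{VFdecomp} exhausts $Y$ up to the collapses it records, this target is homotopy equivalent to $Y$, giving $Z \simeq Y$. In the case of interest the complex $Y$ is finite-dimensional by Theorem \ref{fcm}, so the induction terminates after finitely many steps and the colimit is a finite union, which removes any point-set subtlety in this last step.
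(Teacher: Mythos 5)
Your argument is correct and follows exactly the route the paper intends: the paper states this corollary without proof, citing \cite{Fo1} and \cite{FS}, as an immediate consequence of Proposition \ref{VFdecomp}, and your inductive transport of attaching maps across the deformation retractions $Y''_{i+1} \simeq Y'_i$ (via the standard gluing lemma for adjunction spaces) is precisely the standard derivation given in those sources. Your handling of the two subtle points --- that $Z_i$ is $i$-dimensional so the transported attaching maps automatically yield a genuine CW structure, and that finite-dimensionality terminates the induction --- is sound, so there is nothing to correct.
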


Discrete Morse theory can therefore be thought of as a means of refining a cellular complex by removing and otherwise collapsing cells which are homotopically inessential. Just as is the case with traditional Morse theory, the decomposition of the space $Y$ given by Corollary \ref{critdecomp} can be used to compute the homology groups of $Y$. For simplicity, we will state the construction for cubical complexes, although the general case is similar. In the next section we will see that if $(G,V,E)$ is loopless, then $\Conf^{\sink}_n(G,V)$ is homotopy equivalent to a cubical complex, so this case is particularly relevant for what follows.\\

\begin{definition}\label{morsediff}
Let $Y$ be a cubical complex, and write $C_i(Y)$ for the free abelian group of $i$-cells of $Y$. If $c \cong I_1 \times \ldots \times I_i$ is an $i$-cell of $Y$, with $I_j \cong [0,1]$, then we define
\[
c^j_{\tau} = I_1 \times \ldots \times I_{j-1} \times \{0\} \times I_{j+1} \times \ldots \times I_i, \hspace{.5cm} c^j_{\iota} = I_1 \times \ldots \times I_{j-1} \times \{1\} \times I_{j+1} \times \ldots \times I_i.
\]
This allows us to define a boundary morphism
\[
\partial: C_i(Y) \rightarrow C_{i-1}(Y)
\]
given by
\[
\partial(c) := \sum_j c^j_\iota - c^j_\tau,
\]
turning $C_\dt(Y)$ into a chain complex. It is a well known fact that the homology of this chain complex is the usual homology of the space $Y$.\\

Further assume that $Y$ is equipped with a discrete gradient vector field $X$. Then we have a map $R:C_i(Y) \rightarrow C_i(Y)$ defined by
\[
R(c) = \begin{cases} 0 &\text{ if $c$ is collapsible}\\ c &\text{ if $c$ is critical}\\ \pm \partial(X_i(c)) + c &\text{ otherwise,}\end{cases}
\]
where the sign of $\partial(X_i(c))$ in the above definition is chosen so that $c$ has a negative coefficient. The property that $X$ has no non-trivial closed paths implies that $R^m(c) = R^{m+1}(c)$ for all $m \gg 0$ and all $i$-cells $c$ \cite{Fo1}. We set $R^\infty(c)$ to be this stable value.\\

For each $i$, let $\M_i$ denote the free abelian group with basis indexed by the critical $i$-cells of $X$. Then the \textbf{Morse complex} associated to $X$ is defined to be
\[
\M_\dt : \ldots \rightarrow \M_n \rightarrow \ldots \rightarrow \M_1 \stackrel{\widetilde{\partial}}\rightarrow \M_0 \rightarrow 0,
\]
where boundary map $\widetilde{\partial}$ is given by
\[
\widetilde{\partial}(c) := R^\infty(\partial(c))
\]
The map $\widetilde{\partial}$ is known as the \textbf{Morse differential}.\\
\end{definition}

\begin{theorem}[Theorem 8.2 \cite{Fo1}, Theorem 7.3 \cite{Fo2}]\label{morsediffcomp}
For all $i$ there are isomorphisms,
\[
H_i(X) \cong H_i(\M_\dt)
\]
\text{}\\
\end{theorem}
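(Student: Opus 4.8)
The plan is to exhibit the Morse complex $\M_\dt$ as a chain-homotopy retract of the cellular chain complex $C_\dt(Y)$, where $Y$ is the CW complex carrying the gradient field $X$ (so that $H_i(X)$ in the statement means $H_i(Y)$). Since $H_i(C_\dt(Y)) = H_i(Y)$ by the standard identification recorded in Definition \ref{morsediff}, a chain-homotopy equivalence $C_\dt(Y) \simeq \M_\dt$ yields the desired isomorphism on homology. The central object is the reduction operator $R:C_\dt(Y)\to C_\dt(Y)$ of Definition \ref{morsediff} together with its stabilization $R^\infty$.

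First I would verify that $R$ is a chain map, i.e. $R\circ\partial = \partial\circ R$. This is a cell-by-cell computation splitting into the three cases redundant, collapsible, and critical. The inputs are $\partial^2 = 0$, conditions (1)--(2) in the definition of a discrete vector field (which guarantee that $X$ is a partial matching, so the classification of cells is unambiguous), and the regular-face condition (3) (which ensures that $c$ occurs with coefficient $\pm 1$ in $\partial(X_i(c))$, so that the sign in $R(c)=\pm\partial(X_i(c))+c$ is well defined). Next I would confirm that $R^\infty$ is well defined: the hypothesis that $X$ admits no nontrivial closed cellular path, i.e. that $X$ is a gradient rather than merely a vector field, forces the sequence $R^m(c)$ to stabilize after finitely many steps, as asserted in Definition \ref{morsediff}. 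Because $R$ is a chain map, so is $R^\infty$.

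The heart of the argument is to identify the image of $R^\infty$ with the Morse complex. One shows that $R^\infty$ is a projection, $R^\infty\circ R^\infty = R^\infty$, whose image is precisely the subgroup generated by the critical cells, that is $\M_\dt$ as a graded group, and that under this identification the induced differential agrees with the Morse differential $\widetilde{\partial}=R^\infty\circ\partial$ of Definition \ref{morsediff}. Writing $\iota:\M_\dt\hookrightarrow C_\dt(Y)$ for the inclusion of critical cells, one then has $R^\infty\circ\iota=\mathrm{id}_{\M_\dt}$, so it remains to produce a chain homotopy between $\iota\circ R^\infty$ and $\mathrm{id}_{C_\dt(Y)}$. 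I would build this homotopy from the partial pairing $X$ itself, extended linearly to a degree $+1$ operator on $C_\dt(Y)$: telescoping the identity $R = \mathrm{id} \pm (\partial X + X\partial)$ along the flow lines, which are finite by acyclicity, produces an operator $\Theta$ with $\mathrm{id} - \iota\circ R^\infty = \partial\Theta + \Theta\partial$.

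I expect the main obstacle to be the sign bookkeeping together with the finiteness argument that makes $R^\infty$ and the telescoping homotopy converge. Concretely, one must check that no nontrivial closed cellular path lets the reduction cycle indefinitely, so that the relation on cells of a fixed dimension induced by the gradient is genuinely acyclic and every flow line terminates at a critical cell; this is exactly the role of the gradient (as opposed to merely vector-field) hypothesis, and it is what turns the formal algebraic manipulations into convergent ones. As a cross-check, the topological decomposition of Proposition \ref{VFdecomp} and Corollary \ref{critdecomp} already shows that $Y$ is homotopy equivalent to a CW complex with exactly $m_i$ cells in dimension $i$, which forces the chain groups $\M_i$ to have the correct ranks; the content of the present theorem beyond that corollary is the explicit identification of the attaching maps, i.e. of the boundary operator, with $\widetilde{\partial}$, and the chain-homotopy argument above is precisely what supplies that identification.
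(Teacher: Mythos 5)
Your plan is, in outline, Forman's own argument---note that the paper does not prove this theorem but imports it from \cite{Fo1}\cite{Fo2}, and the proof there is exactly a chain-homotopy retraction of $C_\dt(Y)$ onto the Morse complex, with convergence supplied by acyclicity---but the operator you run it with is wrong, and the very first step, verifying that $R$ is a chain map, would fail. The reduction $R$ of Definition \ref{morsediff} does not satisfy $R = \mathrm{id} \pm (\partial X + X\partial)$: on a redundant cell it equals $c \pm \partial X(c)$ with no $X\partial$ term, on a collapsible cell it equals $0$ rather than $c + X\partial(c)$, and on a critical cell it equals $c$ even though $X\partial(c)$ need not vanish. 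In fact $R$ is not a chain map. Take $Y$ to be the circle with two vertices $a,b$ and two edges $e_1,e_2$, both oriented from $a$ to $b$, and let $X(a) = e_1$, so that $b$ and $e_2$ are critical. Then $R(a) = (b-a) + a = b$, hence $R\partial(e_2) = R(b-a) = b - b = 0$, while $\partial R(e_2) = \partial e_2 = b - a \neq 0$; the same computation shows $R^\infty$ fails to commute with $\partial$. Consequently no homotopy $\Theta$ with $\mathrm{id} - \iota R^\infty = \partial\Theta + \Theta\partial$ can exist: any map of the form $\partial\Theta + \Theta\partial$ automatically commutes with $\partial$, so such an identity would force $\iota R^\infty$ to be a chain map, which it is not. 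A related slip: for the operator that does work, the invariant chains are not spanned by the critical cells themselves---in the example the invariant $1$-chain is $e_2 - e_1$, not $e_2$---they are only isomorphic to $\M_\dt$ via the projection $\pi$ onto critical cells.

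The repair is the one Forman makes. Extend $X$ to a degree $+1$ operator on chains, normalized so that $c$ appears in $\partial X(c)$ with coefficient $-1$, and vanishing on collapsible and critical cells (so $X \circ X = 0$), and set $\Phi = \mathrm{id} + \partial X + X\partial$. Then $\Phi\partial = \partial + \partial X \partial = \partial\Phi$, so $\Phi$ genuinely is a chain map; the absence of nontrivial closed cellular paths makes $\Phi^m$ stabilize to $\Phi^\infty$; and your telescoping then goes through verbatim, since $\Phi^N - \mathrm{id} = \sum_{k<N}(\partial X + X\partial)\Phi^k = \partial\Theta + \Theta\partial$ with $\Theta = X(\mathrm{id} + \Phi + \dots + \Phi^{N-1})$, using that each $\Phi^k$ commutes with $\partial$. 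The image of $\Phi^\infty$ is the subcomplex of $\Phi$-invariant chains, $\pi$ restricts to an isomorphism of graded groups from it onto $\M_\dt$, and transporting $\partial$ across this isomorphism yields exactly $\widetilde{\partial} = R^\infty\partial$ on critical cells; this last identification is a genuine verification rather than a definition, and it is also where one sees that $\widetilde{\partial}^2 = 0$, which your outline implicitly takes for granted even though $R^\infty$ is not a chain map. With $\Phi$ in place of $R$ throughout, every other step of your proposal is sound and coincides with the cited proof.
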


Farley and Sabalka introduced a discrete gradient vector field for the configuration spaces of graphs in \cite{FS}. The discrete gradient vector field that we construct for the spaces $\Conf^{\sink}_n(G,V)$ will be heavily inspired by that work (see Definition \ref{sinkdgvf}).\\

\section{Cellular models for $\Conf_n(G,V)$}

For the remainder of this section, we fix a graph $(G,V,E)$

\subsection{The first model: $\DConf^{\sink}_n(G,V)$} \label{dmtgraph}

The first cellular model we place on the sink configuration spaces of $(G,V,E)$ was first considered by Chettih and L\"utgehetmann in \cite{CL}. In that work, the model is explicitly constructed in the cases of $([0,1],\{0,1\},\{(0,1)\})$ and $(S^1,\{0\},\{(0,1)\})$. We will see that very little work is necessary to extend these two cases to arbitrary graphs.\\

\begin{definition}
We define the \textbf{discrete $n$ strand sink configuration space of (G,V,E)} as the subcomplex of $G^n$
\[
\DConf^{\sink}_n(G,V) := \cup \{\sigma_1 \times \ldots \times \sigma_n\}
\]
where the union is over all cells $\sigma_1 \times \ldots \times \sigma_n \subseteq G^n$ such that $\sigma_i \neq \sigma_j$ whenever $\sigma_i$ and $\sigma_j$ are both 1-cells of $G$.\\

Given a cell $\sigma = \sigma_1 \times \ldots \times \sigma_n$, if $\sigma_j$ is an edge (or vertex) of $G$ then we call $j$ the \textbf{index of a coordinate associated to $\sigma_j$}. We will also sometimes say that the coordinate $j$ \textbf{occupies} the edge (or vertex) $\sigma_j$. If $\sigma_j$ is an edge (or vertex) of $G$, then we say that $\sigma$ \textbf{contains} the edge (or vertex) $\sigma_j$.\\
\end{definition}

One should think of $\DConf^{\sink}_n(G,V)$ as $n$-tuples of points of $G$ such that every point is either located on a vertex of $G$, or alone on the interior of an edge of $G$. It is useful to visualize the cells of $\DConf^{\sink}_n(G,V)$ on a drawing of the graph $(G,V,E)$. For instance, if $(G,V,E)$ is the graph which looks like the letter ``Y,'' with the minimal number of vertices, then Figure \ref{cellexample} illustrates a 1-cell of $\DConf_6^{\sink}(G,V)$.\\

As an interesting case to consider, let $(G,V,E)$ be the interval with two vertices. Then the space $\DConf_n^{\sink}(G,V)$ is seen to be the 1-skeleton of the $n$-dimensional hypercube.\\

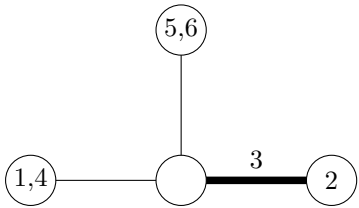
\begin{figure}
\begin{tikzpicture}

\draw [line width=1mm] (2,0) -- node[above] {3} (4,0);

\tikzstyle{every node}=[draw,circle,fill=white,minimum size=19pt,
                            inner sep=0pt]
    \draw (0,0) node (0)  {1,4}
					--(2,0) node (2) {}
					--(2,2) node (4) {5,6}
    			(2) -- (4,0) node (7) {2};
		 
\end{tikzpicture}
\caption{A visualization of a 1-cell of $\DConf_6^{\sink}(G,V)$. In this visualization, a vertex is filled in by the indices of the coordinates which occupy it. The interior of an edge is made bold if it is being occupied by a coordinate, and this (unique) coordinate is written above.}\label{cellexample}
\end{figure}

Note that $\DConf^{\sink}_n(G,V)$ also naturally embeds into the usual sink configuration space. The main result of this section is the following.\\

\begin{theorem}[\cite{CL}, Proposition 2.2]
The inclusion $\iota_n:\DConf^{\sink}_n(G,V) \hookrightarrow \Conf^{\sink}_n(G,V)$ is a homotopy equivalence.
\end{theorem}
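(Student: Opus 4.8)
The plan is to establish the homotopy equivalence $\iota_n : \DConf_n^{\sink}(G,V) \hookrightarrow \Conf_n^{\sink}(G,V)$ by constructing an explicit deformation retraction, mirroring the strategy Abrams used for the classical discretized configuration spaces and that Chettih--L\"utgehetmann carried out in the two special cases of the interval and the circle. The essential idea is that a point of $\Conf_n^{\sink}(G,V)$ is an $n$-tuple of points of $G$, each either on a vertex or in the interior of an edge, with the only constraint being that no two distinct coordinates may occupy the same interior-of-edge location. The cells of $\DConf_n^{\sink}(G,V)$ are exactly the products $\sigma_1 \times \cdots \times \sigma_n$ in which no two coordinates lie on the same $1$-cell. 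The discretized space fails to capture a generic configuration only because several coordinates may be crowded together in the interior of a single edge (occupying distinct \emph{points} of that edge, which is legal in $\Conf_n^{\sink}$ but not realized by a single cell of $\DConf_n^{\sink}$). The retraction must push such crowded coordinates onto the two endpoints of the edge, sliding them monotonically toward whichever vertex is nearer, thereby spreading them out so that at most one coordinate remains in each edge interior.

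\textbf{The key steps, in order.} First I would give each edge $e \in E$ a fixed orientation and an identification $e \cong [0,1]$, so that every coordinate sitting in the interior of $e$ has a well-defined parameter in $(0,1)$. Second, for a configuration in $\Conf_n^{\sink}(G,V)$, I would consider an edge occupied by several coordinates at distinct parameters $0 < t_1 < \cdots < t_k < 1$ and define a flow that simultaneously slides the coordinates with parameter $\leq 1/2$ monotonically down toward the endpoint $0$ and those with parameter $> 1/2$ up toward the endpoint $1$, at speeds chosen so that their relative order is preserved and no collision occurs along an edge interior during the motion. Because collisions are permitted on vertices, the coordinates are allowed to pile up at the two endpoints at the end of the flow. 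Third, I would verify that this prescription is continuous and well-defined across the whole space: the motion on each edge depends continuously on the positions of the coordinates occupying it, and coordinates already on vertices simply stay fixed. Fourth, I would check that at time $1$ every edge interior is vacated except possibly for a single surviving coordinate, landing the configuration in $\DConf_n^{\sink}(G,V)$, and that the flow restricts to the identity on $\DConf_n^{\sink}(G,V)$ throughout, so that $\iota_n$ followed by the retraction is homotopic to the identity and the composite is the identity on the subcomplex. This exhibits a strong deformation retraction and hence the desired homotopy equivalence.

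\textbf{The main obstacle} I expect is making the sliding flow genuinely continuous \emph{globally}, not merely edge-by-edge, and in particular handling the boundary behavior cleanly. The subtle point is that the number of coordinates occupying a given edge, and the number on each side of the midpoint, can jump discontinuously as a configuration is perturbed (a coordinate sitting exactly at parameter $1/2$, or sitting exactly on a vertex, is a degenerate case), so the naive ``push toward the nearer endpoint'' rule is not obviously continuous near these loci. The standard remedy, which I would adopt, is to define the flow so that coordinates move toward an endpoint at a rate that vanishes as they approach the midpoint and as they approach the endpoints, so that the degenerate parameter value $1/2$ is a fixed point of the flow and no discontinuity arises; one must then confirm that the limit configuration still has each edge interior occupied by at most one coordinate. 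A second, more bookkeeping-level obstacle is verifying that the retraction lands in $\DConf_n^{\sink}(G,V)$ as a \emph{subcomplex} of $G^n$ rather than merely in the underlying space, i.e.\ that the surviving occupied edges are genuinely distinct across coordinates; this follows because at most one coordinate per edge survives the spreading, but it deserves an explicit check. Since the two special cases in \cite{CL} already treat the interval and the circle by essentially this mechanism, I would lean on the local nature of the construction---the flow operates independently on each edge---to reduce the general graph to these building blocks, which is why the authors note that ``very little work'' is needed beyond the cited cases.
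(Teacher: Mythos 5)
Your global strategy --- reduce to one edge at a time, using that points already on vertices are never moved --- is exactly the paper's strategy, which simply cites \cite{CL} for the interval and circle and then applies the retraction edge-by-edge. The gap is in your attempt to reprove the base case: the ``push toward the nearer endpoint'' flow, split at the fixed midpoint $1/2$, is genuinely discontinuous, and the remedy you propose does not repair it. Concretely, take two points on an edge at parameters $x < 1/2$ and $y = 1/2 \pm \epsilon$. For $y = 1/2+\epsilon$ the end configuration is $(0,1)$; for $y = 1/2-\epsilon$ it is $(0,0)$; so the time-one map jumps along the locus $\{y = 1/2\}$ no matter how the speeds are profiled. Making the speed vanish at $1/2$ forces a choice between two failures: if the homotopy runs for finite time, points near $1/2$ have not reached the endpoints at time one, so the map does not land in $\DConf_n^{\sink}(G,V)$ at all; if you compress an infinite-time flow into $[0,1]$ and take limits, the limit map sends $y=1/2$ to $1/2$ but $y = 1/2\pm\epsilon$ to $1$ and $0$ respectively, which is again discontinuous. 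A per-point rule (where the target endpoint of a coordinate depends only on that coordinate's own parameter) cannot work: continuity would force two points at $1/2-\epsilon$ and $1/2+\epsilon$ to both end near $1/2$, violating the at-most-one-interior-point condition. Separately, your claim that the flow ``restricts to the identity on $\DConf_n^{\sink}(G,V)$ throughout'' fails as stated: a solitary coordinate at parameter $0.3$ on an edge is a point of $\DConf_n^{\sink}(G,V)$, yet your rule slides it to $0$; and patching this by only moving points when an edge is multiply occupied reintroduces a discontinuity in the occupancy count.

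The fix is to make the splitting depend on the whole configuration on the edge rather than on a fixed midpoint --- for instance, for the triangle $\{0 \le x \le y \le 1\}$ minus the open diagonal (two ordered points on an edge), radial projection away from the diagonal onto the legs $\{x=0\} \cup \{y=1\}$ is a strong deformation retraction, and here the target of $y$ depends on $x$. This coupling is exactly what the intermediate cellular model of \cite{L1}, used in the proof of \cite[Proposition 2.2]{CL}, provides in general. So your reduction to the single-edge case is sound and matches the paper, but the single-edge retraction itself must be taken from (or rebuilt along the lines of) the cited sources rather than from the midpoint flow you describe.
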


\begin{proof}
As previously stated, the cited source proves this theorem in the cases of the interval and the circle. Their proof is based on an intermediate cellular model, which was first considered in \cite{L1}. In both of these cases, the retraction which defines the homotopy inverse of $\iota_n$ does not move points which are already situated on a vertex. As a consequence, given a general graph $(G,V,E)$, we may define our retraction in steps, where at each step we only move points which are on the interior a given edge or loop. The resulting chain of deformation retractions will eventually yield our desired equivalence.\\
\end{proof}

\begin{remark}\label{dfiv}
Note that the spaces $\DConf^{\sink}_\dt(G,V)$ have a natural action by $\FI_V$, and that this action is compatible with the inclusions
\[
\iota_\dt:\DConf^{\sink}_\dt(G,V) \hookrightarrow \Conf^{\sink}_\dt(G,V)
\]
It follows that the $\FI_V$-module $H_q(\Conf^{\sink}_\dt(G,V))$ is isomorphic to the $\FI_V$-module $H_q(\DConf^{\sink}_\dt(G,V))$, and we may prove facts about the former by proving facts about the latter. This will be our strategy when treating these spaces from the perspective of asymptotic algebra.\\
\end{remark}

We can now examine some topological consequences of the cellular model for $\Conf_n^{\sink}(G,V)$. To begin, we have the following.\\

\begin{corollary}
The homological dimension of $\Conf_n^{\sink}(G,V)$ is at most $|E|$, for all $n$. That is, for all $i > |E|$ and all $n \geq 0$, $H_i(\Conf_n^{\sink}(G,V)) = 0$.\\
\end{corollary}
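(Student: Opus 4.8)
The plan is to reduce the statement to a dimension count on the cubical model $\DConf_n^{\sink}(G,V)$ and then transport the resulting vanishing across the homotopy equivalence just established.

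First I would invoke the preceding theorem (\cite{CL}, Proposition 2.2), which provides a homotopy equivalence $\iota_n:\DConf_n^{\sink}(G,V) \hookrightarrow \Conf_n^{\sink}(G,V)$. Consequently $H_i(\Conf_n^{\sink}(G,V)) \cong H_i(\DConf_n^{\sink}(G,V))$ for every $i$, and it suffices to bound the homology of the discrete model from above.

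Next I would read off the dimension of the CW complex $\DConf_n^{\sink}(G,V)$ directly from its definition. Its cells are products $\sigma_1 \times \cdots \times \sigma_n$ in which each $\sigma_j$ is either a vertex or an edge of $G$, subject to the requirement that no two coordinates occupy the same $1$-cell. The dimension of such a cell equals the number of indices $j$ for which $\sigma_j$ is an edge. Since distinct edge-occupying coordinates must occupy distinct edges, and $G$ has only $|E|$ edges, at most $\min\{n,|E|\}$ coordinates can simultaneously be $1$-cells. Hence $\dim \DConf_n^{\sink}(G,V) \leq |E|$.

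Finally I would conclude: a CW complex of dimension at most $|E|$ has no cellular chains in degrees exceeding $|E|$, so $H_i(\DConf_n^{\sink}(G,V)) = 0$ for all $i > |E|$, and transporting along $\iota_n$ gives $H_i(\Conf_n^{\sink}(G,V)) = 0$ for all $i > |E|$ and all $n$. There is no genuine obstacle here; the only point requiring care is the combinatorial observation that the ``no two coordinates on the same edge'' constraint caps the number of simultaneously edge-occupying coordinates---and therefore the cell dimension---at $|E|$, independently of $n$. This is precisely the $n$-independent dimension bound recorded in Theorem \ref{fcm}, of which this corollary is the homological shadow.
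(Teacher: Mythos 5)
Your proposal is correct and is precisely the argument the paper intends (the corollary is stated without proof because it follows immediately from the cellular model): the homotopy equivalence $\iota_n$ reduces the claim to $\DConf_n^{\sink}(G,V)$, whose cells $\sigma_1 \times \cdots \times \sigma_n$ have dimension equal to the number of edge-occupying coordinates, which the distinctness constraint caps at $|E|$ independently of $n$. Nothing further is needed.
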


We will later prove a partial converse to the above. If $n \gg 0$, then the homological dimension of $\Conf_n^{\sink}(G,V)$ is exactly $|E|$ (see Theorem \ref{homdim}).\\

\begin{corollary}\label{eulerchar}
The Euler characteristic of $\Conf_n^{\sink}(G,V)$ is given by
\[
\chi^{(G,V,E)}_n := \chi(\Conf_n^{\sink}(G,V)) = \sum_{i=0}^{|E|}(-1)^i\binom{n}{i}\binom{|E|}{i}i!|V|^{n-i}.
\]
In particular, the Euler characteristic of $\Conf_n^{\sink}(G,V)$ depends only on $n$, $|E|$ and $|V|$.\\
\end{corollary}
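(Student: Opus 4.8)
The plan is to compute the Euler characteristic directly from the finite cellular model $\DConf_n^{\sink}(G,V)$. By the preceding theorem the inclusion $\iota_n : \DConf_n^{\sink}(G,V) \hookrightarrow \Conf_n^{\sink}(G,V)$ is a homotopy equivalence, and $\DConf_n^{\sink}(G,V)$ is a finite CW complex (being a subcomplex of the finite complex $G^n$). Since the Euler characteristic is a homotopy invariant, it suffices to form the alternating sum of the number of cells of each dimension of $\DConf_n^{\sink}(G,V)$.

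First I would record the cells and their dimensions. A cell of $\DConf_n^{\sink}(G,V)$ is a product $\sigma_1 \times \cdots \times \sigma_n$ in which each $\sigma_j$ is either a vertex or an edge of $G$, subject to the constraint that no two coordinates occupy the same edge (distinct $1$-cells). A vertex factor contributes $0$ to the dimension and an edge factor contributes $1$, so the dimension of such a cell equals the number of coordinates situated on edges.

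Next I would count the $i$-cells. To specify an $i$-cell one chooses which $i$ of the $n$ coordinates lie on edges, in $\binom{n}{i}$ ways; assigns to these an ordered tuple of distinct edges, i.e.\ an injection into $E$, of which there are $\binom{|E|}{i}\,i!$; and freely assigns one of the $|V|$ vertices to each of the remaining $n-i$ coordinates, in $|V|^{n-i}$ ways. Hence the number of $i$-cells is $\binom{n}{i}\binom{|E|}{i}\,i!\,|V|^{n-i}$, which vanishes once $i > |E|$ (too few edges) or $i > n$ (too few coordinates, via $\binom{n}{i}=0$).

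Finally I would assemble the alternating sum $\sum_{i=0}^{|E|} (-1)^i \binom{n}{i}\binom{|E|}{i}\,i!\,|V|^{n-i}$ to obtain the claimed formula, after which the dependence on only $n$, $|E|$, and $|V|$ is manifest. The argument is essentially a bookkeeping exercise; the only point demanding care is the distinctness constraint on edges, which forces the injection count $\binom{|E|}{i}\,i!$ in place of an unrestricted $|E|^i$, and this is the one place where a reader might slip.
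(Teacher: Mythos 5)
Your proposal is correct and follows essentially the same route as the paper: both pass to the cellular model $\DConf_n^{\sink}(G,V)$ via the homotopy equivalence $\iota_n$ and count the $i$-cells as $\binom{n}{i}\binom{|E|}{i}\,i!\,|V|^{n-i}$ by choosing the coordinates on edges, an injective assignment of those coordinates to edges, and arbitrary vertex placements for the rest. Your added remarks on the vanishing range and the injectivity constraint (ruling out $|E|^i$) are accurate refinements of the paper's more terse count.
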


\begin{proof}
We may compute the Euler characteristic as the alternating sum of the number of $i$-cells of $\DConf_n^{\sink}(G,V)$. To construct an $i$-cell, we must first choose which edges will appear, as well as which coordinates will be assigned to these edges. We then make an assignment of our chosen points to our chosen edges, and distribute the remaining $n-i$ coordinates to vertices. This gives us a total of $\binom{n}{i}\binom{|E|}{i}i!|V|^{n-i}$ $i$-cells, as desired.\\
\end{proof}

In view of Theorem \ref{sinkexpstab}, Corollary \ref{eulerchar} becomes very natural as it essentially states that the function
\[
n \mapsto \chi^{(G,V,E)}_n
\]
agrees with a function of the form $p(n)|V|^n$, where $p(n)$ is a polynomial of degree $|E|$.\\

Note that in Section \ref{treecaseI} we will use this cellular model to compute $\pi_1(\Conf_n^{\sink}(G,V))$ in the case wherein $(G,V,E)$ is a tree.\\

To conclude this section, we will use the first cellular model to prove that the spaces $\Conf_n^{\sink}(G,V)$ are always $K(\pi,1)$. We will accomplish this by proving that the universal cover of $\DConf_n^{\sink}(G,V)$ is always a CAT$(0)$ cubical complex. This strategy was implemented by Abrams in \cite[Theorem 3.10]{A} to prove that the classical configuration spaces of graphs are $K(\pi,1)$. See that source, and the references there in, for a brief introduction to the necessary background on CAT$(0)$ cubical complexes and the link condition.\\

\begin{theorem}\label{kpi1}
Let $(G,V,E)$ be a graph. Then the space $\Conf_n^{\sink}(G,V)$ is a $K(\pi,1)$ for all $n \geq 0$.\\
\end{theorem}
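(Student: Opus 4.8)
<br />

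The plan is to follow Abrams's strategy verbatim: show that $\DConf_n^{\sink}(G,V)$ is a nonpositively curved (locally CAT$(0)$) cubical complex, whence its universal cover is CAT$(0)$, hence contractible, and therefore $\DConf_n^{\sink}(G,V)$ is a $K(\pi,1)$. Since $\DConf_n^{\sink}(G,V)$ is homotopy equivalent to $\Conf_n^{\sink}(G,V)$ by the theorem just cited, this gives the result. I should first dispose of the loops issue: the cleanest statement (as flagged in Definition \ref{morsediff}) is that when $(G,V,E)$ is loopless, $\DConf_n^{\sink}(G,V)$ is genuinely a cubical complex, so I would begin by reducing to the loopless case, noting that any graph can be subdivided to remove loops without changing $\Conf_n^{\sink}(G,V)$ up to the relevant equivalence, and that $K(\pi,1)$ is a homotopy-invariant property.

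The heart of the argument is the \textbf{link condition} of Gromov: a cubical complex is nonpositively curved if and only if the link of every vertex is a flag simplicial complex (every set of vertices that is pairwise joined by edges spans a simplex). So the key steps are: (1) identify the vertices of $\DConf_n^{\sink}(G,V)$ — these are the $n$-tuples of vertices of $G$, i.e. functions $[n]\to V$; (2) describe the link of such a vertex $w=(w_1,\dots,w_n)$ combinatorially; and (3) verify the flag condition. For step (2), the cubes of $\DConf_n^{\sink}(G,V)$ containing $w$ correspond to choices of a set of distinct edges of $G$, each with a distinct assigned coordinate, such that each chosen edge has $w_i$ as one of its endpoints for the assigned coordinate $i$; a $k$-dimensional cube gives a $(k-1)$-simplex in the link. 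Thus a vertex of the link is a pair (coordinate $i$, edge $e$ incident to $w_i$ with $e$ oriented away from $w_i$), and a collection of such pairs spans a simplex exactly when the coordinates are distinct and the edges are distinct.

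For step (3), the flag condition then reduces to a purely local combinatorial check: if a collection of link-vertices is pairwise compatible (pairwise distinct coordinates and pairwise distinct edges), then the whole collection is jointly compatible. This is immediate because ``distinct coordinates'' and ``distinct edges'' are both pairwise conditions that automatically upgrade to the global condition — a set of coordinates that are pairwise distinct is a set of distinct elements, and likewise for edges. Hence any pairwise-joined set of link-vertices spans a simplex, so every link is flag, and the link condition holds at every vertex.

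The main obstacle I expect is not the flag verification, which is genuinely routine, but rather the bookkeeping around loops and the precise cube-to-simplex dictionary in the presence of the ``sink'' collisions. Because multiple coordinates are permitted to pile up on a single vertex of $G$, I must be careful that the cubical structure near such a vertex $w$ really is locally a product of cubes (one factor per coordinate that moves off its vertex along an incident edge), and that two coordinates sitting on the same vertex $w_i=w_j$ do not create any identification or degeneracy that would violate the cubical-complex axioms — the permitted collisions only happen \emph{at} vertices, while the links record coordinates moving \emph{into open edges}, where collisions are forbidden, so the edge-distinctness condition is exactly what keeps the link a simplicial (not merely $\Delta$-) complex. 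Handling loops is where the cubical structure can genuinely fail, which is why reducing to the loopless case at the outset is the right move.
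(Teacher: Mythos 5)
Your verification of the link condition is correct and is essentially the paper's own argument: link vertices are pairs (coordinate, incident edge), adjacency is exactly pairwise distinctness of coordinates and of edges, and pairwise distinctness trivially upgrades to joint distinctness, so every link is flag. However, your opening reduction contains a genuine gap: you claim that loops can be removed by subdividing $G$ ``without changing $\Conf_n^{\sink}(G,V)$ up to the relevant equivalence,'' and this is false. Sink configuration spaces, unlike classical configuration spaces of graphs, are emphatically \emph{not} invariant under subdivision --- the paper stresses this point repeatedly, since subdividing introduces a new vertex at which collisions become permitted, changing the homotopy type. Concretely, by Corollary \ref{eulerchar} the Euler characteristic of $\Conf_n^{\sink}(G,V)$ depends on $|V|$ and $|E|$: for the circle with one vertex and one edge one gets $\chi_2 = 1 - 2 = -1$, while for its subdivision (circle with two vertices and two edges) one gets $\chi_2 = 4 - 8 + 2 = -2$. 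So the subdivided space is not even homotopy equivalent to the original, and your reduction to the loopless case collapses.

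The correct handling of loops --- which is what the paper does --- requires no modification of $G$ at all. When $G$ has loops, $\DConf_n^{\sink}(G,V)$ fails to be a cubical complex only because a cell containing a loop has a characteristic map that identifies opposite faces (the closure of such a $1$-cell is a circle); but these identifications unwrap in the universal cover, which \emph{is} a genuine cubical complex. Since nonpositive curvature is a local condition and the links in the universal cover are isomorphic to the links downstairs, it suffices to verify the link condition directly on $\DConf_n^{\sink}(G,V)$, exactly as you did. With your subdivision step replaced by this universal-cover observation, your argument becomes the paper's proof: your flag verification goes through verbatim because a loop, being a single $1$-cell, still cannot be occupied by two coordinates at once, so the pairwise-distinctness dictionary for the link is unaffected.
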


\begin{proof}
We begin by noting that while $\DConf_n^{\sink}(G,V)$ might not be a cubical complex (i.e. in the cases where $G$ contains loops) its universal cover will be. To prove that the universal cover is CAT$(0)$, it therefore suffices to show that $\DConf_n^{\sink}(G,V)$ satisfies the link condition (see \cite[Chapter 3]{A}).\\

Let $\sigma = \sigma_1 \times \ldots \times \sigma_n$ be an $i$-cell of $\DConf_n^{\sink}(G,V)$. A 0-cell in the link of $\sigma$ corresponds to an $(i+1)$-cell of $\DConf_n^{\sink}(G,V)$ which contains $\sigma$ as a face. In other words, it is an $(i+1)$-cell which is obtained from $\sigma$ by replacing some vertex, say $\sigma_j$, with an edge containing $\sigma_j$. If two 0-cells in the link of $\sigma$ are connected via a 1-cell, we note that the edge each 0-cell adds to $\sigma$ must be distinct. Given three 0-cells in the link of $\sigma$ which form a triangle in this link, we have that each 0-cell corresponds to replacing some vertex appearing in $\sigma$ with an edge containing that vertex. Each of these three edges must be distinct by the previous remark, and so we may form an $(i+3)$-cell by adding all three of these edges to $\sigma$. This cell will be our desired 2-simplex bounded by the triangle we started with.\\
\end{proof}

We note that the above theorem is not entirely surprising. In fact, it is actually somewhat common for configuration spaces to have this property. For instance, we have already discussed that the classical configuration spaces of graphs are $K(\pi,1)$, unless the graph is a circle or line segment \cite[Theorem 3.10]{A}. It is also a classically known fact that configuration spaces of genus at least 1 surfaces (i.e. closed manifolds of dimension 2 which are neither the sphere nor the real projective plane) are $K(\pi,1)$ (see, for example, \cite[Theorem 2.7]{CP}).\\

\subsection{The second model: applying discrete Morse Theory}

In this section we provide a second cellular model for $\Conf^{\sink}_n(G,V)$. This model will be obtained from the first via a discrete Morse theory refinement. We will use this model in the next section to fully compute the homology of $\DConf^{\sink}_n(G,V)$ in the cases where $(G,V,E)$ is a tree. To begin, we will need to define our discrete vector field. The vector field constructed in this section is heavily inspired by the vector field constructed by Farley and Sabakla for the usual configuration spaces of graphs \cite{FS}.\\

Fix a spanning tree $(T,V,E_T)$ of $G$, as well as an embedding of $T$ into the plane. Next, label the vertices of $T$ using a depth-first algorithm. That is, we begin with some vertex of $T$ of degree 1, and label it with the number 0. This vertex will henceforth be referred to as the \textbf{root} of $G$. Moving along this edge we label each vertex with subsequently bigger numbers until a vertex of degree at least 3 is reached. At this point, one chooses the path which is left most (with respect to our embedding into the plane) and continues. When a vertex of degree 1 is reached, one simply goes back to the most recently visited vertex of degree at least 3 and chooses the next left-most direction. If all directions lead to vertices which have already been labeled, then one goes back to the next most recently visited vertex of degree at most 3. An example of a correctly labeled tree is given in Figure \ref{labeledtree}.\\

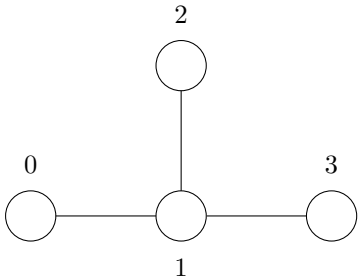
\begin{figure}
\begin{tikzpicture}

\tikzstyle{every node}=[draw,circle,fill=white,minimum size=19pt,
                            inner sep=0pt]
    \draw (0,0) node (0) [label=above:$0$]  {}
					--(2,0) node (2) [label=below:$1$] {}
					--(2,2) node (4) [label=above:$2$]{}
    			(2) -- (4,0) node (7) [label=above:$3$]{};

\end{tikzpicture}
\caption{A tree which is properly labeled.}\label{labeledtree}
\end{figure}

\begin{definition}\label{sinkdgvf}
Let $(G,V,E)$ and $(T,V,E_T)$ be as in the previous paragraph. Given an edge $e \in E$, we use $\tau(e) \in V$ to denote the endpoint of $e$ with the largest label. We will sometimes refer to $\tau(e)$ as the \textbf{top} of $e$. Similarly, we use $\iota(e)$ to denote the endpoint with the smallest label, or the \textbf{bottom} of $e$.\\

Let $\sigma = \sigma_1 \times \ldots \times \sigma_n$ be an $i$-cell of $\DConf^{\sink}_n(G,V)$ for some $i$ and $n$. If $\sigma_j \in V$ is not the root of $G$, then we write $e(\sigma_j)$ to denote the unique edge of $T$ for which $\sigma_j$ is the endpoint with the highest label.  We say that $\sigma_j$ is \textbf{blocked in $\sigma$} if $\sigma_j$ is the root of $G$, or $e(\sigma_j) = \sigma_l$ for some $l$.\\

We will define a collection of partially defined functions $X$ on the cells of $\DConf^{\sink}_n(G,V)$ inductively as follows. If $\sigma$ is in the image of $X_{i-1}$, then $X_i(\sigma)$ is undefined. If all vertices of $\sigma$ are blocked, then $X_i(\sigma)$ is undefined. Otherwise, let $\sigma_{i_1} = \ldots =\sigma_{i_r}$ be the vertex of $G$ in $\sigma$ which is unblocked with the lowest label, and assume that $i_1 < \ldots < i_r$. We set,
\[
X_i(\sigma) = \sigma_1 \times \ldots \times \sigma_{i_1-1} \times e(\sigma_{i_1}) \times \sigma_{i_1+1} \times \ldots \times \sigma_n
\]
\text{}\\
\end{definition}

Our goal for the first part of this section will be to show that $X$ is actually a discrete gradient vector field. Before we do this, it is important for one to develop a good intuition for what the function $X_i$ looks like for each $i$. An illustration is given in Figure \ref{vfex}.\\

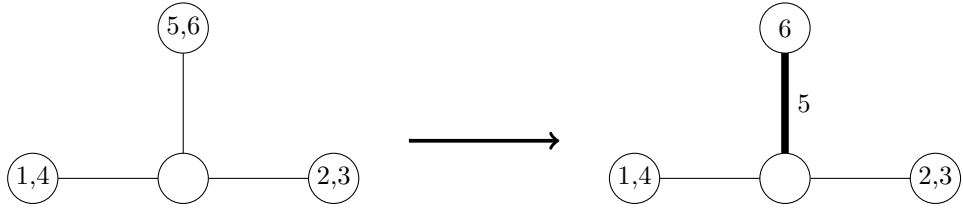
\begin{figure}
\begin{tikzpicture}

\draw [line width=1mm] (10,0) -- node[right] {5} (10,2);

\tikzstyle{every node}=[draw,circle,fill=white,minimum size=19pt,
                            inner sep=0pt]
    \draw (0,0) node (0)  {1,4}
					--(2,0) node (2) {}
					--(2,2) node (4) {5,6}
    			(2) -- (4,0) node (7) {2,3};
					
		\draw[->, line width=.5mm]
		     (5, .5) -- (7,.5);
			
		\draw (8,0) node (1)  {1,4}
					--(10,0) node (3) {}
					--(10,2) node (5) {6}
    			(3) -- (12,0) node (6) {2,3};

\end{tikzpicture}
\caption{An example of the function $X_0$ acting on a 0-cell of $\DConf_6^{\sink}(G,V)$, where the labeling is given in Figure \ref{labeledtree}. Note that the vertex labeled 2 was chosen, as it is the smallest unblocked vertex in the cell. Moreover, the coordinate 5 was chosen as it is the smallest coordinate index occupying the vertex.}\label{vfex}
\end{figure}

For the purpose of the next proof, we will need some convenient nomenclature.\\

\begin{definition}
Let $\sigma = \sigma_1 \times \ldots \times \sigma_n$ be a cell of $\DConf_n^{\sink}(G,V)$. If $e = \sigma_j$ is an edge of $T$ appearing in $\sigma$, we say that $e$ is \textbf{order respecting} if $j$ is strictly smaller than the index of every coordinate occupying $\tau(e)$. An edge $e \in E$ is said to be a \textbf{deleted edge} if $e \in E - E_T$. We note that, by definition, deleted edges are never order respecting.\\
\end{definition}

\begin{remark}
The terminology of order respecting was first used in the context of the usual configuration spaces of graphs by Farley and Sabalka \cite{FS}. We will find that order respecting edges in our context play a very similar role to order respecting edges in that context.\\
\end{remark}

\begin{proposition}\label{vfproof}
The collection of partially defined functions $X$ is a discrete vector field on $\DConf_n^{\sink}(G,V)$.\\
\end{proposition}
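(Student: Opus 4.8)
The plan is to verify, in turn, the three conditions of Definition~\ref{dmtmaindefs}, beginning with the observation that each $X_i(\sigma)$ is a genuine cell: since the distinguished vertex $\sigma_{i_1}$ is unblocked, the spanning-tree edge $e(\sigma_{i_1})$ is occupied by no coordinate of $\sigma$, so replacing the factor $\sigma_{i_1}$ by $e(\sigma_{i_1})$ yields a product of cells no two of whose $1$-cells coincide; hence $X_i(\sigma)\in\DConf^{\sink}_n(G,V)$ and $\dim X_i(\sigma)=i+1$. Condition~(2) is then immediate from the definition: the first clause of Definition~\ref{sinkdgvf} leaves $X_{i+1}$ undefined on any cell lying in the image of $X_i$, so the image of $X_i$ meets the domain of $X_{i+1}$ trivially. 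For the regular-face condition~(3), note that $\sigma$ is obtained from $X_i(\sigma)$ by restricting the $i_1$-th coordinate from $e(\sigma_{i_1})$ to its endpoint $\sigma_{i_1}=\tau(e(\sigma_{i_1}))$; because $e(\sigma_{i_1})$ is a tree edge it is not a loop, so $\sigma_{i_1}$ is the image of exactly one endpoint of the interval factor, and the preimage of $\sigma$ under the product characteristic map of $X_i(\sigma)$ is a single facet of the cube $D^{i+1}$, a closed ball carried homeomorphically onto $\overline\sigma$. This is exactly regularity of the face, and it is here that the absence of loops in $T$ is essential.

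Injectivity (condition~(1)) is the substantive point, and I would deduce it from an explicit description of the image of $X$, proved by induction on dimension. Writing $\ell(u)$ for the label of a vertex $u$, let $m_v(\sigma)$ be the least label of an unblocked vertex of $\sigma$, and $m_e(\sigma)$ the least value of $\ell(\tau(e))$ over order respecting edges $e$ of $\sigma$ (each set to $\infty$ if no such vertex or edge exists). A first observation is that these are never simultaneously finite and equal: if a vertex $v$ were both unblocked and the top of an order respecting edge $e$, then $e=e(v)$ would be occupied, contradicting unblockedness. The lemma I would prove is
\[
\sigma \in \text{im}\, X_{i-1} \iff m_e(\sigma) < m_v(\sigma),
\]
so the redundant cells are exactly those with $m_v(\sigma)<m_e(\sigma)$. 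The forward implication is direct: if $\sigma=X_{i-1}(\rho)$ and $w$ is the lowest unblocked vertex of $\rho$, then the inserted edge $e(w)$ is order respecting in $\sigma$ with top $w$, the vertex $w$ becomes blocked, and every other vertex retains its status, giving $m_e(\sigma)\le \ell(w)<m_v(\sigma)$.

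The reverse implication is where the real work lies, and it is the step I expect to be the main obstacle. Given $m_e(\sigma)<m_v(\sigma)$, let $e^\ast$ be the order respecting edge whose top $w^\ast$ has the smallest label — unique, since distinct tree edges have distinct tops — and let $\rho$ be obtained by contracting $e^\ast$ to $w^\ast$. One checks readily that $w^\ast$ is unblocked in $\rho$, that it is the lowest such vertex (every other unblocked vertex survives from $\sigma$ with label exceeding $m_v(\sigma)>\ell(w^\ast)$), and that it occurs at the smallest available index; consequently $X_{i-1}(\rho)$ would reinsert $e^\ast$ and return $\sigma$. The only genuine issue is to certify that $X_{i-1}$ is \emph{defined} on $\rho$, i.e.\ that $\rho\notin\text{im}\,X_{i-2}$. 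Here the minimality of $w^\ast$ is used: any order respecting edge of $\rho$ with top below $\ell(w^\ast)$ would already be order respecting in $\sigma$, contradicting the choice of $e^\ast$, so $m_e(\rho)>m_v(\rho)=\ell(w^\ast)$, and the induction hypothesis applied to $\rho$ yields $\rho\notin\text{im}\,X_{i-2}$, completing the step.

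With the lemma in hand, injectivity is formal. If $X_i(\sigma)=X_i(\sigma')=\tau$, then $\tau$ is collapsible, and by the forward implication the edge inserted by $X_i$ is, in both cases, forced to be the order respecting edge of $\tau$ whose top has minimal label; since $\tau$ determines this edge and its position, contracting it to its top recovers a single cell, whence $\sigma=\sigma'$. Thus $X$ satisfies all three conditions and is a discrete vector field on $\DConf^{\sink}_n(G,V)$.
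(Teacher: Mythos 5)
Your proposal is correct, and on the first two conditions (disjointness of the image of $X_i$ from the domain of $X_{i+1}$, and regularity of the face via the absence of loops in the spanning tree) it matches the paper's proof in substance. On injectivity, the substantive point, you take a reorganized route: the paper assumes $X_i(\sigma)=X_i(\sigma')=\tau$ and derives a contradiction by forming the doubly-contracted $(i-1)$-cell $\sigma''$ (replacing both the edge inserted into $\sigma'$ and the minimal-top order respecting edge $e_\sigma$ of $\tau$ by their tops), showing $\tau(e_\sigma)$ is the lowest unblocked vertex of $\sigma''$ and that $\sigma''\notin\mathrm{im}\,X_{i-2}$, whence $X_{i-1}(\sigma'')=\sigma'$, contradicting that $X_i$ is defined on $\sigma'$; you instead prove once and for all the characterization $\sigma\in\mathrm{im}\,X_{i-1}\iff m_e(\sigma)<m_v(\sigma)$ by induction on dimension and read injectivity off formally. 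Both arguments turn on the same two transfer facts — the inserted edge is order respecting with top the lowest unblocked vertex, and order respecting edges survive contraction of an edge with strictly smaller top — but your lemma is essentially the redundant/collapsible dichotomy the paper only records afterwards in Proposition \ref{critclass}, so proving it up front buys a cleaner injectivity step and makes that later classification of critical cells immediate. One compression worth expanding: in your last paragraph, the claim that the inserted edge must be the \emph{minimal-top} order respecting edge of $\tau$ does not follow from the forward implication alone, which only yields $m_e(\tau)\le\ell(w)$; you also need that $X_i$ is defined on $\sigma$, hence $m_e(\sigma)>m_v(\sigma)=\ell(w)$ by your own equivalence, combined with the observation (the same transfer fact used in your reverse direction) that any order respecting edge of $\tau$ whose top is not $w$ is already order respecting in $\sigma$, since only the occupancy of $w$ changes. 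With that line added, the argument is complete.
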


\begin{proof}
We must verify three things. These are:
\begin{enumerate}
\item that the image of $X_i$ is disjoint from the domain of $X_{i+1}$;
\item that for any $i$-cell $\sigma$ in the domain of $X_i$, $\sigma$ is a regular face of $X_i(\sigma)$;
\item that $X_i$ is injective.\\
\end{enumerate}
The first claim was built into our definition of the functions $X_i$. For the second claim, we note that any loops of $G$ are always excluded from our choice of spanning tree. Therefore, $X_i(\sigma)$ is obtained from $\sigma$ by replacing a vertex appearing in $\sigma$ by an interval. This will make $\sigma$ a regular face of $X_i(\sigma)$.\\

It therefore remains to show that $X_i$ is injective. Let $\sigma$ be a cell in the domain of $X_i$, and consider $X_i(\sigma)$. The cell $X_i(\sigma)$ must have at least one order respecting edge $e$. We note that if $\sigma'$ is an $i$-cell for which $X(\sigma') = X(\sigma)$, then there must be some order respecting edge $e$ of $X(\sigma)$ for which $\sigma'$ is obtained from $X(\sigma)$ by replacing $e$ with $\tau(e)$. Let $e$ be the order respecting edge which is added to $\sigma'$ to form $X(\sigma)$, and let $e_\sigma$ denote the order respecting edge of $X_i(\sigma)$ for which $\tau(e_\sigma)$ has the smallest label among all order respecting edges. We will show that $e = e_\sigma$.\\

Assume for contradiction that this is not the case. Let $\sigma''$ be the $i-1$ cell obtained from $X(\sigma)$ by replacing $e$ with $\tau(e)$ and $e_\sigma$ with $\tau(e_\sigma)$. The vertex $\tau(e_\sigma)$ is unblocked in $\sigma''$, and is the smallest unblocked vertex in this cell. Indeed, if there were some smaller unblocked vertex, then this vertex would also be unblocked in $\sigma'$ however, by assumption, $\tau(e)$ is the smallest unblocked vertex in $\sigma'$. Next we claim that $\sigma''$ is not in the image of $X_{i-2}$. Indeed, if it were then there would be some order respecting edge $e''$ in $\sigma''$ which was added by $X$ to create $\sigma''$. However, $e''$ is also order respecting in $X(\sigma)$, and therefore $\tau(e'')$ has smaller label than $\tau(e_\sigma)$. Therefore applying $X_{i-2}$ to the cell obtained from $\sigma''$ by replacing $e''$ with $\tau(e'')$ would not produce $\sigma''$, a contradiction.\\

In summary, we have shown that $\sigma''$ is not in the image of $X_{i-2}$, and that $\tau(e_\sigma)$ is the smallest unblocked vertex. Therefore $X_{i-1}(\sigma'')$ is defined and equal to $\sigma'$ by definition. This is a contradiction on the fact that $X(\sigma') = X(\sigma)$.\\
\end{proof}

To prove that $X$ is a discrete gradient vector field, it therefore remains to show that it does not admit any non-trivial closed paths (see Definition \ref{dmtmaindefs}). To accomplish this, we use a trick of Farley and Sabalka \cite[Theorem 3.8]{FS}.\\

\begin{definition}
Let $v$ be a vertex of $G$, and let $\sigma = \sigma_1 \times \ldots \times \sigma_n$ be a cell of $\DConf_n^{\sink}(G,V)$. The \textbf{geodesic} to $v$ is the unique path in $T$ between the root (i.e. the vertex of $G$ with label 0) and $v$. We define an integrally valued function on the cells of $\DConf_n^{\sink}(G,V)$,
\[
f_v(\sigma) = |\{j \mid \sigma_j \text{ is contained in the geodesic to $v$.}\}|
\]
\text{}\\
\end{definition}

Note that if multiple coordinates occupy the same vertex, then the function $f_v$ does count all of them. For example, if $(G,V,E)$ is the graph of Figure \ref{labeledtree}, $\sigma$ is the cell of Figure \ref{cellexample}, and $v$ is the vertex labeled 2, then $f_v(\sigma) = 4$. If we instead choose $v$ to be the vertex labeled 3, then $f_v(\sigma)$ is $4$, as well.\\

The following lemma appears in the work of Farley in Sabalka \cite{FS}, although they only deal with the usual configuration spaces of graphs. The proof is almost tautological based on the definition of $f_v$.\\

\begin{lemma}\label{fvcase}
For any vertex $v$ of $G$, $f_v$ satisfies the following properties:
\begin{enumerate}
\item if a cell $\sigma'$ can be obtained from a cell $\sigma$ by replacing some non-deleted edge $e$ with $\tau(e)$, then $f_v(\sigma) = f_v(\sigma')$. In particular, if $\sigma$ is a redundant cell, then $f_v(\sigma) = f_v(X(\sigma))$;
\item if a cell $\sigma'$ can be obtained from a cell $\sigma$ by replacing some non-deleted edge $e$ with $\iota(e)$, then either $f_v(\sigma) = f_v(\sigma')$, or $f_v(\sigma') = f_v(\sigma) + 1$. The latter case happens precisely when the geodesic to $v$ passes through $\iota(e)$ but does not contain $e$;
\item if a cell $\sigma'$ can be obtained from a cell $\sigma$ by replacing some deleted edge $e$ with $\tau(e)$, then either $f_v(\sigma) = f_v(\sigma')$, or $f_v(\sigma') = f_v(\sigma) + 1$. The latter case happens precisely when the geodesic to $v$ passes through $\tau(e)$;
\item if a cell $\sigma'$ can be obtained from a cell $\sigma$ by replacing some deleted edge $e$ with $\iota(e)$, then either $f_v(\sigma) = f_v(\sigma')$, or $f_v(\sigma') = f_v(\sigma) + 1$. The latter case happens precisely when the geodesic to $v$ passes through $\iota(e)$;
\end{enumerate}
\end{lemma}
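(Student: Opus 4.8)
The plan is to analyze how the function $f_v$ changes under each of the four elementary cell modifications described, working directly from the definition $f_v(\sigma) = |\{j \mid \sigma_j \text{ is contained in the geodesic to } v\}|$. Since each modification only alters a single coordinate $\sigma_j$ (replacing one edge or vertex with another), the quantity $f_v$ can change by at most the difference in how many of the involved cells lie on the geodesic. The key observation I would establish first is that an edge $e$ of $T$ is contained in the geodesic to $v$ if and only if \emph{both} of its endpoints $\iota(e)$ and $\tau(e)$ lie on the geodesic (this uses that $T$ is a tree, so the geodesic is the unique path and cannot ``enter and exit'' an edge without traversing it). By contrast, a vertex lies on the geodesic as a single combinatorial event. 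This asymmetry is exactly what produces the four distinct cases.

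First I would treat item (1): replacing a non-deleted edge $e$ with its top $\tau(e)$. If $e$ lies on the geodesic, then so does $\tau(e)$, so the contribution of that coordinate is unchanged (both count as one); if $e$ does not lie on the geodesic, then I must check whether $\tau(e)$ can nonetheless be on it. Here I would argue that if $\tau(e)$ were on the geodesic but $e$ were not, the geodesic would have to reach $\tau(e)$ through a \emph{different} edge, and by the depth-first labeling convention $\tau(e)$ is the high-labeled endpoint of $e$, so the geodesic to $v$ passing through $\tau(e)$ forces it to pass through $e$ as well (since $e$ is the unique edge of $T$ of which $\tau(e)$ is the top, and any path from the root reaching $\tau(e)$ must traverse $e$). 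Thus the count is unchanged and $f_v(\sigma) = f_v(\sigma')$. The special case in the statement, that redundant cells satisfy $f_v(\sigma) = f_v(X(\sigma))$, then follows immediately since $X$ replaces a blocked-or-unblocked vertex by a non-deleted edge $e(\sigma_j)$ — I would invoke the already-proven structure of $X$ from Definition~\ref{sinkdgvf} and Proposition~\ref{vfproof} to see this is the inverse operation of item (1).

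For items (2)--(4) the analysis is parallel but the edge may now fail to be ``fully on'' the geodesic. For item (2), replacing non-deleted $e$ with its bottom $\iota(e)$: if $e$ was on the geodesic, then $\iota(e)$ is too, so the count is unchanged; the only way to gain a point is if $\iota(e)$ lies on the geodesic while $e$ does not, which is precisely the condition that the geodesic reaches the vertex $\iota(e)$ but turns off before traversing $e$ — this gives the $+1$ case exactly as stated. Items (3) and (4) concern deleted edges $e \in E - E_T$, which are never part of $T$ and hence never themselves ``on the geodesic'' (the geodesic lives in $T$); consequently replacing such an $e$ by either endpoint can only gain a point, and it does so precisely when that endpoint happens to lie on the geodesic. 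I would phrase each of these as a short case-check on whether the single affected endpoint is on the geodesic.

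The main obstacle, such as it is, lies in item (1): pinning down rigorously that for a non-deleted edge, $\tau(e)$ lies on the geodesic \emph{only if} $e$ does. Everything else is a direct count, but this implication is where the tree structure and the depth-first labeling genuinely enter, and a careless argument could wrongly allow a $+1$ jump in case (1) and thereby break the later claim that $f_v$ is non-decreasing along the relevant cellular paths (which is the whole point of introducing $f_v$ to rule out closed paths). I would therefore be careful to state and use the uniqueness of the geodesic path in $T$ explicitly at this step. The remaining cases are essentially tautological from the definition, as the statement itself advertises.
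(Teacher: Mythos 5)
Your proof is correct and follows exactly the route the paper intends: the paper omits a written proof, remarking only that the lemma is ``almost tautological'' from the definition of $f_v$, and your direct case-by-case count supplies precisely those details. In particular, you correctly isolate the one genuinely non-trivial point---that for a non-deleted edge $e$, the vertex $\tau(e)$ lies on the geodesic only if $e$ does, since the depth-first labeling makes $e$ the unique edge of $T$ of which $\tau(e)$ is the top, so the root-to-$\tau(e)$ path must end by traversing $e$---which is what rules out a $+1$ jump in case (1) and makes the redundant-cell assertion $f_v(\sigma)=f_v(X(\sigma))$ follow.
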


As was noted by Farley and Sabalka in their original work \cite{FS}, one of the main benefits of the functions $f_v$ is that they behave fairly well when applied to cellular paths. In particular, if $\alpha_0^{(i)}, \alpha_1^{(i)}, \ldots, \alpha_{l-1}^{(i)}, \alpha_l^{(i)}$ is a cellular path between $\sigma$ and $\sigma'$, then for any vertex $v$
\begin{eqnarray}
f_v(\sigma_0^{(i)}) \leq f_v(\sigma_1^{(i)}) \leq \ldots \leq f_v(\sigma_{l-1}^{(i)}) \leq f_v(\sigma_l^{(i)}). \label{pathadd}
\end{eqnarray}
This will play a key part in the following proof.\\

\begin{proposition}
The discrete vector field $X$ does not permit any non-trivial closed cellular paths.\\
\end{proposition}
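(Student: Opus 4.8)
The plan is to argue that the functions $f_v$ serve as a kind of Lyapunov/height function that is monotone along any cellular path, and strictly increases around any closed loop that is non-trivial, thereby precluding non-trivial closed paths. The key observation recorded in equation (\ref{pathadd}) is that along a cellular path $\alpha_0^{(i)}, \ldots, \alpha_l^{(i)}$, each $f_v$ is weakly monotone increasing. If the path is closed, meaning $\alpha_0^{(i)} = \alpha_l^{(i)}$, then $f_v(\alpha_0^{(i)}) = f_v(\alpha_l^{(i)})$, and hence by the monotonicity all intermediate values must be equal: $f_v(\alpha_j^{(i)}) = f_v(\alpha_k^{(i)})$ for all $j,k$ and \emph{every} vertex $v$. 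So first I would reduce the problem to showing that if a closed cellular path has $f_v$ constant for all $v$, then the path is trivial.

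To establish monotonicity (\ref{pathadd}), I would unwind the definition of a cellular path: passing from $\alpha_j^{(i)}$ to $\alpha_{j+1}^{(i)}$ means $\alpha_{j+1}^{(i)}$ is a face of $X_i(\alpha_j^{(i)})$. The cell $X_i(\alpha_j^{(i)})$ is obtained from $\alpha_j^{(i)}$ by replacing the smallest unblocked vertex with its associated non-deleted edge $e$; since $e$ is non-deleted, part (1) of Lemma \ref{fvcase} gives $f_v(\alpha_j^{(i)}) = f_v(X_i(\alpha_j^{(i)}))$. Then $\alpha_{j+1}^{(i)}$ is obtained from $X_i(\alpha_j^{(i)})$ by contracting one of its edges to an endpoint; if that edge is contracted to its top $\tau$, the value is again unchanged by part (1), while if it is contracted to its bottom $\iota$, part (2) shows the value either stays the same or increases by one. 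In all cases $f_v$ does not decrease, which is exactly (\ref{pathadd}).

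With monotonicity in hand, the main work is the final implication: a closed path on which every $f_v$ is constant must be trivial. Here I would exploit the rigidity forced by equality. Along the step from $\alpha_j^{(i)}$ to $\alpha_{j+1}^{(i)}$, the edge $e$ that $X_i$ adds is the edge $e(\sigma_{i_1})$ associated to the smallest unblocked vertex, and $\alpha_{j+1}^{(i)}$ is recovered by collapsing some edge of $X_i(\alpha_j^{(i)})$. The constraint that $f_v(\alpha_{j+1}^{(i)}) = f_v(\alpha_j^{(i)})$ for all $v$ — combined with the precise description in parts (2)--(4) of Lemma \ref{fvcase} of exactly when the value jumps — severely restricts which edge can be collapsed and how. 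I expect one can show that the only way to keep every $f_v$ fixed is either to collapse the newly-added edge $e$ back to its top $\tau(e)$ (returning to $\alpha_j^{(i)}$ and making the step a constant one), or to move a coordinate in a manner that strictly advances it along some geodesic, contradicting constancy of the corresponding $f_v$. Iterating, each step of the closed path must fix the configuration, forcing $\alpha_j^{(i)} = \alpha_k^{(i)}$ throughout; hence the path is trivial.

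The main obstacle will be this last rigidity argument: translating ``$f_v$ constant for all $v$'' into ``no coordinate genuinely moves'' requires a careful case analysis of how the edge-collapse in a cellular step interacts with the geodesics to the various vertices. The subtlety is that a single cellular step can in principle shuffle several coordinates at once (the cell $X_i(\alpha_j^{(i)})$ differs from $\alpha_j^{(i)}$ in one coordinate, but collapsing a \emph{different} edge can move another coordinate), so one must verify that any genuine displacement is detected by some $f_v$. I would handle this by choosing, for a coordinate that moves from an edge to the endpoint not equal to its top, a witness vertex $v$ lying beyond that edge in the tree, for which Lemma \ref{fvcase}(2) forces a strict increase — contradicting constancy. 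This pinning-down of witnesses is where the depth-first labeling and the distinction between deleted and non-deleted edges must be used most carefully.
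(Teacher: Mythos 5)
Your first stage agrees with the paper: monotonicity (\ref{pathadd}) plus closedness forces every $f_v$ to be constant along the path, and your witness-vertex argument (taking $v=\iota(e)$, resp.\ $v=\tau(e)$) correctly rules out cases 2 through 4 of Lemma \ref{fvcase}, so every step of the closed path must collapse a non-deleted edge to its top. The genuine gap is in your final rigidity claim: it is \emph{not} true that constancy of all $f_v$ forces each step to fix the configuration. If $X(\alpha_j^{(i)})$ is obtained from $\alpha_j^{(i)}$ by inserting the edge $e$, then collapsing any \emph{other} non-deleted edge $e'$ of $X(\alpha_j^{(i)})$ to its top $\tau(e')$ is a legitimate cellular step producing $\alpha_{j+1}^{(i)}\neq\alpha_j^{(i)}$, and it leaves $f_v$ unchanged for \emph{every} vertex $v$: for a tree edge $e'$, the geodesic to $v$ contains $\tau(e')$ if and only if it contains $e'$ itself, which is exactly why case 1 of Lemma \ref{fvcase} carries no exceptional subcase and why no witness vertex exists for such a move. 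The functions $f_v$ are blind to upward moves of a coordinate along tree edges, so they alone cannot show that ``no coordinate genuinely moves,'' and the case analysis you propose cannot be completed.

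The paper closes this gap with a second, independent monovariant that your proposal is missing. Having used the $f_v$ to restrict all steps to top-collapses of non-deleted edges, it tracks order respecting edges: if $\sigma^j$ has none, the next cell acquires one, since the inserted edge $e$ is order respecting in $X(\sigma^j)$ and survives any collapse of $e'\neq e$ (distinct tree edges have distinct tops, so the coordinates at $\tau(e)$ are undisturbed); and if $e_j$ denotes the order respecting edge of $\sigma^j$ whose top has minimal label, then the label of $\tau(e_{j+1})$ is strictly smaller than that of $\tau(e_j)$. This uses that $\sigma^j$ is redundant, so by the classification in the proof of Proposition \ref{vfproof} its smallest unblocked vertex $v$ has label strictly below that of $\tau(e_j)$, while the inserted edge $e(v)$ is order respecting with $\tau(e(v))=v$ and persists into $\sigma^{j+1}$. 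A closed path would therefore produce an infinite strictly decreasing sequence of vertex labels, a contradiction. Without this argument, or some substitute detecting the top-collapse steps that every $f_v$ ignores, your proof does not go through.
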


\begin{proof}
Assume that $\sigma^0, \sigma^1, \ldots, \sigma^{l-1}, \sigma^l$ is a closed cellular path. The observation (\ref{pathadd}) implies that, in fact, $f_v(\sigma^j) = f_v(\sigma^k)$ for any $j,k$. Because $v$ is arbitrary, this immediately disqualifies cases 2 through 4 of Lemma \ref{fvcase}. In particular, for each $j$ it must be the case that $\sigma^{j+1}$ is obtained from $X(\sigma^j)$ by replacing some non-deleted edge $e$ with $\tau(e)$.\\

Assume first that for some $j$, $\sigma^j$ does not have any order respecting edges. We claim that $\sigma^{j+1}$ must have at least one order respecting edge. Indeed, we know that $X(\sigma^j)$ is obtained from $\sigma^j$ by replacing some unblocked vertex, appearing in the least available coordinate index, with an edge $e$. This edge will, by definition, be order respecting. By the previous paragraph we know that $\sigma^{j+1}$ is a face of $X(\sigma^j)$ obtained by replacing an edge $e'$ with $\tau(e')$. If $e$ is chosen, then it would be the case that $\sigma^{j+1} = \sigma^j$, which we may assume is not the case. Therefore our claim is proven.\\

We may therefore assume without loss that $\sigma^j$ has at least one order respecting edge, as otherwise we replace $\sigma^j$ with $\sigma^{j+1}$ in the argument which follows. Let $e_j$ be the order respecting edge in $\sigma^j$ for which $\tau(e_j)$ is minimal among all order respecting edges in $\sigma^j$. Then we claim that the label of $\tau(e_{j+1})$ is strictly smaller than that of $\tau(e_j)$. If $v$ is the unblocked vertex of $\sigma^j$ of minimal label, then $\tau(e_j)$ has a strictly larger label than $v$ (see the proof of Proposition \ref{vfproof}). On the other hand, $X(\sigma^j)$ is obtained from $\sigma^j$ by replacing the smallest coordinate occupying $v$ with $e(v)$. By definition $e(v)$ is order respecting, and by the first paragraph it is also order respecting in $\sigma^{j+1}$ because $e_{j+1}$ was chosen to be minimal we must have that the label of $\tau(e_{j+1})$ is at most the label of $v$. This is proves the claim. Note that we have arrived at a contradiction on our path being closed, as $j$ was arbitrary.\\
\end{proof}

This now immediately leads to the main theorem of this section. We note that the theorem simply follows from the work in this section, as well as Section \ref{dmt}.\\

\begin{theorem}\label{dconfisdm}
The discrete vector field $X$ is actually a discrete gradient vector field. In particular:
\begin{enumerate}
\item $\DConf_n^{\sink}(G,V)$ is homotopy equivalent to a CW complex with precisely $m_i$ $i$-cells for each $i$, where $m_i$ is the number of critical $i$-cells of $X$;
\item the homology of $\DConf_n^{\sink}(G,V)$ is isomorphic to the homology of the Morse complex associated to $X$ (see Definition \ref{morsediff}).\\
\end{enumerate}
\end{theorem}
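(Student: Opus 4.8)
The plan is to assemble the structural results already established in this section together with the general machinery of Section \ref{dmt}; the theorem is a formal consequence, as the substantive verifications have already been carried out. Proposition \ref{vfproof} shows that the collection of partially defined functions $X$ satisfies the three axioms required of a discrete vector field (injectivity of each $X_i$, disjointness of the image of $X_i$ from the domain of $X_{i+1}$, and the regular-face condition), while the proposition immediately preceding the theorem shows that $X$ admits no non-trivial closed cellular paths.

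First I would invoke Definition \ref{dmtmaindefs}: a discrete vector field which admits no non-trivial closed cellular paths is, by definition, a discrete gradient vector field. Combining the two propositions just cited therefore yields the first assertion of the theorem with no additional argument.

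For the two enumerated conclusions I would simply apply the general theory to $Y = \DConf_n^{\sink}(G,V)$ equipped with the discrete gradient vector field $X$. Conclusion (1) is precisely the content of Corollary \ref{critdecomp}, which produces a homotopy-equivalent CW complex whose $i$-cells are indexed by the critical $i$-cells of $X$. Conclusion (2) is likewise immediate from Theorem \ref{morsediffcomp}, which identifies the homology of a space carrying a discrete gradient vector field with the homology of its associated Morse complex (Definition \ref{morsediff}).

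Since every ingredient is in place, there is no genuine obstacle remaining at this stage; the only care required is to confirm that the hypotheses of Corollary \ref{critdecomp} and Theorem \ref{morsediffcomp}---namely that $\DConf_n^{\sink}(G,V)$ is an honest CW complex and that $X$ is a bona fide discrete gradient vector field---are met, both of which have now been verified. The real difficulty of the statement was discharged earlier, in the injectivity argument of Proposition \ref{vfproof} and in the monotonicity estimate (\ref{pathadd}) for the functions $f_v$ that rules out closed cellular paths.
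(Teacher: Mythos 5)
Your proposal is correct and matches the paper's treatment exactly: the paper also states that the theorem ``simply follows from the work in this section, as well as Section \ref{dmt},'' with the discrete vector field axioms supplied by Proposition \ref{vfproof}, closed paths ruled out by the preceding proposition via the functions $f_v$, and conclusions (1) and (2) read off from Corollary \ref{critdecomp} and Theorem \ref{morsediffcomp} respectively. No gaps; nothing further is needed.
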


With the above theorem in mind, it becomes important that one develop a good understanding of the critical cells of the vector field $X$. The following classification follows from the work of this section.\\

\begin{proposition}\label{critclass}
Let $\sigma^{(i)}$ be a cell of $\DConf_n^{\sink}(G,V)$, then $\sigma^{(i)}$ is a critical cell of $X$ if and only if
\begin{enumerate}
\item all vertices appearing in $\sigma^{(i)}$ are blocked, and
\item $\sigma^{(i)}$ has no order respecting edges.
\end{enumerate}
In particular, the number of critical $i$-cells of $X$ is at most $\binom{|E|}{i} \binom{n}{i}i!\delta_i^{n-i}$, where $\delta_i = \min\{i+1,|V|\}$.\\
\end{proposition}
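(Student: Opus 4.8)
The plan is to first unwind the definitions of \emph{redundant}, \emph{collapsible}, and \emph{critical} so that the two listed conditions are exactly what remains after the tautological constraints are removed, and then to read the counting bound off the first of those conditions. Recall that a cell is critical precisely when it lies neither in the domain nor in the image of $X$. From Definition \ref{sinkdgvf}, $X_i(\sigma)$ fails to be defined exactly when either $\sigma$ lies in the image of $X_{i-1}$ or every vertex of $\sigma$ is blocked. Intersecting ``not in the domain'' with ``not in the image'' therefore collapses to the single requirement that all vertices of $\sigma$ be blocked, together with $\sigma \notin \text{im}\, X_{i-1}$. Thus condition (1) is immediate, and it remains only to show that, for a cell all of whose vertices are blocked, lying outside the image of $X_{i-1}$ is equivalent to having no order respecting edge.

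Next I would establish the two directions of that equivalence. For the easy direction, suppose $\sigma = X_{i-1}(\rho)$. By construction $X_{i-1}$ replaces the smallest-index coordinate $j$ occupying the minimal unblocked vertex $v$ of $\rho$ by the edge $e(v)$; since $j$ is smaller than every coordinate remaining at $v = \tau(e(v))$, the edge $e(v)$ is order respecting in $\sigma$. Contrapositively, a cell with no order respecting edge cannot be collapsible, which settles the $(\Leftarrow)$ direction of the proposition. For the converse, assume $\sigma$ has all vertices blocked and possesses an order respecting edge; I must produce a preimage under $X_{i-1}$ and thereby contradict criticality. Let $e$ be the order respecting edge of $\sigma$ whose top $\tau(e)$ has the smallest label, and let $\rho$ be obtained from $\sigma$ by replacing $e$ with $\tau(e)$. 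The claim is that $X_{i-1}(\rho) = \sigma$, and verifying this is where the argument mirrors—almost verbatim—the injectivity step of Proposition \ref{vfproof}.

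The verification breaks into three points, which I expect to be the main obstacle, since each requires the careful label bookkeeping of that earlier proof. First, $\tau(e)$ is unblocked in $\rho$: the edge $e = e(\tau(e))$ no longer appears, and $\tau(e)$ is not the root, since the root is never a top. Second, $\tau(e)$ is the \emph{smallest} unblocked vertex of $\rho$: any other occupied vertex $w$ has $e(w) \neq e$, so its blocked status is unchanged in passing from $\sigma$ to $\rho$, and it stays blocked because all vertices of $\sigma$ were blocked. Third, $\rho \notin \text{im}\, X_{i-2}$: were $\rho = X_{i-2}(\rho')$, the edge $e''$ added by $X$ would be order respecting in $\rho$, hence also in $\sigma$, so $\tau(e'')$ would have label at least that of $\tau(e)$ by the choice of $e$; but then $\tau(e)$ would be an unblocked vertex of $\rho'$ of strictly smaller label than the vertex $\tau(e'')$ selected by $X_{i-2}$, contradicting that selection. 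Granting the three points, $X_{i-1}(\rho)$ is defined and moves the smallest coordinate at $\tau(e)$—which is exactly the coordinate that occupied $e$ in $\sigma$, since $e$ was order respecting—back onto $e(\tau(e)) = e$, recovering $\sigma$. This exhibits $\sigma$ as collapsible, contradicting criticality and proving the $(\Rightarrow)$ direction.

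Finally I would deduce the count. An $i$-cell has exactly $i$ coordinates on edges and $n-i$ on vertices, so a critical cell is specified by choosing its $i$ edges ($\binom{|E|}{i}$ ways), choosing and bijectively assigning the $i$ edge-occupying coordinates ($\binom{n}{i}\,i!$ ways), and placing the remaining $n-i$ coordinates on vertices. Here condition (1) does the work: if an occupied vertex $v$ is not the root then $e(v)$ must appear in $\sigma$, and since $v \mapsto e(v)$ is injective into the at most $i$ chosen spanning-tree edges, there are at most $i$ occupied non-root vertices, hence at most $i+1$ occupied vertices in total; this number is trivially also at most $|V|$. Consequently the $n-i$ vertex coordinates may occupy only one of at most $\delta_i = \min\{i+1,|V|\}$ vertices, giving at most $\delta_i^{\,n-i}$ placements. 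Multiplying, the number of critical $i$-cells is at most $\binom{|E|}{i}\binom{n}{i}\,i!\,\delta_i^{\,n-i}$; this is an over-count because it ignores the order-respecting constraint of condition (2), which is exactly why it yields only an inequality.
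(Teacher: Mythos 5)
Your proof is correct and takes essentially the same approach as the paper's: the same characterization of critical cells (all vertices blocked, and collapsibility detected by the order respecting edge of minimal top, with the preimage verification recycled from the injectivity argument of Proposition \ref{vfproof}), followed by the identical count of cells with all vertices blocked. You simply carry out in detail the steps the paper dismisses as ``clear'' or delegates to the proof of Proposition \ref{vfproof}, which is a faithful, if more verbose, rendition of the same argument.
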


\begin{proof}
It is clear that the two conditions are sufficient for a cell to be critical. Conversely, assume that $\sigma$ is critical. If $\sigma$ has order respecting edges, then we let $e$ be the order respecting edge for which $\tau(e)$ is minimal. If the smallest unblocked vertex of $\sigma$ has smaller label than $\tau(e)$, then the cell is redundant, whereas if it has larger label than that of $\tau(e)$ the cell is collapsible (see the proof of Proposition \ref{vfproof}). We may therefore assume all vertices of $\sigma$ are blocked if it has an order respecting edge. In this case, it is clear that $\sigma$ is collapsible, as it is $X(\sigma')$ where $\sigma'$ is obtained from $\sigma$ by replacing $e$ with $\tau(e)$.\\

It follows that no edges of $\sigma$ are order respecting. In this case, having a unblocked vertex would clearly make $\sigma$ redundant.\\

For the second part of the claim, we can count cells whose every vertex is blocked as follows. Begin by choosing $i$ edges and assigning $i$ coordinates to these edges. At this point, every other coordinate must be placed at one of at most $\delta_i$ vertices (i.e the vertices which are $\tau(e)$ for one of the, at most $i$, chosen non-deleted edges $e$, as well as the root). The constructed cell may not be critical, although all critical cells arise in this fashion.\\
\end{proof}

The work in this section grants us a convenient bound on the Betti numbers of the spaces $\DConf_n^{\sink}(G,V)$.\\

\begin{corollary}\label{expbound}
For each $i$, the function
\[
n \mapsto \dim_\Q(H_i(\DConf_n^{\sink}(G,V)))
\]
is bounded from above by a function of the form $p(n)\delta_i^n$, where $p(n)$ is a polynomial of degree $i$ and,
\[
\delta_i = \min\{i+1,|V|\}
\]
\text{}\\
\end{corollary}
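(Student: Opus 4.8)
The plan is to derive this bound purely formally from the two results immediately preceding it: the cell count in Proposition \ref{critclass} and the homotopy-theoretic consequence of $X$ being a discrete gradient vector field recorded in Theorem \ref{dconfisdm}. All the genuine content has already been done in establishing those; what remains here is the elementary passage from ``number of critical cells'' to ``rational Betti number,'' followed by a routine rewriting of the bound into the claimed polynomial-times-exponential shape.

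First I would invoke Theorem \ref{dconfisdm}: since $X$ is a discrete gradient vector field, $\DConf_n^{\sink}(G,V)$ is homotopy equivalent to a CW complex with exactly $m_i$ cells in dimension $i$, where $m_i$ is the number of critical $i$-cells of $X$; equivalently, the Morse complex $\M_\dt$ computing $H_\dt(\DConf_n^{\sink}(G,V))$ has $\M_i$ free of rank $m_i$. Working over $\Q$, the $i$-th homology of any chain complex of $\Q$-vector spaces is a subquotient of its $i$-th term, so
\[
\dim_\Q\big(H_i(\DConf_n^{\sink}(G,V))\big) \le m_i.
\]
This is the only place where passing to a minimal cellular/Morse model is needed, and it is exactly what Theorem \ref{dconfisdm} supplies.

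Next I would feed in the estimate of Proposition \ref{critclass},
\[
m_i \le \binom{|E|}{i}\binom{n}{i}\,i!\,\delta_i^{\,n-i},
\]
and rearrange. The factor $\binom{n}{i}\,i! = n(n-1)\cdots(n-i+1)$ is a polynomial in $n$ of degree exactly $i$, while $\binom{|E|}{i}$ is a constant and $\delta_i^{\,n-i} = \delta_i^{-i}\,\delta_i^{\,n}$ with $\delta_i^{-i}$ a constant. Setting
\[
p(n) := \frac{1}{\delta_i^{\,i}}\binom{|E|}{i}\,n(n-1)\cdots(n-i+1)
\]
then yields a polynomial of degree $i$, with leading coefficient $\binom{|E|}{i}/\delta_i^{\,i}$, satisfying $\dim_\Q\big(H_i(\DConf_n^{\sink}(G,V))\big) \le p(n)\,\delta_i^{\,n}$, as desired.

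There is no serious obstacle; the only point requiring a word of care is the degenerate range $i > |E|$, where $\binom{|E|}{i}=0$ makes the above $p$ identically zero. But in that range $H_i(\DConf_n^{\sink}(G,V)) = 0$ by the homological-dimension bound established earlier (homological dimension is at most $|E|$), so the asserted bound holds trivially, and one may if desired take $p$ to be any fixed degree-$i$ polynomial with positive leading coefficient to keep the literal ``degree $i$'' statement. I expect the write-up to be short, since the substantive work lies upstream in the construction of $X$ and the classification of its critical cells.
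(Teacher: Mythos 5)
Your argument is correct and is essentially the paper's own: the corollary is presented there as an immediate consequence of Theorem \ref{dconfisdm} (homology is computed by a complex with $m_i$ generators in degree $i$, so the rational Betti number is at most $m_i$) together with the critical-cell count $m_i \le \binom{|E|}{i}\binom{n}{i}\,i!\,\delta_i^{\,n-i}$ of Proposition \ref{critclass}, which is exactly what you combine. Your rewriting of the bound as $p(n)\delta_i^{\,n}$ with $\deg p = i$, and your remark on the degenerate range $i > |E|$, are both fine.
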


In later sections we will leverage Corollary \ref{expbound} to compute the homological dimension of $\DConf_n^{\sink}(G,V)$ for $n \gg 0$ (see Theorem \ref{homdim}).\\

\begin{remark}\label{oimod}
We note that the deformation retract of $\DConf_n(G,V)$ granted to us by discrete Morse theory will, in no obvious way, respect the action of $\FI_V$. One way to see this is to note that critical cells are not preserved by permuting coordinates due to the condition that no edge be order respecting. It is perhaps interesting to note that critical cells do have a natural action by the category $\OI_V$, which is the subcategory of $\FI_V$ whose morphisms are pairs $(f,g)$ where $f$ is an monotone (increasing) injection. Unfortunately, it is unclear whether this action commutes with the Morse differential, unless $(G,V,E)$ is a tree (see Theorem \ref{trivdiff}).\\
\end{remark}

In the next section we will compute the Morse differential (see Definition \ref{morsediff}) in the case where $(G,V,E)$ is a tree. This computation was accomplished by Farley for unordered configurations of trees in \cite{Far}. The techniques used in Farley's proof will not apply to this context, although certain core ideas of that work will be used. Note that there is very little work towards computing the Morse differential for arbitrary graphs in the classical configuration space setting, and this problem seems quite difficult (see \cite{KKP}\cite{KP}\cite{BF}).\\

\subsection{Applications of the two models to the case of trees}\label{treecaseI}

In this section we assume that $(G,V,E)$ is a tree. We will find that this class of graphs have sink configuration spaces which are much easier to study those of general graphs. One of the main tricks of this section will be to induct on the number of edges of $G$. To begin, we compute $\pi_1(\Conf_n^{\sink}(G,V))$.\\

\begin{theorem}\label{treepi}
Let $(G,V,E)$ be a tree. Then $\pi_1(\Conf_n^{\sink}(G,V))$ admits a presentation with $|E|\cdot((n-2)2^{n-1}+1)$ many generators wherein all relations are commutators.\\
\end{theorem}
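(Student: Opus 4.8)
The plan is to induct on the number of edges $|E|$ of the tree, using the fact that the Morse complex of Theorem~\ref{dconfisdm} has a number of critical cells that we can count exactly, and then to verify that the first homology $H_1$ (equivalently, the abelianization of $\pi_1$) together with the $K(\pi,1)$ property of Theorem~\ref{kpi1} controls the presentation. First, I would compute the precise number of critical $1$-cells and critical $0$-cells of the discrete gradient vector field $X$ on $\DConf_n^{\sink}(G,V)$ when $G$ is a tree. Since $G$ is a tree there are no deleted edges, so Proposition~\ref{critclass} says a critical $i$-cell is one in which every vertex is blocked and no edge is order respecting. For $i=0$ this is just an assignment of the $n$ coordinates to the $|V|$ vertices subject to the blocking condition, and for $i=1$ it is the choice of a single edge $e$, a coordinate occupying it whose index is \emph{not} smallest among those at $\tau(e)$ (to violate order-respecting), together with a blocked assignment of the remaining coordinates. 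Carrying out this count should produce exactly $|E|\cdot((n-2)2^{n-1}+1)$ critical $1$-cells, matching the generator count; the factor $2^{n-1}$ and the linear factor $(n-2)$ strongly suggest that, after fixing an edge, one distributes the remaining $n-1$ coordinates to the two endpoints of $e$ (giving $2^{n-1}$) and accounts for the non-minimal-index condition (giving the $(n-2)$ shift), plus a correction term.

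Second, I would invoke discrete Morse theory to translate this cell count into a presentation of the fundamental group. By Corollary~\ref{critdecomp}, $\Conf_n^{\sink}(G,V)$ is homotopy equivalent to a CW complex with exactly $m_i$ cells in dimension $i$; in particular $m_1$ equals the number of critical $1$-cells computed above. A CW complex with a single $0$-cell (or with a chosen spanning tree of its $1$-skeleton collapsed) has a standard presentation of $\pi_1$ in which the generators are the $1$-cells and the relations come from the attaching maps of the $2$-cells. The difficulty is that the discrete Morse complex need not have a single $0$-cell, so I would first reduce to a connected model: the spaces are connected (this follows from the collision structure, or can be extracted from the model), so I can collapse a maximal tree in the critical $1$-skeleton. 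The number of generators after this collapse is $m_1 - (m_0 - 1)$, and I must check that this equals the claimed $|E|\cdot((n-2)2^{n-1}+1)$; alternatively, since every relation is claimed to be a commutator, the abelianization argument below lets me sidestep the exact generator bookkeeping by showing the presentation complex is homotopy equivalent to a product-like space.

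Third, and this is where the real content lies, I would show \emph{every relation is a commutator}. The cleanest route is to exhibit each attaching map of a critical $2$-cell as a commutator of generators, which reduces to computing the Morse differential $\widetilde\partial:\M_2\to\M_1$ and, more refined, the attaching maps at the level of $\pi_1$. Here I would lean on the structure of a critical $2$-cell in a tree: it consists of two disjoint order-disrespecting-blocked edge choices, and the two edges, being in a tree, are either disjoint or share a vertex. In the disjoint case the two edge-moves commute geometrically (the corresponding loops can be performed independently in disjoint regions of $G$), which produces a commutator relation; the shared-vertex case must be analyzed using the blocking and depth-first labeling to see that the local rearrangement still factors as a commutator. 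The main obstacle I anticipate is precisely this local analysis of the boundary of a critical $2$-cell: one must check that the Morse differential, after applying $R^\infty$ to flow collapsible cells, lands on a commutator word rather than an arbitrary word in the generators. I expect the induction on $|E|$ to help here, peeling off a leaf edge of $G$ and using a Van Kampen decomposition of $\Conf_n^{\sink}(G,V)$ into the sink configuration space of the smaller tree glued along the configurations that use the leaf, so that the new relations introduced by the leaf are visibly commutators with the old generators. This matches the paper's own remark that the theorem is proven ``via an explicit induction argument and Van Kampen's Theorem,'' so I would treat the discrete Morse count as the source of the generator number and the inductive Van Kampen gluing as the source of the commutator relations.
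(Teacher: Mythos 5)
Your proposal splits into two halves, and they fare differently. The counting half is a genuinely different route from the paper's, and it actually works: for a tree the only critical $0$-cell is the one with all $n$ coordinates at the root (in a $0$-cell no edge is occupied, so a non-root vertex can never be blocked), so $m_0 = 1$ and your worry about collapsing a maximal tree in the critical $1$-skeleton evaporates. For a critical $1$-cell with edge $e$ occupied by coordinate $j$, the remaining coordinates must sit at the root or at $\tau(e)$ --- note: \emph{not} at the two endpoints of $e$ as you guessed, since $\iota(e)$ is unblocked unless it is the root --- and at least one coordinate at $\tau(e)$ must have index smaller than $j$. This gives $\sum_{j=1}^{n}\left(2^{n-1}-2^{n-j}\right) = (n-2)2^{n-1}+1$ critical $1$-cells per edge, exactly the claimed generator count. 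The paper gets this number by a different mechanism: $\pi_1(\DConf_n^{\sink}(\overline{e_0},\{v_0,v_1\}))$ is free because the discrete model of a single edge is $1$-dimensional, its rank $(n-2)2^{n-1}+1$ is quoted from \cite{CL}, and induction over edges multiplies by $|E|$. Your Morse count is a clean independent verification of the same number.

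The gap is in the commutator half, which is the real content of the theorem. Your step-3 plan --- showing the attaching maps of critical $2$-cells are commutators by analyzing the flow $R^\infty$ --- is not carried out, and there is no a priori reason the flowed attaching words are commutators (in the Farley--Sabalka setting for graph braid groups the analogous words are complicated, and here Theorem \ref{trivdiff} only tells you relations die upon abelianization, which is consistent with but far weaker than the relations \emph{being} commutators). Your fallback, the Van Kampen induction, is indeed the paper's actual proof, but your sketch of the decomposition is wrong in a way that matters: a two-piece gluing ``along the configurations that use the leaf'' does not give open sets with tractable intersections. The paper instead covers $\DConf_n^{\sink}(G,V)$ by the $2^n$ subcomplexes $A_\phi$ indexed by \emph{all} maps $\phi:[n]\rightarrow\{\overline{e_0},G'\}$, recording which coordinates live on the leaf edge and which on the complementary subtree; each $A_\phi$ is a product $\DConf^{\sink}_{|\phi^{-1}(G')|}(G',V')\times\DConf^{\sink}_{|\phi^{-1}(\overline{e_0})|}(\overline{e_0},\{v_0,v_1\})$, all pieces intersect in the basepoint $(v_1,\ldots,v_1)$, and Van Kampen over this cover kills every free factor except the two extremes $A_{\phi_0}, A_{\psi_0}$ while producing exactly the relations $[\gamma_\phi,\alpha_\phi]$ between loops supported on complementary coordinate sets --- that is where the commutators come from, as consequences of the product structure of each $A_\phi$. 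Without this finer cover your induction does not get off the ground, so as written the proposal establishes the generator count but not the form of the relations.
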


\begin{proof}

We will proceed by using Van Kampen's Theorem and induction on the number of edges of $G$. Begin by choosing a vertex of $G$ of valency 1, $v_0$. Call the unique edge adjacent to $v_0$, $e_0$, and call the other endpoint of $e_0$, $v_1$. Finally, let $(G',V',E')$ be the subtree of $(G,V,E)$ obtained by removing $e_0$ and $v_0$ (equivalently, by contracting $\overline{e_0}$).\\

To use Van Kampen's theorem, we will need to cover $\Conf_n^{\sink}(G,V)$ by sufficiently nice open sets. We will work instead with the first cellular model, $\DConf_n^{\sink}(G,V)$. Let $[n]^{\{\overline{e_0},G'\}}$ denote the collection of set maps $\phi:[n] \rightarrow \{\overline{e_0},G'\}$. Then we may define $A_{\phi}$ to be the subcomplex of $\DConf_n^{\sink}(G,V)$ of all tuples $(x_1,\ldots,x_n)$ such that $x_i$ is an element of $\phi(i)$. For example, if $(G,V,E)$ is the tree of Figure \ref{labeledtree}, and $v_0$ is the vertex labeled 3, then the cell of Figure \ref{cellexample} is an example of a cell in $A_{\phi}$ where $\phi$ is the map
\[
\phi(\{1,4,5,6\}) = G', \phi(\{2,3\}) = \overline{e_0}.
\]
For each $\phi \in [n]^{\{\overline{e_0},G'\}}$ we may find some open subset $U_\phi$ of $\DConf_n^{\sink}(G,V)$ which deformation retracts onto $A_\phi$. In particular, we may set $U_\phi$ to be $A_\phi$ with the added condition that coordinates located on $v_1$ may move within a small ball. Then $\{U_\phi\}_{\phi}$ is an open cover of $\DConf_n^{\sink}(G,V)$. Note that because we are working within $\DConf_n(G,V)$, it is never the case that more than one coordinate is on the interior of an edge of $G$ at any time. Therefore, $U_\phi$ may be deformation retracted onto $A_\phi$ by simply sliding any points which strayed from $v_1$ back onto $v_1$. For the same reason, any intersection $\cap_{\phi} U_\phi$ will deformation retract onto the intersection $\cap_\phi A_\phi$. We may therefore proceed with our Van Kampen computation by computing $\pi_1(A_\phi)$. First observe that
\begin{eqnarray}
A_\phi &\cong& \DConf^{\sink}_{|\phi^{-1}(G')|}(G',V') \times \DConf^{\sink}_{|\phi^{-1}(\overline{e_0})|}(\overline{e_0},\{v_0,v_1\}),\\ \label{observation1}
A_\phi \cap A_\psi &\cong& \DConf^{\sink}_{|\psi^{-1}(G') \cap \phi^{-1}(G')|}(G',V') \times \DConf^{\sink}_{|\psi^{-1}(\overline{e_0}) \cap \phi^{-1}(\overline{e_0})|}(\overline{e_0},\{v_0,v_1\}).\label{observation2}
\end{eqnarray}
Also observe that $\cap_{\phi} A_\phi$ is precisely $\{(v_1,\ldots,v_1)\}$. Moreover, it is easily seen that for any triple of maps $\phi_1,\phi_2,\phi_3$, that $\cap_{i} A_{\phi_i}$ is connected. It follows that we may use the cover $\{U_\phi\}_\phi$ to compute $\pi_1(\DConf_n^{\sink}(G,V),(v_1,\ldots,v_1))$. Because $\DConf_n^{\sink}(G,V)$ is path connected, this will suffice. We have,
\begin{eqnarray}
\pi_1(\DConf_n^{\sink}(G,V)) = \star_{\phi} \pi_1(A_\phi) / N, \label{vct}
\end{eqnarray}
where $\star$ is the free product operator of groups, and $N$ is the normal subgroup of $\star_{\phi} \pi_1(A_\phi)$ generated by the inclusions of $\pi_1(A_\phi \cap A_\psi)$ into $\pi_1(A_\phi)$ and $\pi_1(A_\psi)$ for each pair of maps $\phi,\psi$.\\

The observations (\ref{observation1}) and (\ref{observation2}) allow us to compute
\begin{eqnarray*}
\pi_1(A_\phi \cap A_\psi) \cong  &\pi_1(\DConf_{|\psi^{-1}(G') \cap \phi^{-1}(G')|}(G',V'))&\\ 
                                           &\bigoplus \pi_1(\DConf_{|\psi^{-1}(\overline{e_0}) \cap \phi^{-1}(\overline{e_0})|}(\overline{e_0},\{v_0,v_1\})).&
\end{eqnarray*}
We visualize loops here as a loop using $|\psi^{-1}(G') \cap \phi^{-1}(G')|$ coordinates moving around $G'$, $|\psi^{-1}(\overline{e_0}) \cap \phi^{-1}(\overline{e_0})|$ coordinates moving around $e_0$, and all remaining coordinates sitting stationary on $v_1$. Such loops will exist within both $A_\phi$ and $A_\psi$, and Van Kampen's theorem amounts to saying that such loops should be identified in the free product $\star_{\phi} \pi_1(A_\phi)$. Let $\phi_0 \in [n]^{\{\overline{e_0},G'\}}$ be the map which sends all elements of $[n]$ to $G'$, and let $\psi_0$ be the map which sends every element to $e_0$. Then for any $\phi$, the fundamental group of $A_{\phi_0} \cap A_{\phi}$ is generated by precisely those loops in $A_{\phi}$ which don't use the edge $e_0$. Similarly, $A_\phi \cap A_{\psi_0}$ are those loops of $A_\phi$ which don't use $G'$. The description (\ref{observation1}) implies that these two classes of loops actually generate $\pi_1(A_\phi)$. By varying across all choices of $\phi$, it follows that all terms in the free product (\ref{vct}) are killed in the quotient, except for $\pi_1(A_{\phi_0})$ and $\pi_1(A_{\psi_0})$. Moreover, we have to account for the relations that loops which only use $G'$, and loops which only use $\overline{e_0}$, commute with one another in $\pi_1(A_\phi)$. For each loop $\gamma^{(m)} \in \pi_1(\Conf^{\sink}_{m}(G',V'))$, let $\gamma^{(m)}_{\phi}$ be the loop in $\pi_1(\Conf_n^{\sink}(G,V))$ which replicates $\gamma^{(m)}$ using the coordinates in $\phi^{-1}(G')$, while keeping the coordinates of $\phi^{-1}(\overline{e_0})$ fixed on $v_1$. Similarly define $\alpha^{(m)}_\phi$ for any $\alpha^{(m)} \in  \pi_1(\Conf^{\sink}_{m}(\overline{e_0},\{v_0,v_1\})$. Then,
\[
\pi_1(\Conf_n^{\sink}(G,V)) \cong \pi_1(\Conf^{\sink}_n(G',V')) \star \pi_1(\Conf^{\sink}_{n}(\overline{e_0},\{v_0,v_1\})) / N'
\]
where $N'$ is the normal subgroup generated by commutators $[\gamma^{(m)}_\phi,\alpha^{(m-|\phi^{-1}(G')|)}_\phi]$. We note that $\DConf_n^{\sink}(\overline{e_0},\{v_0,v_1\})$ is a 1-dimensional CW-complex, and so its fundamental group is free. The rank of $\pi_1(\DConf_n^{\sink}(\overline{e_0},\{v_0,v_1\})$ was computed in \cite[Proposition 2.2]{CL} to be $(n-2)2^{n-1} + 1$.  Our result now follows by induction.\\
\end{proof}

In the specific case wherein $(G,V,E)$ is a line segment with three vertices, the above proof implies that $\pi_1(\Conf_n^{\sink}(G,V,E))$ admits a presentation for which every relation is a commutator of generators. Such groups are usually called \textbf{right-angled Artin}. See \cite{Cha} for an introduction to the theory of these groups.\\

Our next goal will be to compute the homology groups of $\Conf_n^{\sink}(G,V)$ using the second model.\\

Assume that the vertices of $G$ are labeled as in Section \ref{dmtgraph}. We let $(G',V',E')$ be the subtree of $(G,V,E)$ which does not contain the vertex of largest label, i.e. the tree obtained $(G,V,E)$ by contracting the unique edge adjacent to the vertex of highest label. This induces an ordering on the vertices of $G'$. Call the vertex of highest label of $G$, $v_0$ and its incident edge $e_0$. Our first key observation will allow us to relate properties of the space $\Conf_n^{\sink}(G,V)$ to those of $\Conf_n^{\sink}(G',V')$.\\

\begin{definition}
Let $(G,V,E)$ and $(G',V',E')$ be as above. Then there is a retraction $r_n:\Conf_n^{\sink}(G,V) \rightarrow \Conf_n^{\sink}(G',V')$ induced by contraction of $e_0$. In particular, the inclusion $\Conf_n^{\sink}(G',V') \hookrightarrow \Conf_n^{\sink}(G,V)$ induces a split injection for each $i$, $H_i(\Conf_n^{\sink}(G',V')) \hookrightarrow H_i(\Conf_n^{\sink}(G,V))$. \\
\end{definition}

Note that the existence of the retraction is almost entirely unique to trees. While one may always define a map induced by contracting an edge, there is no general way to embed the resulting graph back into the original graph. This elementary property of trees will allow us to provide induction arguments for many theorems which strengthen the work of the previous section.\\

Our goal will be to prove that the Morse differential is trivial in the case of trees. To prove this, we will need to develop a better understanding of the Morse differential in this case. While we have previously defined this differential, the spaces $\DConf_n(G,V)$ are nice enough that we may simplify the previously described expression. Note that the following description appears in \cite{Far}\\

\begin{definition}\label{morsediffrefine}
As before, let $\K^G_{i,n}$ be the free $\Z$-module on the $i$-cells of $\DConf_n^{\sink}(G,V)$ and let $\M_{i,n}^G$ be the free $\Z$ module on the critical cells of $G$. We identify each edge $e$ of $E$ with the interval $[0,1]$ by setting $\tau(e) = 0, \iota(e) = 1$. In this case, the differential $\partial:\K^G_{i,n} \rightarrow \K^G_{i-1,n}$ can be expressed
\[
\partial(\sigma_1 \times \ldots \times \sigma_n) = \sum_{\sigma_j \text{ an edge of $\sigma$}} \left( (\sigma_1 \times \ldots \times \sigma_{j-1} \times \iota(\sigma_j) \times \sigma_{j+1}) - (\sigma_1 \times \ldots \times \sigma_{j-1} \times \tau(\sigma_j) \times \sigma_{j+1}) \right).
\]
Next, we observe that the discrete vector field $X$ induces a map $\K_{i,n}^G  \rightarrow \K_{i+1,n}^G$ by
\[
X(\sigma) = \begin{cases} X(\sigma) &\text{ if $\sigma$ is redundant}\\ 0 &\text{ otherwise}\end{cases}
\]
Then we may define $F:\K_{i,n}^G \rightarrow \K_{i,n}^G$, 
\[
F(\sigma) = (1+\partial X)(\sigma)
\]
Because $X$ lacks non-trivial closed paths, it is a fact that $F^m(\sigma) = F^{m+1}(\sigma)$ for all $m \gg 0$ \cite{Fo1}, and we set $F^\infty$ to be this stable value. Then the Morse differential is given by
\[
\widetilde{\partial} := \pi F^{\infty} \partial
\]
where $\pi:\K_{i,n}^G \rightarrow \M_{i,n}^G$ is the projection onto the critical cells.\\
\end{definition}

To make more sense of the proof of Theorem \ref{trivdiff}, we take a moment to develop a visual picture of what it's action looks like. One begins with a critical cell $\sigma$, and produces all faces of $\sigma$ by \textbf{disassembling} its edges one-by-one (see Figure \ref{disassemble}). These faces are then put in a linear combination, accounting for signs appropriately. Each of these faces $\sigma'$ will either be redundant, or collapsible. In the latter case, the cell is removed from the linear combination. In the redundant case, One produces all faces of $X(\sigma')$, once again by disassembling its edges one by one. These faces are then placed in linear combination. The cell $\sigma'$ is replaced in our linear combination by the new linear combination of faces of $X(\sigma')$, excluding $\sigma'$ itself. This processes is repeated, until all cells which appear are critical. In particular, we think of the Morse differential as beginning with the faces of the inputted critical cells and ``flowing,'' in all possible ways using the vector field $X$, from these faces to critical cells.

\begin{figure}
\begin{tikzpicture}

\draw [line width=1mm] (1.5,0) -- node[above] {3} (3,0);
\draw [line width=1mm] (1.5,0) -- node[right] {5} (1.5,1.5);
\draw [line width=1mm] (7.5,0) -- node[above] {3} (9,0);
\draw [line width=1mm] (14.5,0) -- node[above] {3} (16,0);
\draw [line width = .5mm, decoration={brace,raise=5pt},decorate]
			(5.5,-0.5)--(5.5,2);
\draw [line width = .5mm, decoration={brace,mirror,raise=5pt},decorate]
			(16.5,-0.5)--(16.5,2);

\tikzstyle{every node}=[draw,circle,fill=white,minimum size=19pt,
                            inner sep=0pt]
    \draw (0,0) node (0)  {1,4}
					--(1.5,0) node (2) {}
					--(1.5,1.5) node (4) {6}
    			(2) -- (3,0) node (7) {2};
					
		\draw[->, line width=.5mm]
		     (3.5, .5) -- (4.5,.5);
		\draw (6,0) node (1)  {1,4}
					--(7.5,0) node (3) {5}
					--(7.5,1.5) node (5) {6}
    			(3) -- (9,0) node (6) {2};
					
		\draw[-, line width=.5mm]
		     (10.5, .5) -- (11.5,.5);
			
		\draw (13,0) node (8)  {1,4}
					--(14.5,0) node (9) {}
					--(14.5,1.5) node (10) {5,6}
    			(9) -- (16,0) node (11) {2};

\end{tikzpicture}
\caption{An example of producing two faces of a cell by disassembling an edge. Here, the edge being disassembled is the edge occupied by coordinate index 5. If we label our tree's vertices as in Figure \ref{labeledtree}, then the signs of the two faces are determined by whether or not the 5 is placed on a smaller vertex (positive) or a larger one (negative).}\label{disassemble} 
\end{figure}
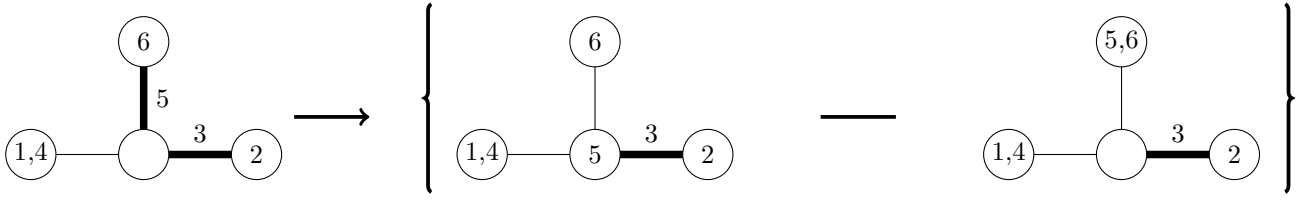

\begin{lemma}
The inclusion $\DConf_n^{\sink}(G',V') \hookrightarrow \DConf_n^{\sink}(G,V)$ respects the discrete Morse structure on these spaces. That is to say, cells of $\DConf_n^{\sink}(G',V')$ are critical, redundant, or collapsible if and only if they are critical, redundant, or collapsible, respectively, in $\DConf_n^{\sink}(G,V)$. Moreover, the inclusion induces an inclusion of complexes $\M_{\dt,n}^{G'} \hookrightarrow \M_{\dt,n}^{G}$.\\
\end{lemma}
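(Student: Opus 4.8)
The plan is to exhibit $\DConf_n^{\sink}(G',V')$ as the subcomplex of $\DConf_n^{\sink}(G,V)$ consisting of those cells $\sigma = \sigma_1 \times \ldots \times \sigma_n$ no coordinate of which is $v_0$ or $e_0$, and then to check that every ingredient entering the definition of $X$ and of the Morse differential is computed identically in $G$ and in $G'$ on such cells. The essential structural point is that $G'$ is obtained by deleting the maximum-label leaf $v_0$ together with its unique incident edge $e_0$, so the depth-first labeling of $G$ restricts to that of $G'$, the root is unchanged, and for every vertex $w \neq v_0$ the parent edge $e(w)$ (the unique edge whose top is $w$) is an edge of $G'$: indeed $e(w) = e_0$ would force $w = \tau(e_0) = v_0$. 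Consequently $e_G(w) = e_{G'}(w)$ for all $w \in V'$, and this single fact is the linchpin that makes every later identification immediate.

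First I would show that the two local invariants governing the classification of cells agree. For a cell $\sigma$ of $\DConf_n^{\sink}(G',V')$, a vertex-coordinate $\sigma_j$ is blocked in $G$ if and only if it is blocked in $G'$: the root condition is identical, and since $e_G(\sigma_j) = e_{G'}(\sigma_j)$ lies among the $G'$-edges, the condition $e(\sigma_j) = \sigma_l$ holds in $G$ exactly when it holds in $G'$. Likewise, for an edge-coordinate $\sigma_j$ the top $\tau(\sigma_j)$ and the set of coordinates occupying it are unchanged, so $\sigma_j$ is order respecting in $G$ if and only if it is in $G'$. By the local description of the three cell types recorded in the proof of Proposition \ref{critclass}, which expresses whether $\sigma$ is critical, redundant, or collapsible purely in terms of its smallest unblocked vertex and its order-respecting edge of minimal top, it follows at once that $\sigma$ has the same type in $G$ and in $G'$. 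This proves the first assertion.

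Next I would verify that $X$ itself restricts: if $\sigma$ is a redundant cell of $\DConf_n^{\sink}(G',V')$, then its smallest unblocked vertex $w$ and the chosen coordinate are the same in either ambient space, and $X^G(\sigma)$ replaces $w$ by $e_G(w) = e_{G'}(w)$, an edge of $G'$; hence $X^G(\sigma) = X^{G'}(\sigma)$ is again a cell of $G'$. Combined with the fact that the cubical boundary $\partial$ sends a $G'$-cell to a $\Z$-combination of $G'$-cells (the faces $\tau(\sigma_j),\iota(\sigma_j)$ of an edge of $G'$ are vertices of $G'$), this shows that both $\partial$ and $F = 1+\partial X$ of Definition \ref{morsediffrefine} preserve the subgroup of chains supported on cells of $G'$ and agree there with the corresponding operators for $G'$. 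Therefore $F^\infty$ preserves this subgroup, and for a critical cell $\sigma$ of $G'$ the element $\widetilde{\partial}^G(\sigma) = \pi F^\infty \partial(\sigma)$ is a combination of critical cells of $G$ that all lie in $G'$, and equals $\widetilde{\partial}^{G'}(\sigma)$. Since distinct critical cells of $G'$ remain distinct in $G$, sending each critical cell of $G'$ to itself defines an injective chain map $\M_{\dt,n}^{G'} \hookrightarrow \M_{\dt,n}^{G}$, as desired.

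I expect the only genuinely delicate step to be the compatibility of the Morse differentials, since $\widetilde{\partial}$ is defined through the possibly long iterated flow $F^\infty$ rather than a single local move; the argument hinges on checking that this flow never leaves the subcomplex spanned by $G'$-cells, which in turn rests entirely on the invariance of $e(\cdot)$, $\tau(\cdot)$, and ``blocked'' established in the first step. The verification that $G'$ sits inside $G$ compatibly with the labeling is routine, but it is exactly what lets the classification of cells, the vector field $X$, and the operators $\partial$ and $F$ all be computed without reference to $v_0$ or $e_0$.
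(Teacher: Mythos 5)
Your proof is correct and follows essentially the same route as the paper's: both hinge on $v_0$ being the highest-labeled leaf, so that the labeling, the root, $e(\cdot)$, $\tau(\cdot)$, blockedness, and order-respecting edges are computed identically in $G$ and $G'$, whence the classification of cells and the vector field $X$ restrict to the subcomplex. You make explicit the compatibility of the Morse differentials via the invariance of $\partial$, $F$, and $F^\infty$ on chains supported on $G'$-cells, a point the paper leaves implicit under ``similar arguments,'' but the underlying argument is the same.
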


\begin{proof}
This follows from the fact that we chose $v_0$ to have the highest label in $G$. If $\sigma$ is a critical cell of $\DConf_n^{\sink}(G',V')$, then all vertices of $\sigma$ must be blocked, and all edges must be order respecting. Applying the inclusion into $\DConf_n^{\sink}(G,V)$ cannot create unblocked vertices, as $v_0$ has the highest label in $G$, and it clearly cannot created order respecting edges. Similar arguments work for the other two types of cells as well.\\\
\end{proof}

\begin{lemma}\label{lem1}
There is an exact sequence of complexes,
\[
0 \rightarrow \M_{\dt,n}^{G'} \rightarrow \M_{\dt,n}^{G} \rightarrow Q_{\dt,n}  \rightarrow 0,
\]
inducing isomorphisms for all $i \geq 0$,
\[
H_i(\Conf_n^{\sink}(G,V)) \cong H_i(\Conf_n^{\sink}(G',V')) \oplus H_i(Q_{\dt,n} ).
\]
\text{}\\
\end{lemma}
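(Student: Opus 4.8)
The plan is to define $Q_{\dt,n}$ so that the short exact sequence is essentially tautological, and then to extract the homological splitting from the associated long exact sequence together with the retraction $r_n$ obtained by contracting $e_0$.

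First, the preceding lemma already tells us that the inclusion $\DConf_n^{\sink}(G',V') \hookrightarrow \DConf_n^{\sink}(G,V)$ respects the discrete Morse structure and induces an honest chain map which is an inclusion of complexes $\M_{\dt,n}^{G'} \hookrightarrow \M_{\dt,n}^{G}$. This map is degreewise injective (the critical $i$-cells of $G'$ form a subset of those of $G$, compatibly with $\widetilde{\partial}$), so I would simply define $Q_{\dt,n}$ to be its degreewise cokernel. Since the differentials are compatible, $Q_{\dt,n}$ inherits a differential and we obtain the short exact sequence of complexes
\[
0 \rightarrow \M_{\dt,n}^{G'} \rightarrow \M_{\dt,n}^{G} \rightarrow Q_{\dt,n} \rightarrow 0
\]
at once. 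This disposes of the first assertion.

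Second, this short exact sequence of complexes yields a long exact sequence in homology. Using Theorem \ref{morsediffcomp} and the homotopy equivalence $\DConf_n^{\sink} \simeq \Conf_n^{\sink}$, I identify $H_i(\M_{\dt,n}^{G}) \cong H_i(\Conf_n^{\sink}(G,V))$ and likewise for $G'$. The essential input is that the map $H_i(\M_{\dt,n}^{G'}) \to H_i(\M_{\dt,n}^{G})$ is split injective. Under the identifications above, this map agrees with the map on homology induced by the inclusion of spaces, by naturality of the discrete-Morse identification with singular homology — which is exactly what the preceding lemma guarantees, since the inclusion is compatible with the two Morse structures. But the inclusion-induced map on $H_i$ is split injective: the retraction $r_n$ contracting $e_0$ satisfies $r_n \circ (\text{inclusion}) = \mathrm{id}$ on $\Conf_n^{\sink}(G',V')$, because contracting $e_0$ does nothing to configurations already supported on $G'$, so $(r_n)_\as$ splits the inclusion on homology, precisely as recorded in the Definition preceding this lemma.

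Finally, split injectivity of $H_{i-1}(\M_{\dt,n}^{G'}) \to H_{i-1}(\M_{\dt,n}^{G})$ forces each connecting homomorphism $H_i(Q_{\dt,n}) \to H_{i-1}(\M_{\dt,n}^{G'})$ to vanish, as its image lies in the kernel of that injection. The long exact sequence therefore collapses into short exact sequences
\[
0 \rightarrow H_i(\M_{\dt,n}^{G'}) \rightarrow H_i(\M_{\dt,n}^{G}) \rightarrow H_i(Q_{\dt,n}) \rightarrow 0
\]
which split because the left-hand map does, giving the claimed decomposition after rewriting via the homology identifications. The step I expect to be the genuine obstacle is the compatibility check: verifying that the algebraic map $H_i(\M_{\dt,n}^{G'}) \to H_i(\M_{\dt,n}^{G})$ really coincides with the topological split injection on configuration-space homology. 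This is where the preceding lemma must be used carefully to ensure the identification of Morse homology with singular homology is natural with respect to the compatible inclusion; once that is secured, everything else is formal homological algebra.
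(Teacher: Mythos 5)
Your proposal is correct and takes essentially the same route as the paper: the paper likewise defines $Q_{\dt,n}$ as the degreewise cokernel of the inclusion $\M_{\dt,n}^{G'} \hookrightarrow \M_{\dt,n}^{G}$, passes to the long exact sequence on homology, identifies Morse homology with the homology of the sink configuration spaces, and uses split injectivity of the inclusion-induced map (via the retraction $r_n$ contracting $e_0$) to collapse the sequence. The compatibility check you flag as the genuine obstacle is exactly the step the paper dispatches with ``by construction,'' so your version is, if anything, more explicit about that point.
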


\begin{proof}
From the previous lemma we know that the inclusion $\Conf_n^{\sink}(G',V') \hookrightarrow \Conf_n^{\sink}(G,V)$ induces an exact sequence of complexes 
\[
0 \rightarrow \M_{\dt,n}^{G'} \rightarrow \M_{\dt,n}^{G} \rightarrow Q_{\dt,n}  \rightarrow 0.
\]
The long exact sequence on homology yields
\[
\ldots \rightarrow H_i(\M_{\dt,n}^{G'}) \rightarrow H_i(\M_{\dt,n}^{G}) \rightarrow H_i(Q_{\dt,n} ) \rightarrow \ldots,
\]
We know that $H_i(\M_{\dt,n}^{G'}) \cong H_i(\Conf_n^{\sink}(G',V'))$, and $H_i(\M_{\dt,n}^{G}) \cong H_i(\Conf_n^{\sink}(G,V))$. Moreover, by construction, the map $H_i(\M_{\dt,n}^{G'}) \rightarrow H_i(\M_{\dt,n}^{G})$ is induced by the inclusion of $\Conf_n^{\sink}(G',V')$ into $\Conf_n^{\sink}(G,V)$. This induced map is a split injection on homology by previous remarks. This concludes the proof.\\
\end{proof}

\begin{remark}
It will follows from the proof of the next theorem that the exact sequence
\[
0 \rightarrow \M_{\dt,n}^{G'} \rightarrow \M_{\dt,n}^{G} \rightarrow Q_{\dt,n} \rightarrow 0,
\]
splits. While one might think this is obvious, considering that the inclusion of $\Conf_n^{\sink}(G',V')$ into $\Conf_n^{\sink}(G,V)$ admits a retraction, it is actually a bit more challenging. Indeed, it is actually non-obvious whether the map induced by this retraction respects the Morse structure on the spaces. The issue here is that it is possible to apply the retraction to redundant cells, and obtain either a critical or collapsible cell in return.\\
\end{remark}

These two lemmas are all we will need to prove that the Morse differential is actually trivial.\\

\begin{theorem}\label{trivdiff}
Let $(G,V,E)$ be a tree. Then the Morse differential of $\M_{\dt,n}^{G}$ is trivial.\\
\end{theorem}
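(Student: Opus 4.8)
The plan is to induct on the number of edges $|E|$, using the short exact sequence of Lemma \ref{lem1} to peel off one leaf at a time. The base case $|E| = 0$ is a single vertex, where $\DConf_n^{\sink}(G,V)$ is a point and $\M_{\dt,n}^{G}$ is concentrated in degree $0$, so $\widetilde{\partial} = 0$ trivially. For the inductive step, recall that $(G',V',E')$ is obtained by contracting the edge $e_0$ adjacent to the maximal-label leaf $v_0$, and that the preceding lemma shows $\M_{\dt,n}^{G'}$ is a subcomplex of $\M_{\dt,n}^{G}$ on which the ambient Morse differential restricts to the Morse differential of $G'$. By the inductive hypothesis the latter vanishes, so $\widetilde{\partial}$ is already zero on $\M_{\dt,n}^{G'}$, and it suffices to show $\widetilde{\partial}\sigma = 0$ for each critical cell $\sigma$ generating the quotient $Q_{\dt,n}$.

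Next I would pin down exactly which critical cells generate $Q_{\dt,n}$. By Proposition \ref{critclass} a critical cell has every vertex blocked and no order-respecting edge; since $v_0$ is a leaf whose incident edge $e_0$ is the unique edge with $\tau(e_0) = v_0$, a coordinate may sit on $v_0$ only if $e_0$ also appears (otherwise $v_0$ is unblocked), and $e_0$ fails to be order-respecting only if some coordinate on $v_0$ carries an index smaller than the coordinate occupying $e_0$. Hence the generators of $Q_{\dt,n}$ are precisely the critical cells containing $e_0$ together with at least one lower-index coordinate parked on $v_0$; every other critical cell is a critical cell of $G'$. This is the reduction: I must verify $\widetilde{\partial}\sigma = 0$ for such $\sigma$.

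The computation itself I would run through the refined description $\widetilde{\partial} = \pi F^{\infty}\partial$ with $F = 1 + \partial X$ from Definition \ref{morsediffrefine}, visualized as the disassembling-and-flowing process of Figure \ref{disassemble}. Writing $\partial\sigma$ as the signed sum, over the $i$ edges of $\sigma$, of the two faces obtained by pushing each edge's coordinate to its bottom $\iota$ or top $\tau$, I would check that every such face is redundant or collapsible (collapsible faces die immediately under $\pi$, and redundant faces must be flowed). The crucial local feature is that $v_0$ is a leaf of maximal label: any face in which a coordinate lands on $v_0$ with $e_0$ deleted has $v_0$ unblocked, and the vector field must re-expand $e_0$, so the two faces coming from disassembling $e_0$ (its coordinate pushed onto $v_0$ versus down onto $v_1$) are driven by $F^{\infty}$ to a common critical cell and cancel with opposite signs --- just as in the simplest case of a single edge, where both faces flow to the critical $0$-cell with all coordinates at the root. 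I would then argue that this cancellation persists globally, the boundary faces of $\sigma$ organizing into sign-reversing pairs whose stable images agree.

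The main obstacle is precisely this last bookkeeping: proving that the full set of $2i$ boundary faces, after the potentially long gradient flows encoded by $F^{\infty}$, contributes zero to every critical $(i-1)$-cell. The cleanest way to close it is to exhibit an explicit sign-reversing involution on the gradient paths running from faces of $\sigma$ to critical cells, obtained by toggling the behavior of the flow at the leaf $v_0$ and its edge $e_0$; verifying that this toggle is well defined, fixed-point free, and sign-reversing is the technical heart of the argument, and it is where the hypothesis that $G$ is a tree --- guaranteeing that $e_0$ is never a deleted edge and that $v_0$ is a genuine leaf of top label --- is indispensable.
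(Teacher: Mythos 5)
Your skeleton is the same as the paper's: induction on $|E|$, the short exact sequence of Lemma \ref{lem1}, and an analysis of $\widetilde{\partial} = \pi F^{\infty}\partial$ on critical cells containing $e_0$, exploiting that $v_0$ carries the maximal label. But your closing step has a genuine gap. You reduce to showing $\widetilde{\partial}\sigma = 0$ \emph{in} $\M_{\dt,n}^{G}$ for each critical $\sigma$ containing $e_0$, and this requires controlling the components of $\pi F^{\infty}\partial(\sigma)$ on critical cells \emph{not} containing $e_0$, i.e.\ on $\M_{\dt,n}^{G'}$: once $e_0$ is disassembled, the gradient flow can shed $e_0$ and terminate on honest critical cells of $G'$, and nothing in your argument shows these contributions cancel. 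Your proposed remedy --- a sign-reversing involution obtained by toggling the flow at $v_0$ and $e_0$ --- is precisely the hard bookkeeping, it is never constructed, and even as stated it is doubtful: the two faces obtained by disassembling $e_0$ do not in general flow ``to a common critical cell''; $F^{\infty}$ branches, so any cancellation would have to be organized across sums over many gradient paths, which is exactly what you defer.

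The paper closes the argument differently, and more cheaply. It proves only that the induced differential on the \emph{quotient} complex $Q_{\dt,n}$ is trivial: there, cells not containing $e_0$ are zero by definition of the quotient; cells in which $e_0$ is order respecting are non-critical and killed by $\pi$ (this is where maximality of the label of $v_0$ enters); and paths that never disassemble $e_0$ reproduce the differential of $\M_{\dt,m}^{G'}$ applied to the cell obtained by stripping the coordinates on $e_0$ and $v_0$, which vanishes by induction. With both the sub- and quotient differentials trivial, Lemma \ref{lem1} gives $H_i(\M_{\dt,n}^{G}) \cong \M_{i,n}^{G'} \oplus Q_{i,n}$, a free abelian group of rank equal to the number of critical $i$-cells; and a complex of finitely generated free $\Z$-modules whose homology is term-wise isomorphic to the complex itself must have zero differential. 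This rank-counting step is what substitutes for your involution; without it, or without actually constructing the involution and verifying it is well defined, fixed-point free, and sign-reversing, your proof does not close.
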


\begin{proof}
We will proceed by induction on the number of edges of $G$. If $G$ has zero edges, then the claim is clear. Otherwise, let $(G',V',E')$ be the subtree of $(G,V,E)$ described in the beginning of this section. Then Lemma \ref{lem1} implies that there is an exact sequence of complexes
\[
0 \rightarrow \M_{\dt,n}^{G'} \rightarrow \M_{\dt,n}^{G} \rightarrow Q_{\dt,n}  \rightarrow 0.
\]
By induction, the differential of $\M_{\dt,n}^{G'}$ is trivial, and we claim that the differential of $Q_{\dt,n} $ is trivial as well. Note that, for any $i$, $Q_{i,n}$ is the free $\Z$-module on critical cells which contain the edge $e_0$. If $\sigma \in Q_{i,n}$ is a critical cell, then those faces of $\sigma$ produced by disassembling $e_0$ must be sent to zero by $\pi F^{\infty}$. Calling either such face $\alpha$, any cell appearing in $F^{\infty}(\alpha)$ will either not contain $e_0$, or $e_0$ will be order respecting. This is because $v_0$ was chosen to be the largest vertex of $G$. In either case, the differential of $Q_{\dt,n} $ will map these cells to zero.\\

It follows that the only faces of $\sigma$ which are not immediately sent to zero are those which contain the edge $e_0$. For the same reasons as those given above, as one repeatedly applies $F$, anytime a cell is chosen by disassembling $e_0$ all cells which result in $F^{\infty}$ will be sent to zero by $\pi$. Therefore, the only cases where the differential of $Q_{\dt,n} $ might produce non-trivial terms is when one proceeds by a path that never disassembles $e_0$. However, such a path will necessarily be the same as a path which exists entirely within $\DConf_m^{\sink}(G',V')$, where $m$ is $n$ minus the number of coordinates which occupy either $e_0$ or $v_0$. By how the Morse differential is defined, this will be indistinguishable from an application of the differential of $\M_{\dt,m}^{G'}$ on the critical $(i-1)$-cell of $\DConf_m^{\sink}(G',V')$ obtained from $\sigma$ by ignoring the coordinates occupying $e_0$ and $v_0$, and reordering the coordinates which remain (See Figure \ref{ignore}). By induction this is zero. We conclude that the differential of both $\M_{\dt,n}^{G'}$ and $Q_{\dt,n} $ are trivial. Lemma \ref{lem1} implies that $H_i(\Conf_n^{\sink}(G,V))$ is free on the critical cells of $\Conf_n^{\sink}(G,V)$. However, the only way a complex of free $\Z$-modules can have homology which is term-wise isomorphic to the complex is if the complex has trivial differentials. This completes the proof.\\
\end{proof}

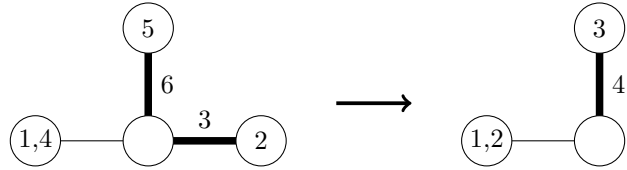
\begin{figure}
\begin{tikzpicture}
\draw [line width=1mm] (1.5,0) -- node[above] {3} (3,0);
\draw [line width=1mm] (1.5,0) -- node[right] {6} (1.5,1.5);
\draw [line width=1mm] (7.5,0) -- node[right] {4} (7.5,1.5);

\tikzstyle{every node}=[draw,circle,fill=white,minimum size=19pt,
                            inner sep=0pt]
    \draw (0,0) node (0)  {1,4}
					--(1.5,0) node (2) {}
					--(1.5,1.5) node (4) {5}
    			(2) -- (3,0) node (7) {2};
		\draw[->, line width=.5mm]
		     (4, .5) -- (5,.5);
		\draw (6,0) node (1)  {1,2}
					--(7.5,0) node (3) {}
					--(7.5,1.5) node (5) {3};
		 
\end{tikzpicture}
\caption{An example of taking a critical cell $\sigma \in Q_{\dt,6}$, and obtaining a critical cell of $\DConf_4^{\sink}(G',V')$ by ignoring $v_0$ and $e_0$, and relabeling the remaining coordinates. Note that our tree is labeled as in Figure \ref{labeledtree}.}\label{ignore}
\end{figure}

This theorem has a plethora of implications, which we record now. Note that there are some implications which we will hold off on addressing until Section \ref{repstab}.\\

\begin{corollary}\label{bigtreecor}
Let $(G,V,E)$ be a tree. Then for all $n,i \geq 0$
\begin{enumerate}
\item $H_i(\Conf_n^{\sink}(G,V))$ is torsion free;
\item the homological dimension of $\Conf_n^{\sink}(G,V)$ is given by
\[
\hdim(\Conf_n^{\sink}(G,V)) = \min\{\lfloor \frac{n}{2} \rfloor, |E|\}
\]
\item the groups $H_i(\Conf_n^{\sink}(G,V))$ depend only on $i,n,$ and $|E|$.\\
\end{enumerate}
\end{corollary}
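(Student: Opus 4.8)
The plan is to deduce all three statements from Theorem \ref{trivdiff}, which tells us that the Morse complex $\M_{\dt,n}^G$ has vanishing differential, so that $H_i(\Conf_n^{\sink}(G,V)) \cong \M_{i,n}^G$ is the free abelian group on the critical $i$-cells of $X$. Part (1) is then immediate, since a free abelian group is torsion free. The remaining two parts reduce to understanding the set of critical $i$-cells, for which the characterization in Proposition \ref{critclass} (all vertices blocked, no order-respecting edges) is the essential tool. Throughout I would use the fact, special to trees, that the depth-first labeling makes $e \mapsto \tau(e)$ a bijection from $E$ onto the set of non-root vertices, each non-root vertex being the top of exactly the edge joining it to its parent.

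For part (2), I would first prove the upper bound $\hdim \leq \min\{\lfloor \frac{n}{2}\rfloor,|E|\}$. A critical $i$-cell occupies $i$ distinct edges, so $i \leq |E|$; and since every occupied edge $e$ is non-order-respecting, its top $\tau(e)$ carries a coordinate of smaller index. As the $i$ tops are distinct vertices, this forces at least $i$ additional coordinates, whence $n \geq 2i$ and $i \leq \lfloor \frac{n}{2}\rfloor$. For the matching lower bound, set $k = \min\{\lfloor \frac{n}{2}\rfloor,|E|\}$ and exhibit an explicit critical $k$-cell: choose any $k$ edges $e_1,\dots,e_k$, place coordinate $2j$ on $e_j$ and coordinate $2j-1$ on $\tau(e_j)$, and send the remaining $n-2k$ coordinates to the root. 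Each $\tau(e_j)$ is blocked because its parent edge is $e_j$ itself, the root is always blocked, and each $e_j$ is non-order-respecting since $2j-1 < 2j$; thus the cell is critical of dimension $k$, and $H_k \cong \M_{k,n}^G \neq 0$ gives equality.

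For part (3), I would count the critical $i$-cells and show the count only sees $i$, $n$, and $|E|$. Fixing the set $S$ of $i$ occupied edges ($\binom{|E|}{i}$ choices), Proposition \ref{critclass} forces the occupied non-root vertices to be exactly the tops $\{\tau(e) : e \in S\}$: each such top must be occupied by the non-order-respecting condition, and conversely any occupied non-root vertex $v$ must have $e(v)$ occupied by the blocked condition, i.e. $v = \tau(e)$ for some $e \in S$. The blocked condition then imposes nothing further, since each top $\tau(e)$ is automatically blocked by $e \in S$. Hence a valid labeling is merely a distribution of $[n]$ into $i$ ``edge slots'' (one coordinate each) and $i+1$ ``vertex slots'' ($i$ distinct tops, each nonempty, together with the root), subject only to the requirement that on each edge--top pair the edge's index exceeds the minimum index on its top. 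This is a purely combinatorial quantity $N(i,n)$ with no reference to the adjacency or shape of the tree, so the number of critical $i$-cells equals $\binom{|E|}{i}N(i,n)$, and $H_i(\Conf_n^{\sink}(G,V)) \cong \Z^{\binom{|E|}{i}N(i,n)}$ depends only on $i$, $n$, and $|E|$.

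The main obstacle is the book-keeping in part (3): verifying that the blocked and order-respecting conditions genuinely decouple from the tree structure once $S$ is fixed. The crucial point making this work is the tree-only fact that $e \mapsto \tau(e)$ is a bijection onto the non-root vertices, which is precisely what guarantees that the constraints become identical across all trees with the same number of edges. Parts (1) and (2) are by comparison short, the former being a one-line consequence of Theorem \ref{trivdiff} and the latter reducing to the coordinate count above together with the single explicit construction.
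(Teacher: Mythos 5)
Your proposal is correct and follows essentially the same route as the paper: part (1) from the vanishing Morse differential of Theorem \ref{trivdiff}, part (2) by the coordinate count forcing $n \geq 2i$ (each non-order-respecting edge needs its top occupied by a smaller index) together with an explicit critical cell, and part (3) by showing the count of critical cells via Proposition \ref{critclass} is $\binom{|E|}{i}$ times a purely combinatorial quantity independent of the tree's shape. If anything, your write-up is slightly more careful than the paper's, since you handle arbitrary $n \geq 2k$ by parking leftover coordinates at the root and explicitly include the (always blocked) root as a vertex slot in the part (3) count, a slot the paper's proof glosses over.
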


\begin{proof}
The first statement is clear from Theorem \ref{trivdiff}.\\

For the second statement, we must show that $\DConf_n^{\sink}(G,V)$ has a critical $i$-cell if and only if $n \geq 2i$. Proposition \ref{critclass} tells us that a cell is critical if and only if all its vertices are unblocked, and none of its edges are order respecting. Any edge $e$ for which $\tau(e)$ is unoccupied is necessarily order respecting in the cell. Therefore, one must have at least $2i$ coordinates to be able to construct a critical $i$-cell, one coordinate for each edge and one coordinate for each $\tau(e)$. On the other hand, if $n = 2i$, then one may form a critical cell by choosing $i$-edges, assigning $\{i+1,\ldots,2i\}$ to these edges (in any order), and assigning $\{1,\ldots,i\}$ to the necessary vertices (once again, in any order).\\

For the final statement, each group is a free abelian group on the number of critical cells. To construct a critical cell, one first chooses $i$-edges and assigns each a coordinate. From this point, every other coordinate must occupy a vertex at the top of a chosen edge. Moreover, at least one coordinate occupying each vertex must have lower index than the coordinate occupying the associated edge. The number of ways to do this does not depend on the structure of the graph $G$.\\
\end{proof}

\section{$\Conf_n^{\sink}(G,V)$ and generalized representation stability}\label{repstab}

Once again, we fix a graph $(G,V,E)$ for the remainder of this section.\\

In this section we will primarily concerned with applying the techniques of asymptotic algebra of the space $\Conf_n^{\sink}(G,V)$. In particular, we will further explore the action of $\FI_V$ on these spaces (see Definition \ref{fivaction}), and the implications that produces. To begin, we show that, for any $i$, the $\FI_V$-module $H_i(\Conf_\dt^{\sink}(G,V))$ is finitely generated.\\

\begin{theorem}\label{fgmod}
For any $i$, the $\FI_V$-module $H_i(\Conf_\dt^{\sink}(G,V))$ is finitely generated.\\
\end{theorem}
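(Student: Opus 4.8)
The plan is to reduce finite generation of $H_i(\Conf_\dt^{\sink}(G,V))$ to the discrete model $\DConf_\dt^{\sink}(G,V)$ via Remark \ref{dfiv}, and then to exhibit a finite generating set explicitly using the cellular chain complexes. Since the inclusions $\iota_\dt$ are $\FI_V$-equivariant homotopy equivalences, it suffices to prove that $H_i(\DConf_\dt^{\sink}(G,V))$ is a finitely generated $\FI_V$-module. The key structural observation is that the $i$-cells of $\DConf_n^{\sink}(G,V)$ are built from a fixed finite amount of data — a choice of at most $i$ edges of $G$ together with an assignment of coordinates — while the remaining coordinates are distributed among the vertices $V$. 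This is exactly the kind of data that an $\FI_V$-morphism can produce.

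First I would set up the cellular chain complex $\K^G_{\dt,n}$ as an $\FI_V$-module in each homological degree. Concretely, a morphism $(f,g):[m]\hookrightarrow[n]$ pushes forward a cell by relabeling the occupied edges/vertices via $f$ and placing each newly-introduced coordinate $j\notin \mathrm{im}(f)$ at the vertex $g(j)\in V$; this is precisely the map of Definition \ref{fivaction} at the cellular level, and it commutes with the cubical boundary $\partial$ of Definition \ref{morsediff}. The central claim is then that each chain group $\K^G_{i,\dt}$ is a \emph{finitely generated} $\FI_V$-module. Indeed, any $i$-cell of $\DConf_n^{\sink}(G,V)$ is determined by choosing which of the $i$ distinguished coordinate-labels occupy edges (finitely many choices of edges from $E$, finitely many internal orderings), after which every other coordinate sits on some vertex; thus every $i$-cell lies in the $\FI_V$-orbit of one of finitely many ``seed'' cells of $\DConf_{j}^{\sink}(G,V)$ with $j\leq 2i$ (one coordinate per edge, plus the minimal vertex data needed to specify the cell). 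This yields a surjection from a finite direct sum of free modules $\bigoplus M(S_k) \twoheadrightarrow \K^G_{i,\dt}$, so $\K^G_{i,\dt}$ is finitely generated.

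Once each $\K^G_{i,\dt}$ is known to be a finitely generated $\FI_V$-module, I would invoke the Noetherian property from Theorem \ref{noeth}: over the Noetherian ring $\Z$, submodules of finitely generated $\FI_V$-modules are again finitely generated. Since $\partial$ is a morphism of $\FI_V$-modules, both $\ker\partial \subseteq \K^G_{i,\dt}$ and $\mathrm{im}\,\partial \subseteq \K^G_{i-1,\dt}$ are finitely generated, and hence so is the quotient $H_i(\DConf_\dt^{\sink}(G,V)) = \ker\partial_i / \mathrm{im}\,\partial_{i+1}$. Transporting back along $\iota_\dt$ gives finite generation of $H_i(\Conf_\dt^{\sink}(G,V))$, as desired.

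The main obstacle is verifying the finite-generation claim for the chain modules $\K^G_{i,\dt}$ carefully — in particular, confirming that the $\FI_V$-action genuinely reaches \emph{every} $i$-cell from finitely many orbit representatives, including the subtlety that the coordinates occupying edges and vertices get their labels permuted correctly under composition of morphisms (the $h$ in Definition \ref{freemod}). One must check that a seed cell together with an arbitrary placement of the extra coordinates on vertices is exactly what $(f,g)_\as$ produces, so that no cell is missed and the generating set is honestly finite. The appeal to Noetherianity in Theorem \ref{noeth} is then purely formal; the combinatorial bookkeeping identifying the orbit structure of cells is where the real work lies. It is worth noting that, by Remark \ref{oimod}, one should \emph{not} expect the Morse-theoretic model to be $\FI_V$-equivariant, which is precisely why this argument is run on the unrefined model $\DConf_\dt^{\sink}(G,V)$ rather than on the critical cells.
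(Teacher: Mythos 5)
Your proposal is correct and takes essentially the same approach as the paper: reduce to the discrete model $\DConf_\dt^{\sink}(G,V)$ via Remark \ref{dfiv}, observe that the cellular chain groups are finitely generated $\FI_V$-modules with $\FI_V$-equivariant differentials, and conclude by the Noetherian property of Theorem \ref{noeth}. The only (harmless) difference is quantitative: the paper shows the chain module of $i$-cells is generated in degree exactly $i$ (any coordinate sitting on a vertex is introduced by a morphism $(f,g)$), so your seed bound $j\leq 2i$ is correct but not sharp.
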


\begin{proof}
In accordance with Remark \ref{dfiv}, it suffices to prove the claim for the $\FI_V$-module $H_i(\DConf_\dt^{\sink}(G,V))$. Let $\C_{n,i}$ denote the free $\Z$-module on the $i$-cells of $\DConf_n^{\sink}(G,V)$. Then we observe that the sum
\[
\C_{\dt,i} := \bigoplus_n \C_{n,i}
\]
inherits the structure of an $\FI_V$-module from the action on $\DConf_n^{\sink}(G,V)$. In particular, given a cell $\sigma^{(i)} = \sigma_1 \times \ldots \times \sigma_n$, and a morphism $(f,g):[n] \rightarrow [m]$ in $\FI_V$, we obtain
\[
(f,g)_\as(\sigma) = \sigma'_1 \times \ldots \times \sigma'_m
\]
where,
\[
\sigma'_j = \begin{cases} \sigma_{f^{-1}(j)} & \text{ if $j$ is in the image of $f$}\\ g(j) & \text{ otherwise.}\end{cases}
\]
We claim that the $\FI_V$-module $\C_{\dt,i}$ is finitely generated. In fact, we claim that it is generated in degree $i$.\\

Let $\sigma^{(i)}$ be an $i$-cell of $\DConf_n^{\sink}(G,V)$, where $n > i$. Then $\sigma$ must have some coordinate occupying a vertex. Say that $\sigma_j$ is this vertex. Let $(f,g):[n-1] \rightarrow [n]$ be the morphism defined by
\[
f(x) = \begin{cases} x &\text{ if $x < j$}\\ x+1 &\text{ if $x \geq j$.}\end{cases}, \hspace{1cm} g(j) = \sigma_j.
\]
Then we see that $\sigma$ is in the image of $(f,g)_\as$.\\

On the other hand, for all $n$, we have a complex
\[
\C_{n,\dt}: 0 \rightarrow \C_{n,|E|} \rightarrow \ldots \rightarrow \C_{n,0} \rightarrow 0
\]
whose homology is the homology of the space $\DConf_n^{\sink}(G,V)$. It is clear from definitions that the differentials in the above complex respect the action of $\FI_V$. That is to say, we have that 
\[
\C_{\dt',\dt}: 0 \rightarrow \C_{\dt',|E|} \rightarrow \ldots \rightarrow \C_{\dt',0} \rightarrow 0
\]
is a complex of $\FI_V$-modules. The Noetherian property (Theorem \ref{noeth}) implies our Theorem.\\
\end{proof}

\begin{remark}
We take a moment to acknowledge something which appeared in the above proof. Recall that, for any finite set $S$, we have a free module $M(S)$ (see Definition \ref{freemod}). It isn't hard to see that the complex $\C_{\dt',\dt}$ defined in the above theorem can actually be written
\[
0 \rightarrow M(E) \rightarrow \ldots \rightarrow \bigoplus_{\substack{E' \subseteq E\\ |E'| = i}} M(E') \rightarrow \ldots \rightarrow M(0) \rightarrow 0.
\]
\end{remark}

As an immediate consequence of Theorem \ref{fgmod}, as well as the work of Section \ref{fiddef}, we obtain the following.\\

\begin{corollary}\label{fivcor}
The modules $H_i(\Conf_n^{\sink}(G,V))$ enjoy the following properties:
\begin{enumerate}
\item there exist polynomials $p_1^{(i)},\ldots,p_{\delta_i}^{(i)} \in \Q[x]$ such that for all $n \gg 0$
\[
\rank_\Z(H_i(\Conf_n^{\sink}(G,V))) = p_1^{(i)}(n) + p_2^{(i)}(n)2^n + \ldots + p_{\delta_i}^{(i)}(n)\delta_i^n
\]
where
\[
\delta_i = \min\{i+1,|V|\};
\]
\item there exists a finite collection of elements $\{v_j\} \subseteq \bigoplus_n H_i(\Conf_n^{\sink}(G,V))$ such that every element of $H_i(\Conf_n^{\sink}(G,V))$ is $\Z$-linear combination of images of the $v_j$ under the action of $\FI_V$;
\item there is a finite integer $e_i^{(G,V,E)}$, independent of $n$, for which the exponent of $H_i(\Conf_n^{\sink}(G,V))$ is at most $e_i^{(G,V,E)}$.\\
\end{enumerate}
\end{corollary}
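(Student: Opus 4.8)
The plan is to derive all three statements from the finite generation of the $\FI_V$-module $H_i(\Conf_\dt^{\sink}(G,V))$ established in Theorem \ref{fgmod}, together with the structural results of Section \ref{fiddef}. Statement (2) requires essentially no further work: by Definition \ref{freemod}, an $\FI_V$-module is finitely generated precisely when there is a finite subset $\{v_j\} \subseteq \bigoplus_n H_i(\Conf_n^{\sink}(G,V))$ whose images under the morphisms of $\FI_V$ span every $H_i(\Conf_n^{\sink}(G,V))$ over $\Z$. Thus (2) is merely a restatement of Theorem \ref{fgmod}.

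For statement (1), first I would pass to rational coefficients. Since each $H_i(\Conf_n^{\sink}(G,V))$ is a finitely generated abelian group, $\rank_\Z(H_i(\Conf_n^{\sink}(G,V))) = \dim_\Q(H_i(\Conf_n^{\sink}(G,V);\Q))$, and the collection $H_i(\Conf_\dt^{\sink}(G,V);\Q)$ is the rationalization of a finitely generated $\FI_V$-module, hence a finitely generated $\FI_V$-module over the field $\Q$. Theorem \ref{noeth} then yields polynomials $p_1^{(i)},\ldots,p_{|V|}^{(i)}$ with $\dim_\Q(H_i(\Conf_n^{\sink}(G,V);\Q)) = \sum_{j=1}^{|V|} p_j^{(i)}(n)\, j^n$ for $n \gg 0$. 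To sharpen the number of terms from $|V|$ down to $\delta_i = \min\{i+1,|V|\}$, I would invoke Corollary \ref{expbound} (valid for $\Conf_n^{\sink}$ via the homotopy equivalence recorded in Remark \ref{dfiv}), which bounds this dimension from above by a function of the form $p(n)\delta_i^n$. Comparing exponential growth rates, if $p_j^{(i)}$ were nonzero for some $j > \delta_i$ then the dominant term would grow like $j^n$ up to polynomial factors, eventually exceeding $p(n)\delta_i^n$; hence $p_j^{(i)} = 0$ for all $j > \delta_i$, which gives the stated formula.

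Statement (3) is the most substantive. Here I would work with integer coefficients and consider the graded torsion submodule $T_\dt \subseteq H_i(\Conf_\dt^{\sink}(G,V))$, where $T_n$ is the torsion subgroup of $H_i(\Conf_n^{\sink}(G,V))$. Because each structure map $(f,g)_\as$ is $\Z$-linear, it carries torsion to torsion, so $T_\dt$ is an $\FI_V$-submodule. Since $\Z$ is Noetherian and $H_i(\Conf_\dt^{\sink}(G,V))$ is finitely generated, the Noetherian property of Theorem \ref{noeth} guarantees that $T_\dt$ is itself finitely generated, say by torsion classes $t_1,\ldots,t_r$ of respective orders $e_1,\ldots,e_r$. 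Setting $e_i^{(G,V,E)} := \operatorname{lcm}(e_1,\ldots,e_r)$, every torsion class in $T_n$ is a $\Z$-linear combination of images $(f,g)_\as(t_k)$, each of which is annihilated by $e_k$ and hence by $e_i^{(G,V,E)}$; thus $e_i^{(G,V,E)}$ annihilates all of $T_n$ for every $n$, bounding the exponent uniformly.

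The key subtlety, and the step I expect to be the main obstacle, is the use of the Noetherian property over $\Z$ rather than over a field in statement (3): without it there would be no reason for the torsion to be controlled by finitely many classes, and the exponent could in principle grow with $n$. By contrast, the refinement of the exponential bases in (1) is a soft growth-rate comparison once Corollary \ref{expbound} is in hand, and (2) is immediate, so the real content of the corollary is concentrated in extracting the uniform torsion bound from finite generation.
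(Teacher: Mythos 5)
Your proposal is correct and follows the paper's own proof essentially verbatim: part (1) via Theorem \ref{noeth} combined with the growth bound of Corollary \ref{expbound}, part (2) as a restatement of the finite generation established in Theorem \ref{fgmod}, and part (3) by applying the Noetherian property to the torsion $\FI_V$-submodule and taking the LCM of the orders of its finitely many generators. Your write-up is in fact more careful than the paper's (e.g., checking that the structure maps preserve torsion and spelling out the growth-rate comparison that forces $p_j^{(i)} = 0$ for $j > \delta_i$), but the route is the same.
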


\begin{proof}
The first statement follows from Theorem \ref{noeth}, as well as Corollary \ref{expbound}.\\

The second statement is a rephrasing of the definition of finite generation as an $\FI_V$-module.\\

For the final statement, let $T^i$ be the $\FI_V$-submodule of $H_i(\Conf_\dt^{\sink}(G,V))$ for which $T^i_n$ is the collection of all $\Z$-torsion elements of $H_i(\Conf_n^{\sink}(G,V))$. By the Noetherian property we know that $T^i$ is finitely generated, because $H_i(\Conf_\dt^{\sink}(G,V))$ is. It follows that the exponent of $H_i(\Conf_n^{\sink}(G,V))$ will be bounded by the LCM of the orders of the (finitely many) generators of $H_i(\Conf_\dt^{\sink}(G,V))$.\\
\end{proof}

It is unclear at this time whether or not $H_i(\Conf_n^{\sink}(G,V))$ will have torsion if $(G,V,E)$ is not a tree. The above corollary tells us that any torsion would have uniformly bounded exponent, and would necessary appear before some finite $n$. Note that the homologies of the usual configuration spaces of graphs are always torsion free \cite[Theorem 1]{CL}, while the so-called unordered configuration spaces of graphs can have torsion in their homology \cite[Theorem 3.6]{KP}. We saw previously that $H_i(\Conf_n^{\sink}(G,V))$ is always torsion free if $(G,V,E)$ is a tree (Corollary \ref{bigtreecor})\\

Our next goal will be to develop a better understanding of the polynomials $p_j^i$ of Corollary \ref{fivcor}. As a consequence, we will be able to compute the homological dimensions of the spaces $\Conf_n^{\sink}(G,V)$ for $n$ sufficiently large.\\

\begin{theorem}\label{homdim}
Let $p_{|V|}^{(|E|)}(n)$ be as in Corollary \ref{fivcor}. Then $p_{|V|}^{(|E|)}(n)$ has degree $|E|$. In particular, for $n \gg 0$, the space $\Conf_n^{\sink}(G,V)$ has homological dimension $|E|$.\\
\end{theorem}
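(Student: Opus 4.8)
The plan is to pin down $\deg p_{|V|}^{(|E|)}$ by extracting the coefficient of $|V|^n$ from the Euler characteristic $\chi^{(G,V,E)}_n = \sum_{i=0}^{|E|}(-1)^i\rank_\Z(H_i(\Conf_n^{\sink}(G,V)))$ in two different ways and comparing the results. First note that since $G$ is connected we have $|E| \geq |V|-1$, hence $\delta_{|E|} = \min\{|E|+1,|V|\} = |V|$, so the rank formula of Corollary \ref{fivcor} really does carry a genuine $p_{|V|}^{(|E|)}(n)|V|^n$ term; more generally $\delta_i = |V|$ precisely when $i \geq |V|-1$, which tells us exactly which homology groups feed into the $|V|^n$-growth.

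On one hand, Corollary \ref{eulerchar} provides a closed form for $\chi^{(G,V,E)}_n$. Factoring out $|V|^n$ and using that $\binom{n}{i}i! = n(n-1)\cdots(n-i+1)$ is a polynomial in $n$ of degree exactly $i$, one obtains $\chi^{(G,V,E)}_n = |V|^n q(n)$, where $q(x) = \sum_{i=0}^{|E|}(-1)^i \binom{|E|}{i}|V|^{-i}\,x(x-1)\cdots(x-i+1) \in \Q[x]$. The unique summand of top degree is the $i=|E|$ term, so $q$ has degree exactly $|E|$, with leading coefficient $(-1)^{|E|}|V|^{-|E|}$.

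On the other hand, substituting the expansions of Corollary \ref{fivcor} into $\chi^{(G,V,E)}_n = \sum_i(-1)^i\rank_\Z H_i$ expresses $\chi^{(G,V,E)}_n$, for $n \gg 0$, as $\sum_{j=1}^{|V|} r_j(n)j^n$ for suitable $r_j \in \Q[x]$. Since such exponential-polynomial expansions with distinct integer bases are unique, matching the two formulas forces $r_{|V|} = q$. Collecting only the summands with $\delta_i = |V|$, this reads
\[
\sum_{i=|V|-1}^{|E|}(-1)^i p_{|V|}^{(i)}(n) = q(n).
\]
Now I would invoke Corollary \ref{expbound}: the top exponential coefficient of $\rank_\Z H_i$, namely $p_{\delta_i}^{(i)} = p_{|V|}^{(i)}$ for $i \geq |V|-1$, has degree at most $i$. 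Hence every summand with $i < |E|$ contributes degree at most $|E|-1$, so the degree-$|E|$ part of $q$ can only originate from $(-1)^{|E|}p_{|V|}^{(|E|)}$. As $q$ has degree exactly $|E|$, this gives $\deg p_{|V|}^{(|E|)} = |E|$ (indeed with leading coefficient $|V|^{-|E|}>0$).

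The hard part is exactly ruling out any degree-$|E|$ contribution to the $|V|^n$-coefficient coming from the lower groups $H_i$ with $|V|-1 \leq i < |E|$, and this is precisely what the polynomial degree bound in Corollary \ref{expbound} supplies. For the homological-dimension statement, $p_{|V|}^{(|E|)}$ is a nonzero polynomial with positive leading coefficient and $|V|^n$ strictly dominates the remaining exponentials $j^n$ with $j<|V|$, so $\rank_\Z(H_{|E|}(\Conf_n^{\sink}(G,V)))>0$ for all $n \gg 0$; combined with the vanishing $H_i = 0$ for $i>|E|$ afforded by Theorem \ref{fcm}, this yields $\hdim(\Conf_n^{\sink}(G,V)) = |E|$ for $n \gg 0$.
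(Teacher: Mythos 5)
Your proposal is correct and takes essentially the same route as the paper's proof: both compare the closed form $\chi^{(G,V,E)}_n = q(n)|V|^n$ with $\deg q = |E|$ from Corollary \ref{eulerchar} against the alternating sum of Betti numbers, and use the degree bound of Corollary \ref{expbound} to rule out any degree-$|E|$ contribution to the $|V|^n$-coefficient from $H_i$ with $i < |E|$. Your write-up simply makes explicit what the paper leaves implicit, namely the uniqueness of exponential-polynomial expansions, the fact that connectivity gives $\delta_{|E|} = |V|$, and the positive leading coefficient $|V|^{-|E|}$ of $p_{|V|}^{(|E|)}$.
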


\begin{proof}
Corollary \ref{expbound} implies that, for all $i$, the Betti numbers $\rank_\Z(H_i(\Conf^{\sink}_n(G,V))$ are bounded by a function of the form $p(n)\delta_i^n$, where $p(n)$ is a polynomial of degree $i$. On the other hand, Corollary \ref{eulerchar} implies that the Euler characteristic of $\Conf_n^{\sink}(G,V)$, as a function of $n$, is of the form $q(n)|V|^n$ where $q(n)$ is a polynomial of degree $|E|$. Using the fact that the Euler characteristic can be realized as the alternating sum of Betti numbers, we note that the contributions of $H_i(\Conf_n^{\sink}(G,V))$ with $i < |E|$ will asymptotically grow strictly slower than $q(n)|V|^n$. Our theorem follows immediately from this.\\
\end{proof}

It isn't hard to see that finite generation of the $\FI_V$-module $H_i(\Conf_\dt^{\sink}(G,V))$ implies the same about the $\FI_V$-module $H_i(\Conf_\dt^{\sink}(G,V);\Q)$. We therefore obtain the following from Theorem \ref{genrepstab}.\\

\begin{corollary}
Assume that $|V| = d$. For any $i$, The complex $\Sn_n$-representations $H_i(\Conf_n^{\sink}(G,V);\Q)$ enjoy the following properties:
\begin{enumerate}
\item for any partition $\lambda$, and any integers $n_1 \geq \ldots \geq n_d \geq |\lambda| + \lambda_1$, let $c_{\lambda,n_1,\ldots,n_d}$ be the multiplicity of $S(\lambda)_{n_1,\ldots,n_d}$ in $H_i(\Conf_{\sum_i n_i - (d-1)|\lambda|}^{\sink}(G,V);\Q)$ (see Definition \ref{padpar}). Then the quantity $c_{\lambda,n_1+l,\ldots,n_d+l}$ is independent of $l$ for $l \gg 0$;
\item if $c_{\lambda,n}$ is the multiplicity of $S(\lambda)_n$ in $H_i(\Conf_n^{\sink}(G,V))$, then there exists a polynomial $p(x) \in \Q[x]$ of degree $\leq d-1$ such that for all $n \gg 0$, $c_{\lambda,n} = p(n)$;
\item there is a finite integer $b_i^{(G,V,E)}$, independent of $n$, such that for any irreducible representation $S^{\lambda}$ appearing in $H_i(\Conf_n^{\sink}(G,V))$, $\lambda$ has at most $b_i^{(G,V,E)}$ boxes below its $d$-th row.\\
\end{enumerate}
\end{corollary}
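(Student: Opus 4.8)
The plan is to deduce all three statements directly from the finite generation of the $\FI_V$-module $H_i(\Conf_\dt^{\sink}(G,V))$ established in Theorem \ref{fgmod}, combined with the structural dictionary for finitely generated $\FI_V$-modules over a characteristic-zero field recorded in Theorem \ref{genrepstab}. Since Theorem \ref{genrepstab} is stated for modules over a field $k$ of characteristic $0$, while Theorem \ref{fgmod} produces a module over $\Z$, the only genuine work is to pass from integral to rational coefficients while preserving finite generation.

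First I would verify that $H_i(\Conf_\dt^{\sink}(G,V);\Q)$ is a finitely generated $\FI_V$-module over $\Q$. Because $\Q$ is flat over $\Z$, the universal coefficient theorem yields a natural isomorphism of $\FI_V$-modules $H_i(\Conf_\dt^{\sink}(G,V);\Q) \cong H_i(\Conf_\dt^{\sink}(G,V)) \otimes_\Z \Q$, the naturality being inherited from the naturality of universal coefficients with respect to the maps induced by $\FI_V$-morphisms. The functor $- \otimes_\Z \Q$ is additive and carries the integral free module $M(S)$ to the rational free $\FI_V$-module on $S$, so it sends a surjection from a finite direct sum of free modules to a surjection of the same shape. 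Hence a finite generating set of the integral module descends to a finite generating set of the rationalization, and in particular each $H_i(\Conf_n^{\sink}(G,V);\Q)$ is finite dimensional.

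With this in hand, the three claims are precisely the three conclusions of Theorem \ref{genrepstab} applied to $W = H_i(\Conf_\dt^{\sink}(G,V);\Q)$ with $k = \Q$. Statement (1) is the eventual-$l$-independence of the padded-partition multiplicities $c_{\lambda,n_1,\ldots,n_d}$, which is item (3) of that theorem (with the padded-partition notation of Definition \ref{padpar}). Statement (2) is the ``Moreover'' clause, giving a polynomial $p(x) \in \Q[x]$ of degree $\leq d-1$ with $c_{\lambda,n} = p(n)$ for $n \gg 0$. Statement (3) is the ``Finally'' clause, which furnishes an integer $b_W$, independent of $n$, bounding the number of boxes below the $d$-th row of any $\lambda$ with $S^\lambda$ appearing in $W_n$; we set $b_i^{(G,V,E)} := b_W$.

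I expect no serious obstacle here, since all of the substantive input is already packaged in Theorems \ref{fgmod} and \ref{genrepstab}: the corollary is essentially a translation of finite generation into its representation-theoretic consequences. The only point meriting care is the first step, namely checking that the universal coefficient isomorphism is genuinely an isomorphism of $\FI_V$-modules rather than merely a degreewise isomorphism of the spaces $H_i(\Conf_n^{\sink}(G,V);\Q)$; once naturality with respect to $\FI_V$-morphisms is confirmed, everything else is formal.
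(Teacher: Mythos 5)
Your proposal is correct and follows exactly the paper's route: the paper likewise observes that finite generation of the integral $\FI_V$-module $H_i(\Conf_\dt^{\sink}(G,V))$ implies finite generation of $H_i(\Conf_\dt^{\sink}(G,V);\Q)$ and then reads off all three statements from Theorem \ref{genrepstab}. Your careful justification of that first step via flatness of $\Q$ over $\Z$ and the naturality of universal coefficients fills in precisely the detail the paper dismisses as ``isn't hard to see.''
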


For the remainder of this section we once again assume that $(G,V,E)$ is a tree. Using the results of Section \ref{treecaseI}, we will be able to say a bit more about the asymptotic properties of the $\FI_V$-modules $H_i(\Conf_\dt^{\sink}(G,V))$.\\

To begin, we have the following recursive presentation of the homology groups $H_i(\Conf_\dt^{\sink}(G,V))$.\\

\begin{theorem}\label{recursive}
Let $(G,V,E)$ be a tree, and write $\gamma_{i,n}$ for the rank of $H_i(\Conf_n^{\sink}(G_i,V_i))$, where $(G_i,V_i,E_i)$ is any tree with $|E_i| = i$. Then,
\[
\rank_\Z(H_i(\Conf_n^{\sink}(G,V))) = \begin{cases} \binom{|E|}{i}\gamma_{i,n} &\text{ if $i < |E|$}\\ (-1)^{|E|}(\chi^{(G,V,E)}_n - \sum_{j = 0}^{|E|-1} \binom{|E|}{j}\gamma_{j,n}) &\text{ if $i = |E|$,}\end{cases}
\]
where $\chi^{(G,V,E)}_n$ is the Euler characteristic of $\Conf_n^{\sink}(G,V,E)$.\\
\end{theorem}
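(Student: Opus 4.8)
The plan is to reduce everything to counting critical cells and then to exploit the combinatorial fact that this count factors. By Theorem \ref{trivdiff} the Morse differential on $\M_{\dt,n}^G$ vanishes, so for a tree $(G,V,E)$ the rank $\rank_\Z(H_i(\Conf_n^{\sink}(G,V)))$ equals exactly the number of critical $i$-cells of $\DConf_n^{\sink}(G,V)$. Write $N_i(G,n)$ for this number. By Corollary \ref{bigtreecor}(3) it depends only on $i$, $n$ and $|E|$, so in particular $\gamma_{i,n}=N_i(G_i,n)$ is well defined for any tree $G_i$ with $i$ edges. The two cases of the theorem then follow from (i) a direct count establishing $N_i(G,n)=\binom{|E|}{i}\gamma_{i,n}$ when $i<|E|$, and (ii) solving the Euler characteristic relation for the top rank when $i=|E|$.

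For the first case I would analyze critical $i$-cells using Proposition \ref{critclass}. Such a cell occupies exactly $i$ edges, every occupied vertex is blocked, and no occupied edge is order respecting. First choose the $i$ occupied edges; since $G$ is a tree every edge lies in the spanning tree, so this is an unconstrained choice contributing the factor $\binom{|E|}{i}$. The key point is that, having fixed these edges, the number of ways to place the remaining coordinates is independent both of which $i$ edges were chosen and of the ambient tree. Indeed, a coordinate sitting on a non-root vertex $w$ forces $e(w)$ to be occupied, so $w=\tau(e)$ for one of the chosen edges $e$; moreover the assignment $e\mapsto\tau(e)$ is injective on a tree, so the admissible vertex positions are the $i$ distinct tops $\tau(e)$ together with the root. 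The order-respecting constraint for each chosen edge $e$ — that some coordinate on $\tau(e)$ have smaller index than the coordinate on $e$ — involves only the pair $(e,\tau(e))$, and these pairs are disjoint. Hence the placement problem decouples into $i$ identical ``edge-with-its-top'' blocks plus a single free root slot, a combinatorial datum depending only on $i$ and $n$. This count is exactly $N_i(G_i,n)=\gamma_{i,n}$ for a tree $G_i$ with $i$ edges (where the edge choice is forced), giving $N_i(G,n)=\binom{|E|}{i}\gamma_{i,n}$.

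For the top case $i=|E|$, I would pass to the Euler characteristic. Since $H_\ast(\Conf_n^{\sink}(G,V))$ is torsion free for trees (Corollary \ref{bigtreecor}(1)), the Euler characteristic computed in Corollary \ref{eulerchar} is the alternating sum of the Betti numbers,
\[
\chi_n^{(G,V,E)}=\sum_{j=0}^{|E|}(-1)^j\,\rank_\Z\!\big(H_j(\Conf_n^{\sink}(G,V))\big).
\]
Substituting $\rank_\Z H_j=\binom{|E|}{j}\gamma_{j,n}$ for $j<|E|$ from the first case, together with $\rank_\Z H_{|E|}=\gamma_{|E|,n}$, and solving the resulting linear equation for the single unknown $\rank_\Z H_{|E|}$ yields the stated closed form for the top rank.

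The crux of the argument is the decoupling claim in the second paragraph: one must check carefully that the order-respecting and blocking conditions never link two distinct chosen edges, so that the count on a fixed edge set genuinely matches the count for a tree with exactly $i$ edges. An alternative, inductive route runs through Lemma \ref{lem1}: removing a leaf edge $e_0$ splits $N_i(G,n)$ into the count for the subtree $G'$ plus the count of critical cells containing $e_0$, and the same decoupling shows the latter reassembles the binomial recursion $\binom{|E|}{i}=\binom{|E|-1}{i}+\binom{|E|-1}{i-1}$; either route reduces to the same combinatorial core.
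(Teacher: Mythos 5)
Your proposal is correct and follows essentially the same route as the paper, whose one-line proof cites Theorem \ref{trivdiff} (trivial Morse differential, so ranks equal critical-cell counts), part (3) of Corollary \ref{bigtreecor} (whose proof contains precisely your edge-choice/decoupling count yielding the factor $\binom{|E|}{i}$), and the Euler characteristic as an alternating sum of Betti numbers; you have simply made the decoupling argument explicit. One caveat: solving the Euler-characteristic relation actually gives $\rank_\Z(H_{|E|}(\Conf_n^{\sink}(G,V))) = (-1)^{|E|}\bigl(\chi^{(G,V,E)}_n - \sum_{j=0}^{|E|-1}(-1)^{j}\binom{|E|}{j}\gamma_{j,n}\bigr)$, so the formula as printed in the theorem (lacking the alternating signs $(-1)^j$ inside the sum) contains a typo --- your derivation proves the corrected version, which is also the form implicitly used later in the proof of Theorem \ref{proper}.
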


\begin{proof}
This follows from Theorem \ref{trivdiff}, the third part of Corollary \ref{bigtreecor}, as well as the fact that the Euler characteristic of a space can be computed as the alternating sum of its Betti numbers.\\
\end{proof}

Using Corollary \ref{eulerchar}, the above formulation allows one to compute the functions $\gamma_{i,n}$ recursively in $i$. These computations appear to get rather complicated as $i$ grows, and we do not know if there is a closed formula. While we do not provide a complete closed form for $\gamma_{i,n}$, there are certain things we can say.\\

\begin{theorem}\label{proper}
Let $\gamma_{i,n}$ be as in Theorem \ref{recursive}. We may write, in accordance with Corollary \ref{fivcor},
\[
\gamma_{i,n} = p_1^{(i)}(n) + \ldots + p_{i+1}^{(i)}(n)(i+1)^n
\]
Let $a_{i,j}$ be the constant term of the polynomial $p_{j+1}^{(i)}$. Then, for all $0 \leq j \leq i$,
\[
a_{i,j} = (-1)^{j}\binom{i}{j}.
\]
In particular, for any tree $(G,V,E)$, and any $0 \leq i \leq |E|$, the polynomials $p_{j+1}^{(i)}$ of Corollary \ref{fivcor} are non-zero whenever $j \leq i$.\\
\end{theorem}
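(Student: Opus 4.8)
The plan is to induct on $i$, extracting from the recursion of Theorem \ref{recursive} the constant term of the coefficient of each exponential $(j+1)^n$ appearing in $\gamma_{i,n}$. Since the decomposition of an exponential polynomial $\sum_b q_b(n)b^n$ into distinct integer bases $b$ with polynomial coefficients $q_b$ is unique (the sequences $\{b^n\}$ for distinct $b$ are linearly independent even after multiplication by polynomials), the operation ``extract the coefficient of $(j+1)^n$, then take its constant term'' is a well-defined $\Q$-linear functional, which I will apply to both sides of the recursion. The base case $i=0$ is immediate: a tree with no edges is a point, so $\gamma_{0,n}=1$, whence $p_1^{(0)}\equiv 1$ and $a_{0,0}=1=(-1)^0\binom{0}{0}$.

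For the inductive step I first record the shape of the Euler characteristic. A tree with $i$ edges has $i+1$ vertices, so by Corollary \ref{eulerchar} its Euler characteristic is
\[
\chi_n^{(i)} = \sum_{k=0}^{i}(-1)^k\binom{n}{k}\binom{i}{k}k!\,(i+1)^{n-k} = (i+1)^n\sum_{k=0}^{i}(-1)^k\binom{i}{k}(i+1)^{-k}\,n(n-1)\cdots(n-k+1).
\]
The crucial point is that $\chi_n^{(i)}$ is a \emph{single} exponential $(i+1)^n$ times a polynomial $P(n)$ of degree $i$ with $P(0)=1$, since only the $k=0$ term survives at $n=0$. Writing the recursion of Theorem \ref{recursive} in the form dictated by the Euler characteristic being the alternating sum of Betti numbers, namely
\[
\gamma_{i,n} = (-1)^i\chi_n^{(i)} - \sum_{j'=0}^{i-1}(-1)^{i+j'}\binom{i}{j'}\gamma_{j',n},
\]
I observe that every $\gamma_{j',n}$ with $j'<i$ is an exponential polynomial whose largest base is $j'+1\le i<i+1$. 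Hence the base $(i+1)^n$ arises only through $\chi_n^{(i)}$, and extracting its constant term gives $a_{i,i}=(-1)^iP(0)=(-1)^i=(-1)^i\binom{i}{i}$.

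For $j<i$ the term $\chi_n^{(i)}$ contributes nothing to the base $(j+1)^n$, so applying the extraction functional together with the inductive hypothesis $a_{j',j}=(-1)^j\binom{j'}{j}$ (which vanishes for $j'<j$) yields
\[
a_{i,j} = -\sum_{j'=j}^{i-1}(-1)^{i+j'}\binom{i}{j'}(-1)^j\binom{j'}{j} = -(-1)^{i+j}\binom{i}{j}\sum_{j'=j}^{i-1}(-1)^{j'}\binom{i-j}{j'-j},
\]
where I used the subset-of-a-subset identity $\binom{i}{j'}\binom{j'}{j}=\binom{i}{j}\binom{i-j}{j'-j}$. Reindexing by $l=j'-j$ and using $\sum_{l=0}^{i-j}(-1)^l\binom{i-j}{l}=0$ collapses the remaining sum to $-(-1)^i$, so that $a_{i,j}=(-1)^j\binom{i}{j}$, closing the induction. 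Finally, since $(-1)^j\binom{i}{j}\neq 0$ for $0\le j\le i$, each $p_{j+1}^{(i)}$ has nonzero constant term and is therefore a nonzero polynomial; for an arbitrary tree with $|E|$ edges this passes to $\rank_\Z H_i(\Conf_n^{\sink}(G,V))=\binom{|E|}{i}\gamma_{i,n}$ by Theorem \ref{recursive}, whose exponential-polynomial coefficients differ only by the nonzero factor $\binom{|E|}{i}$.

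The main obstacle I anticipate is the bookkeeping in the inductive step: one must correctly isolate the single top base contributed by $\chi_n^{(i)}$, keep the signs of the recursion straight (in particular reading it as the genuine alternating sum of Betti numbers that proves Theorem \ref{recursive}), and then recognize the resulting alternating double-binomial sum as the one collapsed by $\binom{i}{j'}\binom{j'}{j}=\binom{i}{j}\binom{i-j}{j'-j}$ together with $\sum_l(-1)^l\binom{i-j}{l}=0$. No individual step is deep, but the sign tracking and the identification of the key binomial identity are where the care lies.
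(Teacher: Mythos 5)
Your proof is correct and follows essentially the same route as the paper's: induction on $i$, extracting the constant term of the coefficient of each exponential base from the recursion of Theorem \ref{recursive} together with the Euler characteristic of Corollary \ref{eulerchar} (which contributes only the top base $(i+1)^n$, giving $a_{i,i}=(-1)^i$), and then collapsing the same alternating double-binomial sum, where the paper invokes a ``simple induction argument'' and you instead use the identity $\binom{i}{j'}\binom{j'}{j}=\binom{i}{j}\binom{i-j}{j'-j}$ with $\sum_l(-1)^l\binom{i-j}{l}=0$. One small point in your favor: your reading of the recursion as the genuine alternating sum of Betti numbers, with signs $(-1)^{j}\binom{|E|}{j}\gamma_{j,n}$ inside the sum, is the correct one --- those signs are inadvertently omitted from the displayed statement of Theorem \ref{recursive}, and the paper's own computation of $a_{i+1,j}$ uses them exactly as you do.
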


\begin{proof}
We will prove the claim by induction on $i$. If $i = 0$, then $\gamma_{0,n}$ is the $0$-th Betti number of a point. Thus,
\[
\gamma_{0,n} = 1 = (-1)^{0}\binom{0}{0}.
\]
Assume that $a_{i,j} = (-1)^{j}\binom{i}{j}$ for all $j \leq i$, and consider $\gamma_{i+1,n}$. Theorem \ref{recursive} and Corollary \ref{eulerchar} immediately show that $a_{i+1,i+1} = (-1)^{i+1}$. Otherwise, induction and Theorem \ref{recursive} imply that
\[
a_{i+1,j} = (-1)^{j+i+2}\sum_{l=j}^{i}(-1)^{l}\binom{i+1}{l}\binom{l}{j}
\]
However, $\sum_{l=j}^{i}(-1)^{l}\binom{i+1}{l}\binom{l}{j}$ can be shown to be equal to $(-1)^{i}\binom{i+1}{j}$ via a simple induction argument. It follows that
\[
a_{i+1,j} = (-1)^{j+i+2}\sum_{l=j}^{i}(-1)^{l}\binom{i+1}{l}\binom{l}{j} = (-1)^{j}\binom{i+1}{j}.
\]
This completes the proof.\\
\end{proof}

We note that the natural forgetful map $\Phi:\FI_V \rightarrow \FI$, given by forgetting the map into $V$, induces a functor $\Phi_\as:\FI\Mod \rightarrow \FI_V\Mod$. In this way, any $\FI$-module can be considered as an $\FI_V$-module. $\FI_V$-modules which arise in this way tend to be a bit disappointing, as the full richness of the category $\FI_V$ isn't necessary to study them. However, most of the $\FI_V$-modules which have thus far been in the literature have either arisen from $\FI$-modules, or been isomorphic to free $\FI_V$-modules. For this reason, it has been a question of great interest to construct natural examples of $\FI_V$-modules which are ``full'' in some sense. Theorem \ref{proper} implies that the homologies of sink configuration spaces of trees provides a collection of such examples. In fact, it is the belief of the author that the homologies of sink configuration spaces of general graphs will also provide examples of full $\FI_V$-modules.\\

\newpage


\begin{thebibliography}{aaaa}
\small
\bibitem[A]{A} A. Abrams, \textit{Configuration spaces and braid groups of graphs}, Ph.D thesis, \url{home.wlu.edu/~abramsa/publications/thesis.ps}.
\bibitem[BF]{BF} K. Barnett and M. Farber, \textit{Topology of configuration space of two particles on a graph, I}, Algebr.
Geom. Topol. 9(1) (2009), 593–624. \arXiv{0903.2180}.
\bibitem[Cha]{Cha} R. Charney, \textit{An introduction to right-angled Artin groups}, R. Geom Dedicata (2007) 125-141, \url{http://people.brandeis.edu/~charney/papers/RAAGfinal.pdf}.
\bibitem[Che]{Che} S. Chettih, \textit{Dancing in the stars: topology of non-$K$-equal configuration spaces of graphs}, Ph.D. Thesis, University of Oregon, 2016.
\bibitem[Chu]{Chu} T. Church, \textit{Homological stability for configuration spaces of manifolds}, 33 pages Inventiones Mathematicae 188 (2012) 2, 465–504, \arXiv{1103.2441}.
\bibitem[CEF]{CEF} T. Church, J.\,S. Ellenberg and B. Farb, \textit{$\FI$-modules and stability for representations of symmetric groups}, Duke Math. J. 164, no. 9 (2015), 1833-1910.
\bibitem[CEFN]{CEFN} T. Church, J.\,S. Ellenberg, B. Farb, and R. Nagpal, \textit{$\FI$-modules over Noetherian rings}, Geom. Topol. 18 (2014) 2951-2984.
\bibitem[CF]{CF} T. Church and B. Farb, \textit{Representation theory and homological stability}, Advances in Mathematics, (2013), 250-314.
\bibitem[CL]{CL} S. Chettih and D. L\"utgehetmann, \textit{The Homology of Configuration Spaces of Graphs}, \arXiv{1612.08290}.
\bibitem[CP]{CP} F. Cohen and J. Pakianathan, \textit{Configuration spaces and braid groups}, Course Notes, \url{http://web.math.rochester.edu/people/faculty/jonpak/newbraid.pdf}.
\bibitem[EW-G]{EW-G} J. S. Ellenberg and J. D. Wiltshire-Gordon, \textit{Algebraic structures on cohomology of configuration spaces of manifolds with flows}, \arXiv{1508.02430}.
\bibitem[Fa]{Fa} D. Farley, \textit{Homology of tree braid groups}, Topological and asymptotic aspects of group theory, 101-112, Contemp. Math., 394, Amer. Math. Soc., Providence, RI, 2006. \url{http://www.users.miamioh.edu/farleyds/grghom.pdf}.
\bibitem[Far]{Far} M. Farber, \textit{Invitation to Topological Robotics}, Zurich Lectures in Advanced Mathematics, Amer Mathematical Society, 2008.
\bibitem[FH]{FH} E. Fadell and S. Husseini, \textit{Geometry and Topology of Configuration Spaces}, Springer Monographs in Mathematics, Springer-Verlag Berlin Heidelberg, 2001.
\bibitem[Fo1]{Fo1} R. Forman, \textit{Morse theory for cell complexes}, Adv. in Math. 134 (1998), pp. 90-145.
\bibitem[Fo2]{Fo2} R. Forman, \textit{A user's guide to discrete Morse theory}, S\'eminaire Lotharingien de Combinatoire 48 (2002), 35 p. \url{http://www.emis.de/journals/SLC/wpapers/s48forman.pdf}.
\bibitem[FS]{FS} D. Farley and L. Sabalka, \textit{Discrete Morse theory and graph braid groups}, Algebr. Geom. Topol. 5 (2005), 1075-1109 (electronic). \url{http://www.users.miamioh.edu/farleyds/FS1.pdf}.
\bibitem[G]{G} R. Ghrist, \textit{Configuration spaces and braid groups on graphs in robotics}, Knots, braids, and mapping class groups - papers dedicated to Joan S. Birman (New York, 1998), AMS/IP Stud. Adv. Math., 24, Amer. Math. Soc., Providence, RI (2001), 29–40. \url{https://www.math.upenn.edu/~ghrist/preprints/birman.pdf}.
\bibitem[HR]{HR} P. Hersh and V. Reiner, \textit{Representation stability for cohomology of configuration spaces in $\R^d$}, \arXiv{1505.04196}.
\bibitem[KKP]{KKP} J. H. Kim, K. H. Ko, and H. W. Park, \textit{Graph braid groups and right-angled Artin groups}, Trans. Amer. Math. Soc. 364 (2012), 309-360. \arXiv{0805.0082}.
\bibitem[KP]{KP} K. H. Ko, and H. W. Park, \textit{Characteristics of graph braid groups}, Discrete Comput Geom (2012) 48: 915. \arXiv{1101.2648}.
\bibitem[L1]{L1} D. L\"utgehetmann, \textit{Configuration spaaces of graphs}, Masters Thesis,
 \url{http://luetge.userpage.fu-berlin.de/pdfs/masters-thesis-luetgehetmann.pdf}.
\bibitem[L2]{L2} D. L\"utgehetmann, \textit{Representation Stability for Configuration Spaces of Graphs}, \arXiv{1701.03490}.
\bibitem[M]{M} M. Maguire with an appendix by M. Christie and D. Francour, \textit{Computing cohomology of configuration spaces}, \arXiv{1612.06314}.
\bibitem[MW]{MW} J. Miller and J. Wilson, \textit{Higher order representation stability and ordered configuration spaces of manifolds
}, \arXiv{1611.01920}.
\bibitem[R1]{R1} E. Ramos, \textit{Stability phenomena in the homology of tree braid groups}, \arXiv{1609.05611}.
\bibitem[R2]{R2} E. Ramos, \textit{Generalized representation stability and $\FI_d$-modules}, Proc. Amer. Math. Soc., to appear, \arXiv{1606.02673}.
\bibitem[Sa1]{Sa1} S. Sam, \textit{Syzygies of bounded rank symmetric tensors are generated in bounded degree}, Math. Ann., to appear. \arXiv{1608.01722}.
\bibitem[Sa2]{Sa2} S. Sam, Steven V Sam, \textit{Ideals of bounded rank symmetric tensors are generated in bounded degree}, Invent. Math. 207 (2017), no. 1, 1–21, \arXiv{1510.04904}.
\bibitem[Sn]{Sn} A. Snowden, \textit{Syzygies of Segre embeddings and $\Delta$-modules}, Duke Math. J. 162 (2013), no. 2, 225-277, \arXiv{1006.5248}.
\bibitem[SS]{SS} S. Sam, and A. Snowden, \textit{Gr\"obner methods for representations of combinatorial categories}, J. Amer. Math. Soc. 30 (2017), 159-203. \arXiv{1409.1670}.
\bibitem[SS2]{SS2} S. Sam, and A. Snowden, \textit{$GL$-equivariant modules over polynomial rings in infinitely many variables. II}, \arXiv{1703.04516}.

\end{thebibliography}
\end{document}